\theoremstyle{plain}
\newtheorem{theorem}{Theorem}[section]
\newtheorem{corollary}[theorem]{Corollary}
\newtheorem{lemma}[theorem]{Lemma}
\newtheorem{proposition}[theorem]{Proposition}
\theoremstyle{remark}
\newtheorem{remark}{Remark}
\theoremstyle{definition}
\newtheorem{assumption}{Assumption}
\DeclareMathOperator{\grad}{\nabla}
\DeclareMathOperator{\ddiv}{\nabla\cdot}
\newcommand{\bfu}{\textbf{\textit{u}}}
\newcommand{\bfv}{\textbf{{v}}}
\newcommand{\bfw}{\textbf{{w}}}
\newcommand{\bfx}{\textbf{{x}}}
\newcommand{\bfz}{\textbf{{z}}}
\newcommand{\bfphi}{\boldsymbol{\phi}}
\newcommand{\bfnu}{\boldsymbol{\nu}}
\newcommand{\Hdiv}{H({\rm{}div};\Omega_R)}
\newcommand{\jmp}[1]{\lsem #1 \rsem }   
\newcommand{\avg}[1]{\left\{\!\!\left\{#1\right\}\!\!\right\}}
\newcommand{\pmr}[1]{\textcolor{black}{#1}}
\begin{document}

\title{A Trefftz Discontinuous Galerkin Method for Time Harmonic Waves with a Generalized Impedance Boundary Condition}

\author{Shelvean Kapita$^{\rm a}$$^{\ast}$\thanks{$^\ast$Corresponding
author. Current address: Department of Mathematics, University of Georgia, 220 DW Brooks Dr, Athens, GA 30602. Email: shelvean.kapita@uga.edu \vspace{6pt}}, Peter Monk$^{\rm b}$ and Virginia Selgas$^{\rm c}$\\\vspace{6pt} $^{a}${\em{Institute for Mathematics and its Applications, College of Science and Engineering, 306 Lind Hall,
Minneapolis, MN USA 55455}}; $^{b}${\em{Department of Mathematical Sciences, University of Delaware, Newark DE 19716, USA}}; $^{c}${\em{Departamento de Matem\'{a}ticas, Universidad de Oviedo,
EPIG, 33203 Gij\'{o}n, Spain}}}

\maketitle

\begin{abstract}
We show how a Trefftz Discontinuous Galerkin (TDG) method for the
displacement form of the Helmholtz equation can be used to approximate
problems having a generalized impedance boundary condition (GIBC)
involving surface derivatives of the solution.  Such boundary
conditions arise naturally when modeling scattering from a scatterer
with a thin coating.  The thin coating can then be approximated by a
GIBC.  A second place GIBCs arise is as higher order absorbing
boundary conditions.  This paper also covers both cases.  Because the
TDG scheme has discontinuous elements, we propose to couple it to a
surface discretization of the GIBC using continuous finite elements.
We prove convergence of the resulting scheme and demonstrate it with
two numerical examples.

\begin{keywords} Helmholtz equation, Trefftz,  Discontinuous Galerkin,  Generalized Impedance Boundary Condition, Error estimate, Artificial Boundary Condition
\end{keywords}

\begin{classcode}\end{classcode}
\end{abstract}


\section{Introduction}
\pmr{The Trefftz method, in which a linear combination of simple solutions of the underlying partial differential equation 
on the whole solution domain are
used to approximate the solution of the desired problem, dates back to the 1926 paper of Trefftz~\cite{Trefftz}.  A  historical discussion in relation to Ritz and Galerkin methods can be found in ~\cite{Gander}.  From our point of view, a key paper in this area is that of Cessenat and D\'espres~\cite{cessenat03}
who analyzed the use of a local Trefftz space on a finite element grid to approximate the solution of the Helmholtz equation~\cite{cessenat03}.  This was later shown to be a special case of the Trefftz Discontinuous Galerkin (TDG) method~\cite{buf07,git07} which opened the
way for a more general error analysis. For more recent work in which boundary integral operators are used to contruct the Trefftz space, see for example \cite{bar,hof}.  The aforementioned work all concerns the standard pressure field formulation of acoustics
which results in a scalar Helmholtz equation. Indeed, TDG methods are well developed for
the Helmholtz, Maxwell and Navier equations with standard boundary
conditions and a recent survey can be found in~\cite{tsurvey}. For the displacement form, a TDG method has been proposed by
Gabard~\cite{gabard} also using simple boundary conditions.
}

\pmr{Because of the unusual boundary conditions considered in this paper, we propose to use the displacement form of the Trefftz Discontinuous Galerkin (TDG) method for
approximating  solutions of the Helmholtz equation
governing scattering of an acoustic wave (or suitably polarized
electromagnetic wave) by a bounded object.  This is because the scatterer is assumed
to be modeled by a Generalized Impedance Boundary condition (GIBC).
These boundary conditions arise as approximate asymptotic models of
thin coatings or gratings
(\cite{EngquistNedelec,BCH,BCHprevious,Vernhet}). 
Importantly, they also arise as approximate absorbing boundary conditions (ABCs) and our paper shows how to
handle these boundary conditions.  As
  far as we are aware, the  displacement TDG method has not been  analyzed to date.  We provide such an analysis and this is a one of the 
  contributions of our paper.}

In order to define the problem under consideration more precisely, let
$D\subset \mathbb{R}^2$ denote the region occupied by the
scatterer.  We assume that $D$ is
an open bounded domain with connected complement having a smooth
boundary $\Gamma =\partial D$. Then we can define $\nabla _{\Gamma} $
to be the surface gradient and $\nabla_{\Gamma}\cdot$ to be the
surface divergence on $\Gamma$ (see for example
\cite{ColtonKress}). In addition $\bfnu$ denotes the outward unit
normal on $\Gamma$.

Let $k\in\mathbb{R}$, $k\neq 0$, denote the wave number of the field,
and suppose that a given incident field $u^i$ impinges on the
scatterer. We want to approximate the scattered field $u\in
H^1_{\rm{}loc}(\mathbb{R}^2\setminus\overline{D})$ that is the
solution of
\begin{equation}\label{eq-fwdprob}
\begin{array}{rcl}
\displaystyle\Delta u+k^2u&=&0 \quad \mbox{ in }\Omega=\mathbb{R}^2\setminus \overline{D} \, ,\\[2ex]
\displaystyle\nabla_{\Gamma } \cdot \left(\beta \nabla_{\Gamma } u\right)+\frac{\partial u}{\partial \boldsymbol{ \nu } } +\lambda u
&=&-g \quad \mbox{ on }\Gamma :=\partial D \, ,\\[2ex]
\displaystyle\lim _{r:=|\bfx|\to\infty} \sqrt{r} \big( \frac{\partial u}{\partial r}-iku\big) &=& 0 \, .
\end{array} 
\end{equation}
The last equation,  the Sommerfeld radiation condition (SRC), holds uniformly in $\hat{\bfx}=\bfx/r$. In addition
\[
g=\nabla_{\Gamma } \cdot \left(\beta \nabla_{\Gamma } u^i \right)+\frac{\partial u^i}{\partial \bfnu }+\lambda u^i \, ,
\]
where $u^i$ is the given incident field and is assumed
to be a smooth solution of the Helmholtz equation $\Delta
u^i+k^2u^i=0$ in a neighborhood of $D$.  For example, if $u^i$ is a
plane wave then $u^{i}=\exp(ik\mathbf{x}\cdot\mathbf{d})$, where
$\mathbf{d}$ is the direction of propagation of the plane wave
and $\vert \mathbf{d}\vert=1$.  Alternatively $u^i$
could be the field due to a point source in $\mathbb{R}^2\setminus
\overline{D}$.  The coefficient functions $\beta$ and $\lambda$ are
used to model the thin coating on $\Gamma$ and we shall give details
of the assumptions on these coefficients in the next section.

As can be seen from the second equation in (\ref{eq-fwdprob}), the
GIBC involves a non-homogeneous second order partial differential
equation on the boundary of the scatterer, and this complicates the
implementation using a TDG method which uses discontinuous local
solutions of the homogeneous equation element by
element.  In addition, because the problem is posed on an infinite
domain we need to truncate the domain to apply the TDG method, and
then apply a suitable artificial boundary condition (ABC) on the outer
boundary.  \pmr{Because TDG methods have discontinuous basis functions, when the GIBC or ABC involve derivatives, these boundary conditions} can be applied more easily if we convert
them to displacement based
equations, so we propose to solve (\ref{eq-fwdprob})
by converting it to a vector problem.  To this end, we introduce
$\bfv=\nabla u$, in which case $\nabla\cdot\bfv=\Delta u=-k^2u$.
Using this relationship we see that $\bfv$ should satisfy
\begin{equation}\label{vbasic}
\begin{array}{rcl}
\nabla \nabla\cdot\bfv+k^2\bfv&=&0\mbox{ in }\mathbb{R}^2\setminus\overline{D}\, ,\\[2ex]
\displaystyle\nabla_{\Gamma } \cdot \left(\beta \nabla_{\Gamma } (\nabla\cdot\bfv)\right)-{k^2}\bfv\cdot\boldsymbol{\nu } +\lambda \nabla\cdot\bfv
&=&\displaystyle{k^2}g \quad \mbox{ on }\Gamma :=\partial D \, ,\\[2ex]
\displaystyle\lim _{r:=|\bfx|\to\infty} \sqrt{r} \big( \nabla\cdot\bfv-ik\bfv\cdot\hat{\mathbf{x}}\big) &=& 0  \, ,
\end{array} 
\end{equation}
where the radiation condition (last equation) holds uniformly for  all in directions $\hat{\mathbf{x}}:=\mathbf{x}/\vert\mathbf{x}\vert$.

The use of the displacement variable for the Helmholtz equation with standard boundary conditions in the context of plane wave methods was considered by Gabard in \cite{gabard}, but no error estimates were proved. In particular he used the PUFEM \cite{PUFEM} and DEM \cite{DEM} approaches, not TDG.
The use of the displacement vector as the primary variable is often necessary in studies of fluid-structure interaction (see e.g. \cite{wang}). To date, no error estimates have been proved for the displacement based formulation with or without the GIBC.  The vector formulation is useful in its own right.  For example, using finite element methods, Brenner et al. \cite{brenner} show that a vector
formulation can also be advantageous for sign changing materials, although we do not consider that problem here.

Our approach to discretizing (\ref{vbasic}) is to use TDG in a bounded
subdomain of $\mathbb{R}^2\setminus \overline{D}$, and standard finite
elements or trigonometric polynomial based methods to discretize the
GIBC on the boundary.  The domain is truncated using the
Neumann-to-Dirichlet (NtD) map on an artificial boundary that is taken
to be a circle.  Other truncation conditions could be used.  Since it
is not the focus of the paper, we assume for simplicity that the NtD
map is computed exactly.  The discretization of the NtD map could be
analyzed using the techniques from \cite{shelvean_phd,shelvean_paper},
and it is also possible to use an integral equation approach to
approximate the NtD on a more general artificial boundary but this
remains to be analyzed.

Our analysis of the discrete problem follows the pattern of the
analysis of finite element methods for approximating the standard
problem of scattering by an impenetrable scatterer using the
Dirichlet-to-Neumann boundary condition from \cite{koyama}. We first
show that the GIBC can be discretized leaving the displacement
equation continuous.  Then we show that this semi-discrete problem can
also be discretized successfully.  The analysis of the error in the
TDG part of the problem is motivated by the analysis of TDG for
Maxwell's equations in \cite{HMP_Maxwell} and uses the Helmholtz
decomposition of the vector field satisfying (\ref{vbasic}) as a
critical tool.

The contributions of this paper are 1) a first application and
analysis of TDG to the displacement Helmholtz problem; 2) a method for
incorporating a discretization of the GIBC into the TDG scheme using
novel numerical fluxes from \cite{shelvean_phd}; 3) an error analysis
of the fully discrete problem (except for the NtD map as described
earlier), and the first numerical results for TDG applied to this
problem.

In the remainder of the paper we use bold font to represent vector
fields and we will work in $\mathbb{R}^2$. We utilize the usual
gradient and divergence operators (both in the domain and on the
boundary), and also a vector and scalar curl defined by
\[
{\rm\bf{} curl}\;v=\left(\!\!\begin{array}{r}
\frac{\partial v\,}{\partial x_2} \\[1ex]
-\frac{\partial v\,}{\partial x_1}
\end{array}\!\!\right)
\quad \mbox{ and } \quad
{\rm curl}\;\bfv=\frac{\partial v_2}{\partial x_1}-\frac{\partial v_1}{\partial x_2} \, ,
\]
for any $v:\mathbb{R}^2\to\mathbb{C}$ and  $\bfv:\mathbb{R}^2\to\mathbb{C}^2$. 

The paper proceeds as follows.  In the next section we formulate
problem (\ref{vbasic}) in a variational way and show it is well posed
using the theory of Buffa~\cite{Buffa2005}.  Then in Section
\ref{semi} we describe and analyze the discretization of the GIBC
using finite elements (or trigonometric basis functions).  The fully
discrete TDG scheme is described in Section \ref{trefftz} where we
also prove a basic error estimate and show well-posedness of the fully
discrete problem.  We then prove convergence in a special mesh
independent norm.  In Section~\ref{num} we provide a preliminary numerical test
of the algorithm, and in Section~\ref{concl} we draw some conclusions.

\section{Variational Formulation of the Displacement Method}
In this section we give details of our assumptions on the coefficients
in the GIBC, and formulate the displacement problem (\ref{vbasic}) in
variational setting suitable for analysis. Then we show that the
problem is well-posed.  The functions $\beta ,\lambda \in L^{\infty}
(\Gamma )$ in (\ref{eq-fwdprob}) are complex valued functions and we
assume that there exists a constant $c>0$ such that
\begin{equation}\label{hyp-E!forwardprob}
\Re(\beta)\geq c ,\, \Im(\beta)\leq 0 \mbox{ and } \Im(\lambda)\geq 0\quad\mbox{ a.e. on }\Gamma.
\end{equation}
Of key importance will be the operator $G_\Gamma:H^{-1}(\Gamma)\to
H^{1}(\Gamma)$ defined weakly as the solution operator for the
boundary condition on $\Gamma$ relating the Neumann and Dirichlet
boundary data there. More precisely, for each $\eta\in H^{-1}(\Gamma)$
we define $G_{\Gamma}\eta\in H^1(\Gamma)$ to be the solution of
\begin{equation}
\int_{\Gamma } (\beta  \,\nabla_{\Gamma  }  (G_{\Gamma\!}\eta) \cdot  \nabla_{\Gamma  } \overline{\xi }-\lambda \, G_{\Gamma\!}\eta \, \overline{\xi })\,dS 
=
\int_{\Gamma } \! \eta \,\overline{\xi}\,dS
\qquad \forall \xi\in H^1(\Gamma ) \, .
\label{Gdef}
\end{equation}
An essential assumption is the following.
\begin{assumption}\label{A}
The only solution $u\in H^1(\Gamma)$ of 
\[
\displaystyle\nabla_{\Gamma } \cdot \left(\beta \nabla_{\Gamma } u\right) +\lambda u
=0
\]
is $u=0$. 
\end{assumption}
We will show that Assumption \ref{A} together with the conditions (\ref{hyp-E!forwardprob}) ensure that the operator $G_{\Gamma}: H^{-1}(\Gamma )\to H^1(\Gamma )$ is well-defined.

\begin{remark} One possible condition under which Assumption~\ref{A} holds is, 
\[
\mbox{Either }\Im(\beta)\leq -c< 0 \mbox{ or }\Im(\lambda)\geq c>0\quad \mbox{ a.e. on a segment }\Lambda\subset D.
\]
\end{remark}

\begin{remark}
On the one hand, the assumptions in (\ref{hyp-E!forwardprob}) concerning the imaginary
parts of $\beta$ and $\lambda$ are governed by physics, since these
quantities represent absorption when our model is deduced as an
approximation of the Engquist-N\'ed\'elec condition modeling the
diffraction of a time-harmonic electromagnetic wave by a perfectly conducting  object covered by a thin dielectric layer (see
\cite{EngquistNedelec}).

On the other hand,  the hypothesis in (\ref{hyp-E!forwardprob}) on the real part of $\beta$ is technical and
ensures ellipticity (see \cite[Th.2.1]{BCHprevious}); however, this property is fulfilled in the example of a medium with a thin coating (see \cite{EngquistNedelec}).  It would also be possible to allow
$\Re(\beta)\leq -c<0$ on $\Gamma$ as might be encountered modeling
meta-materials, but a sign changing coefficient would require a more
elaborate study.

The role of these properties will be clarified in
Lemma \ref{lemma-Gok}.
\end{remark}

The assumptions on the coefficients in (\ref{hyp-E!forwardprob}) together with Assumption~\ref{A} ensure that problem (\ref{eq-fwdprob}) has a unique weak solution $u$ in the space
$V = \{ v \in H^1_{loc} (\Omega ) \, ; \,\, v |_{\Gamma  } \in H^1 (\Gamma ) \}$ (see later and \cite[Th.2.1]{BCHprevious}).


To solve (\ref{vbasic}) we first truncate the domain. \pmr{We wish to analyze the error introduced in approximating a scattering problem, concentrating on the discretization of the GIBC, so we truncate the domain using a simple analytic Neumann-to-Dirichlet map.  Obviously other more general
truncation approaches such as integral equations could be used.  Indeed, in the numerical section, we shall consider a GIBC that arises from approximating the Neumann-to-Dirichlet map (a higher order ABC).}

Let $B_R$ denote
the ball of radius $R$ centered at the origin and set
$\Omega_R=B_R\setminus \overline{D}$ be our computational domain
(i.e. the bounded domain that we will mesh for the UWVF) and $ \Sigma
_R=\partial B_R$, where the radius $R$ is taken large enough to
enclose $\overline{D}$ (see Fig.~\ref{cartoon} for a diagram
illustrating the major geometric elements of the problem). The
following Neumann-to-Dirichlet (NtD) map $N_R:H^{-1/2}(\Sigma_R)\to
H^{1/2}(\Sigma_R)$ will provide the ABC on $\Sigma_R$. In particular
let $v\in H^1_{\rm{}loc}(\mathbb{R}^2\setminus \overline{B_R})$ solve
the exterior problem
\begin{equation}\label{eq-NR}
\begin{array}{rcll}
\displaystyle\Delta v+k^2v&=&0 &\quad \mbox{ in }\mathbb{R}^2\setminus \overline{B_R} \, ,\\[2ex]
\displaystyle\frac{\partial v}{\partial r}&=&f&\quad\mbox{ on }\Sigma_R\,,\\[2ex]
\displaystyle\lim _{r:=|\bfx|\to\infty} \sqrt{r} \big( \frac{\partial v}{\partial r}-ikv\big) &=& 0 & \quad \mbox{ uniformly in direction }\hat{\mathbf{x}}=\mathbf{x}/\vert\mathbf{x}\vert \, ,
\end{array} 
\end{equation}
for some $f\in H^{-1/2}(\Sigma_R)$, then $N_R(f)=v|_{\Sigma_R}$. Let
us recall that $N_R : H^{- 1/2} ( \Sigma _R ) \to H^{1/2} ( \Sigma _R
)$ is an isomorphism since its inverse, the Dirichlet-to-Neumann
  map, is also an isomorphism~\cite{ColtonKress}. Obviously, the
solution of (\ref{eq-fwdprob}) satisfies $u|_{\Sigma_R}=N_R(\partial
u/\partial r)$ and, in consequence, using the fact that
$\nabla\cdot\bfv=-k^2u$,
\[
(\nabla\cdot\bfv )|_{\Sigma _R} =-k^2N_R(\bfv\cdot\bfnu) \, ,
\]
where we denote by $\bfnu:=\mathbf{x}/R$ the outward unit normal on $\Sigma_R$.
In the same way, for the solution of (\ref{eq-fwdprob}) we have that
\[
(\nabla\cdot\bfv ) |_\Gamma=-k^2G_\Gamma(\bfv\cdot\bfnu+g) \, .
\]

Now we can write down a weak form for the boundary value problem (\ref{vbasic}) in the usual way, multiplying the first equation in (\ref{vbasic}) by a test vector function $\mathbf{w}$ and integrating by parts:
\begin{eqnarray*}
0&=&
\int_{\Omega_R}(\nabla \nabla\cdot\bfv+k^2\bfv)\cdot \overline{\mathbf{w}}\,d\bfx \\
&=&
\int_{\Omega_R}(-\nabla\cdot\bfv\,\nabla\cdot\overline{\mathbf{w}}+k^2\bfv\cdot \overline{\mathbf{w}})\,d\bfx
 +\int_{\Sigma_R}\nabla \cdot \bfv\,\bfnu\cdot\overline{\mathbf{w}}\,dS
 -\int_{\Gamma}\nabla \cdot \bfv\,\bfnu\cdot\overline{\mathbf{w}}\,dS \, ,
\end{eqnarray*}
where the minus sign in the last term is due to the normal field $\bfnu$ pointing outward $D$.  Using the NtD map $N_R$ and the boundary solution map
$G_\Gamma$, the above equation can be rewritten as the problem of
finding $\bfv\in H({\rm{}div};\Omega_R)$ such that
\begin{equation}\label{prob-FwdProbV}
\begin{array}{rcl}
&&\displaystyle\int_{\Omega_R}\! (\frac{1}{k^2}\,\nabla\cdot\bfv\,\overline{\nabla\cdot\bfw}-\bfv\cdot\overline{\bfw})\,d\bfx 
+\int_{\Sigma _R}\!\! N_R(\bfv\cdot\bfnu)\,\overline{\bfw\cdot\bfnu}\,dS\\[2ex]
&&\displaystyle\qquad- \int_{\Gamma }\!\! G_{\Gamma  \! }(\bfv\cdot\bfnu)\,\overline{\bfw\cdot\bfnu}\,dS=
 \int_{\Gamma } \!\! G_{\Gamma  }(g) \,\overline{\bfw\cdot\bfnu}\,dS
\, ,
\end{array}
\end{equation}
for any $\bfw\in H({\rm{}div};\Omega_R)$. It will be convenient to associate with the left hand side of (\ref{prob-FwdProbV}) the sesquilinear form $a: H({\rm{}div};\Omega_R) \times H({\rm{}div};\Omega_R) \to \mathbb{C}$ defined by
\begin{equation}
  a(\bfv,\mathbf{w})
=\displaystyle\int_{\Omega_R}\! (\frac{1}{k^2}\,\nabla\cdot\bfv\,\overline{\nabla\cdot\bfw}-\bfv\cdot\overline{\bfw})\,d\bfx 
  +\int_{\Sigma _R}\!\! N_R(\bfv\cdot\bfnu)\,\overline{\bfw\cdot\bfnu}\,dS- \int_{\Gamma }\!\! G_{\Gamma  \! }(\bfv\cdot\bfnu)\,\overline{\bfw\cdot\bfnu}\,dS.\label{adef}
\end{equation}

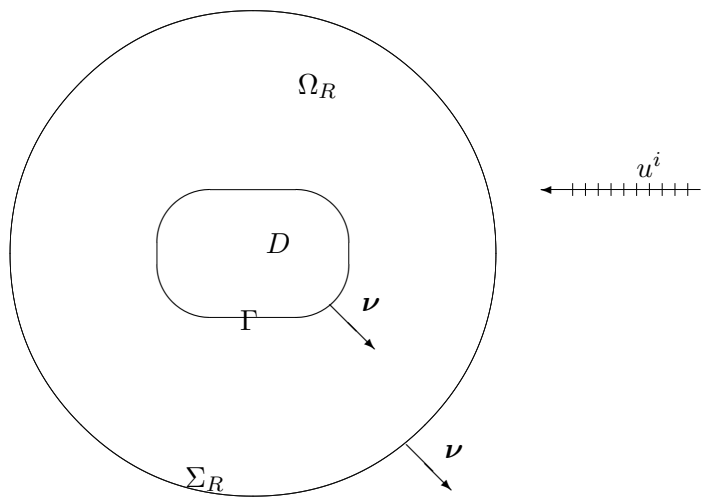
\begin{figure}
\begin{center}
\setlength{\unitlength}{17mm}
\begin{picture}(3,5)
  \linethickness{0.3pt}
  \put(-2.5,0.5){
  \qbezier(4.9,2.0)(4.9,2.787)(4.3435,3.3435)
  \qbezier(4.3435,3.3435)(3.787,3.9)(3.0,3.9)
  \qbezier(3.0,3.9)(2.213,3.9)(1.6565,3.3435)
  \qbezier(1.6565,3.3435)(1.1,2.787)(1.1,2.0)
  \qbezier(1.1,2.0)(1.1,1.213)(1.6565,0.6565)
  \qbezier(1.6565,0.6565)(2.213,0.1)(3.0,0.1)
  \qbezier(3.0,0.1)(3.787,0.1)(4.3435,0.6565)
  \qbezier(4.3435,0.6565)(4.9,1.213)(4.9,2.0)}
 \put(0.50,2.5){\oval(1.5,1.0)}
 \put(1.1,2.1){\vector(1,-1){0.35}}
 \put(1.7,1){\vector(1,-1){0.35}}
 \put(4,3){\vector(-1,0){1.25}}
 \put(3.80,2.95){\line(0,1){0.10}}
 \put(3.70,2.95){\line(0,1){0.10}}
 \put(3.60,2.95){\line(0,1){0.10}}
 \put(3.50,2.95){\line(0,1){0.10}}
 \put(3.40,2.95){\line(0,1){0.10}}
 \put(3.30,2.95){\line(0,1){0.10}}
 \put(3.20,2.95){\line(0,1){0.10}}
 \put(3.10,2.95){\line(0,1){0.10}}
 \put(3.00,2.95){\line(0,1){0.10}}
 \put(3.90,2.95){\line(0,1){0.10}}
 \put(1.350,2.05){$\boldsymbol{\nu }$}
 \put(2.0,0.9){$\boldsymbol{\nu }$}
 \put(.6,2.5){$D$}
 \put(.4,1.9){$\Gamma$}
 \put(0.85,3.75){$\Omega _R$}
 \put(-0.03,0.675){$\Sigma _R$}
 \put(3.5,3.1){$u^i$}
\end{picture}
\vspace*{-1cm}
\end{center}
\caption{A cartoon showing the geometric features of the problem.  The
  bounded scatterer $D$ is covered by a thin coating giving rise to a
  GIBC on $\Gamma$.  An incident wave $u^i$ on this scatterer causes a scattered field $u$
  in the exterior of $D$.  The artificial boundary $\Sigma_R$ is introduced to truncate the domain resulting in a bounded computational domain $\Omega_R$ and is taken to be a circle for simplicity.}
\label{cartoon}
\end{figure}

In order to prove the well-posedness of this variational formulation, we now summarize some of the properties of the NtD map $N_R$ and GIBC boundary  map $G_\Gamma$. For a given function $f\in H^{-1/2}(\Sigma_R)$ the NtD map
$N_R:H^{-1/2}(\Sigma_R)\to H^{1/2}(\Sigma_R)$ is given by
\begin{equation}
(N_Rf)(\theta)=\sum_{n=-\infty}^{\infty}\gamma_n f_n\exp(in\theta) \, ,
  \label{NRdef}
  \end{equation}
where $f_n =\frac{1}{2\pi R} \int_{\Sigma_R}f(R,\theta)\exp (-in\theta) \;d\theta $ are the Fourier coefficients of $f$ on $\Sigma_R$ and
\[
\gamma_n=\frac{1}{k}\frac{H_n^{(1)}(kR)}{(H^{(1)}_n)'(kR)}
\, .
\]
According to \cite[page 97]{ColtonCakoni} there are constants $C_1>0$ and $C_2<\infty$ such that
\[
\frac{C_1}{\sqrt{1+n^2}}\leq |\gamma_n|\leq\frac{C_2}{\sqrt{1+n^2}} \, ,
\]
for all $n\in\mathbb{Z}$. Now define $\tilde{N}_R:H^{-1/2}(\Sigma_R)\to H^{1/2}(\Sigma_R)$ by
\[
(\tilde{N}_Rf)(\theta)=-\sum_{n=-\infty}^{\infty}\frac{R}{\sqrt{1+n^2}}\, f_n\, \exp(in\theta) \, .
\] 
Clearly $\tilde{N}_R$ is negative definite and
\[
-\int_{\Sigma_R}(\tilde{N}_Rf)\, \overline{f} \,d\theta=\sum_{n=-\infty}^{\infty}\frac{2\pi R^2}{\sqrt{1+n^2}}\, |f_n|^2=2\pi R\,  \Vert f\Vert_{H^{-1/2}(\Sigma_R)}^2\, .
\]
Also from \cite[page 97]{ColtonCakoni} we can obtain the asymptotic estimate
\[
\gamma_n=\frac{R}{n}\left(1+O(\frac{1}{n}) \right) \quad\mbox{ when  } n \to\infty\, ,
\]
so that $\gamma_n-{R}/{\sqrt{1+n^2}}=O(1/n^2)$ when  $ n \to\infty$. Hence, 
\begin{equation}\label{Nrsplit}
N_R=\hat{N_R}+\tilde{N}_R\, ,
\end{equation}
where 
 $\hat{N}_R:H^{-1/2}(\Sigma_R)\to H^{3/2}(\Sigma_R)$ is well-defined and bounded, in particular $\hat{N}_R:H^{-1/2}(\Sigma_R)\to H^{1/2}(\Sigma_R)$ is compact.

We next state some properties of the NtD map which follow from the properties of the better known DtN map.
\begin{lemma}\label{lemmaNtDsign} For all $f\in H^{-1/2}(\Sigma_R)$, it holds
$$\Re \left( \int_{\Sigma_R} N_R f\;\overline{f}\,dS\right) <0  \quad\mbox{and}\quad \Im\left( \int_{\Sigma_R} N_R f\;\overline{f}\,dS\right) \leq 0 \, .$$
\end{lemma}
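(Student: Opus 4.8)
The plan is to exploit the fact that $N_R$ is diagonalized by the trigonometric basis, with multipliers $\gamma_n$, and to reduce both inequalities to a sign statement about the $\gamma_n$. Expanding $f=\sum_{n\in\mathbb{Z}}f_ne^{in\theta}$ and using (\ref{NRdef}) together with Parseval's identity, the pairing appearing in Lemma~\ref{lemmaNtDsign} becomes
\[
\int_{\Sigma_R}N_Rf\,\overline f\,dS = 2\pi R\sum_{n\in\mathbb{Z}}\gamma_n\,|f_n|^2 ,
\]
the series converging absolutely because $|\gamma_n|\le C_2/\sqrt{1+n^2}$ while $\sum_n(1+n^2)^{-1/2}|f_n|^2\le C\,\|f\|_{H^{-1/2}(\Sigma_R)}^2$. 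So it is enough to prove that every $\gamma_n$ lies in the open third quadrant of $\mathbb{C}$, i.e.\ $\Re(\gamma_n)<0$ and $\Im(\gamma_n)<0$: for nonzero $f$ at least one $f_n$ is nonzero, which forces $\Re\big(\int_{\Sigma_R}N_Rf\,\overline f\,dS\big)<0$ and $\Im\big(\int_{\Sigma_R}N_Rf\,\overline f\,dS\big)<0\le 0$, while for $f=0$ the pairing vanishes and the claimed $\Im\le0$ still holds. Throughout I take $k>0$, which is the relevant case for the radiation condition used here.

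To locate $\gamma_n$ I would pass to its reciprocal $Z_n:=\gamma_n^{-1}=k\,(H_n^{(1)})'(kR)/H_n^{(1)}(kR)$, which is the corresponding eigenvalue of the better understood Dirichlet-to-Neumann operator $N_R^{-1}$. Multiplying numerator and denominator by $\overline{H_n^{(1)}(kR)}$ and writing $H_n^{(1)}=J_n+iY_n$ with the real argument $t=kR>0$, one gets
\[
Z_n=\frac{k}{J_n(t)^2+Y_n(t)^2}\left(\tfrac12\big(J_n^2+Y_n^2\big)'(t)+i\,\big(J_n(t)Y_n'(t)-J_n'(t)Y_n(t)\big)\right).
\]
For the imaginary part I would invoke the Wronskian identity $J_n(t)Y_n'(t)-J_n'(t)Y_n(t)=2/(\pi t)$, which gives $\Im(Z_n)=2/\big(\pi R\,(J_n(t)^2+Y_n(t)^2)\big)>0$. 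For the real part the key input is the classical monotonicity property that $t\mapsto J_n(t)^2+Y_n(t)^2=|H_n^{(1)}(t)|^2$ is strictly decreasing on $(0,\infty)$ — for instance from Nicholson's representation $J_n^2(t)+Y_n^2(t)=\tfrac{8}{\pi^2}\int_0^\infty K_0(2t\sinh s)\cosh(2ns)\,ds$, differentiated under the integral using $K_0'=-K_1<0$. Hence $\big(J_n^2+Y_n^2\big)'(t)<0$, so $\Re(Z_n)<0$, and $Z_n$ lies in the open second quadrant. Therefore $\gamma_n=\overline{Z_n}/|Z_n|^2$ satisfies $\Re(\gamma_n)=\Re(Z_n)/|Z_n|^2<0$ and $\Im(\gamma_n)=-\Im(Z_n)/|Z_n|^2<0$, which is exactly what the first step needs.

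I expect the only genuine obstacle to be the strict negativity of $\Re(Z_n)$, equivalently the strict monotonicity of $|H_n^{(1)}(t)|^2$; the imaginary part and the Parseval reduction are routine. If a shorter argument is preferred, this Bessel-function estimate can be bypassed altogether: it is standard (see \cite[p.~97]{ColtonCakoni} and \cite{ColtonKress}) that the Dirichlet-to-Neumann map $N_R^{-1}$ satisfies $\Re\big(\int_{\Sigma_R}(N_R^{-1}g)\,\overline g\,dS\big)<0$ and $\Im\big(\int_{\Sigma_R}(N_R^{-1}g)\,\overline g\,dS\big)>0$ for all $g\in H^{1/2}(\Sigma_R)\setminus\{0\}$, so substituting $g=N_Rf$ (admissible since $N_R$ is an isomorphism) and taking complex conjugates gives exactly the two inequalities of Lemma~\ref{lemmaNtDsign}.
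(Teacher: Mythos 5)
Your proof is correct, and it overlaps with the paper's for one half of the statement while being genuinely more self-contained for the other half. For the imaginary part, both arguments are the same modulo bookkeeping: diagonalize $N_R$ in the Fourier basis, reduce to the sign of the multipliers, and extract the imaginary part from the Wronskian $J_n Y_n'-J_n'Y_n=2/(\pi t)$ (the paper conjugates the denominator $\overline{H_n^{(1)\prime}}$ where you conjugate $\overline{H_n^{(1)}}$ after passing to the DtN eigenvalue $Z_n=\gamma_n^{-1}$, which is immaterial). The real difference is the first inequality: the paper simply cites \cite[Lemma 3.2]{koyama}, whereas you prove $\Re(\gamma_n)<0$ directly by reducing it to the strict monotonic decrease of $t\mapsto J_n(t)^2+Y_n(t)^2$ via Nicholson's integral, which is a correct classical fact (Watson \S 13.73; note that for negative integer orders one uses $|H_{-n}^{(1)}|=|H_n^{(1)}|$ to reduce to $n\ge 0$). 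What your route buys is a uniform, self-contained treatment of both signs from a single computation of the multipliers, at the cost of importing one nontrivial Bessel-function identity; what the paper's citation buys is brevity. Your remark that strict negativity of the real part can only hold for $f\neq 0$ is a fair (minor) sharpening of the statement as written, and your closing alternative — transporting the known sign properties of the DtN map through $g=N_Rf$ and conjugation — is also valid and is essentially what the Koyama citation accomplishes for the real part.
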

\begin{proof}
The first inequality follows from \cite[Lemma 3.2]{koyama}, whereas the second is proved as follows: For any $f\in H^{-1/2}(\Sigma_R)$, we may write
\begin{eqnarray*}
\int_{\Sigma_R}N_R f\;\overline{f}\,dS 
&=& 2\pi R\sum_{n=-\infty}^\infty |f_n|^2 \frac{H_n^{(1)}(kR)\overline{H_n^{(1)'}(kR)}}{k\, |H_n^{(1)'}(kR)|^2} \, ,\\
\end{eqnarray*}
where, as above,  $$f_n = \frac{1}{2\pi R}\int_{\Sigma_R}f(R,\theta)\exp(-in\theta)\;d\theta$$ are the Fourier coefficients of $f$ on $\Sigma_R$.
Since $H_n^{(1)}(kR) = J_n(kR)+iY_n(kR)$, taking the imaginary part
\begin{eqnarray*}
 \Im \left( \int_{\Sigma_R} N_R f\;\overline{f}\,dS \right) &=& 2\pi R\sum_{n=-\infty}^\infty |f_n|^2 \, \frac{J'_n(kR) \, Y_n(kR)-J_n(kR)\, Y'_n(kR)}{k\, |H_n^{(1)'}(kR)|^2}\\
 &=& -4\sum_{n=-\infty}^\infty\frac{|f_n|^2}{k^2\, |H_n^{(1)'}(kR)|^2}\, ,
\end{eqnarray*}
by the Wronskian formula for Bessel functions (see e.g. \cite[9.1.16]{Stegun}).
\end{proof}
We note that the foregoing theory provides a direct proof that $N_R$ is an isomorphism as
a consequence of the Fredholm alternative thanks to Lemma \ref{lemmaNtDsign} and the splitting (\ref{Nrsplit}).
\begin{corollary}\label{corNtDisomorph}
The operator $N_R: H^{-1/2} (\Sigma _R) \to H^{1/2}(\Sigma _R)$ is an isomorphism.
\end{corollary}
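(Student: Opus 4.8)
The plan is to exploit the splitting (\ref{Nrsplit}), $N_R=\hat{N}_R+\tilde{N}_R$, to present $N_R$ as a compact perturbation of an isomorphism and then apply the Fredholm alternative, using Lemma~\ref{lemmaNtDsign} to exclude a nontrivial kernel. So the argument has three ingredients: (i) $\tilde{N}_R$ is an isomorphism $H^{-1/2}(\Sigma_R)\to H^{1/2}(\Sigma_R)$; (ii) $\hat{N}_R$ is compact as a map $H^{-1/2}(\Sigma_R)\to H^{1/2}(\Sigma_R)$; (iii) $N_R$ is injective.

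For (i) I would argue directly from the definition of $\tilde{N}_R$ as the Fourier multiplier $f_n\mapsto -\frac{R}{\sqrt{1+n^2}}\,f_n$: the multiplier never vanishes, and multiplication by a sequence comparable to $(1+n^2)^{-1/2}$ raises the Sobolev index by one, so $\tilde{N}_R$ is bounded with bounded inverse between $H^{-1/2}(\Sigma_R)$ and $H^{1/2}(\Sigma_R)$; its inverse is the multiplier $g_n\mapsto -\frac{\sqrt{1+n^2}}{R}\,g_n$. For (ii), since $\hat{N}_R:H^{-1/2}(\Sigma_R)\to H^{3/2}(\Sigma_R)$ is bounded and the embedding $H^{3/2}(\Sigma_R)\hookrightarrow H^{1/2}(\Sigma_R)$ is compact ($\Sigma_R$ being a compact one–dimensional manifold), $\hat{N}_R:H^{-1/2}(\Sigma_R)\to H^{1/2}(\Sigma_R)$ is compact, as already noted in the text.

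Combining these, write $N_R=\tilde{N}_R\,(I+K)$ with $K:=\tilde{N}_R^{-1}\hat{N}_R:H^{-1/2}(\Sigma_R)\to H^{-1/2}(\Sigma_R)$, which is the composition of a compact and a bounded operator, hence compact. Therefore $I+K$ is Fredholm of index zero on $H^{-1/2}(\Sigma_R)$, and since $\tilde{N}_R$ is an isomorphism, $N_R$ is a bounded Fredholm operator of index zero from $H^{-1/2}(\Sigma_R)$ to $H^{1/2}(\Sigma_R)$. It then suffices to prove injectivity. For (iii), if $N_Rf=0$ for some $f\in H^{-1/2}(\Sigma_R)$ then $\int_{\Sigma_R}N_Rf\,\overline{f}\,dS=0$, so in particular its real part vanishes; by the strict negativity in Lemma~\ref{lemmaNtDsign} this forces $f=0$. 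Hence $I+K$ is injective, therefore (by Fredholm index zero) bijective, and $N_R=\tilde{N}_R(I+K)$ is a bounded bijection; by the open mapping theorem its inverse is bounded, so $N_R$ is an isomorphism.

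I do not anticipate any serious obstacle: everything reduces to the explicit multiplier structure of $\tilde{N}_R$, the stated mapping/compactness properties of $\hat{N}_R$, and Lemma~\ref{lemmaNtDsign}. The only point needing a word of care is the degenerate case $f=0$ in the strict inequality of Lemma~\ref{lemmaNtDsign}, but this is harmless here since we only use the implication $N_Rf=0\Rightarrow f=0$.
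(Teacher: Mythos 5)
Your proof is correct and takes essentially the same approach the paper indicates for this corollary: the splitting $N_R=\hat{N}_R+\tilde{N}_R$ from (\ref{Nrsplit}) to exhibit $N_R$ as a compact perturbation of an isomorphism (hence Fredholm of index zero), combined with the strict negativity of $\Re\left(\int_{\Sigma_R}N_Rf\,\overline{f}\,dS\right)$ from Lemma~\ref{lemmaNtDsign} to rule out a nontrivial kernel. Your write-up merely makes explicit the details the paper leaves to the reader, namely the multiplier form of $\tilde{N}_R^{-1}$ and the factorization $N_R=\tilde{N}_R(I+K)$.
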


Next we show that $G_\Gamma$ is well defined.
\begin{lemma}\label{lemma-Gok}
Under Assumption \ref{A} and the conditions (\ref{hyp-E!forwardprob}), the operator $G_{\Gamma  }\! : H^{-1}(\Gamma )\to  H^{1}(\Gamma )$ defined
in (\ref{Gdef}) is an isomorphism. In particular, $G_{\Gamma  }\! : H^{-1}(\Gamma )\to H^1(\Gamma )$ is well-defined, linear and continuous.
\end{lemma}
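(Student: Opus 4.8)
The plan is to read (\ref{Gdef}) as a Fredholm problem for a sesquilinear form on $H^1(\Gamma)$ and to apply the Fredholm alternative, the coercive part coming from $\Re(\beta)\ge c$ and the injectivity from Assumption~\ref{A}. Introduce the bounded sesquilinear form
\[
b(u,\xi)=\int_{\Gamma}\big(\beta\,\nabla_{\Gamma}u\cdot\overline{\nabla_{\Gamma}\xi}-\lambda\,u\,\overline{\xi}\big)\,dS ,\qquad u,\xi\in H^1(\Gamma),
\]
and identify $H^{-1}(\Gamma)$ with the dual of $H^1(\Gamma)$ via the $L^2(\Gamma)$ pairing, so that (\ref{Gdef}) says precisely that $G_{\Gamma}\eta\in H^1(\Gamma)$ is the unique element with $b(G_{\Gamma}\eta,\xi)=\int_{\Gamma}\eta\,\overline{\xi}\,dS$ for all $\xi\in H^1(\Gamma)$, the right-hand side being understood as the $H^{-1}(\Gamma)$--$H^1(\Gamma)$ duality. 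Thus it suffices to prove that the bounded linear operator $\mathcal{A}:H^1(\Gamma)\to H^{-1}(\Gamma)$ induced by $b$ is an isomorphism; then $G_{\Gamma}=\mathcal{A}^{-1}$ is automatically linear, continuous and boundedly invertible, which is the claim.

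Next I would split off a coercive principal part. Write $b(u,\xi)=b_0(u,\xi)-\int_{\Gamma}(\lambda+1)\,u\,\overline{\xi}\,dS$, where $b_0(u,\xi)=\int_{\Gamma}\big(\beta\,\nabla_{\Gamma}u\cdot\overline{\nabla_{\Gamma}\xi}+u\,\overline{\xi}\big)\,dS$. By (\ref{hyp-E!forwardprob}) one has $\Re\,b_0(u,u)=\int_{\Gamma}\big(\Re(\beta)\,|\nabla_{\Gamma}u|^2+|u|^2\big)\,dS\ge \min(c,1)\,\|u\|_{H^1(\Gamma)}^2$, so $b_0$ is bounded and coercive on $H^1(\Gamma)$ and, by the Lax--Milgram lemma, induces an isomorphism $\mathcal{A}_0:H^1(\Gamma)\to H^{-1}(\Gamma)$. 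The remaining term defines an operator $\mathcal{K}:H^1(\Gamma)\to H^{-1}(\Gamma)$ that factors as multiplication by the bounded function $\lambda+1$ followed by the embeddings $H^1(\Gamma)\hookrightarrow L^2(\Gamma)\hookrightarrow H^{-1}(\Gamma)$; since $\Gamma$ is a smooth compact curve, $H^1(\Gamma)\hookrightarrow L^2(\Gamma)$ is compact (Rellich), hence $\mathcal{K}$ is compact. Consequently $\mathcal{A}=\mathcal{A}_0-\mathcal{K}$ is a compact perturbation of an isomorphism, so it is Fredholm of index zero.

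It then remains only to check injectivity of $\mathcal{A}$. If $\mathcal{A}u=0$ then $b(u,\xi)=0$ for every $\xi\in H^1(\Gamma)$, which is exactly the weak formulation of $\nabla_{\Gamma}\cdot(\beta\,\nabla_{\Gamma}u)+\lambda u=0$ on $\Gamma$; by Assumption~\ref{A} this forces $u=0$. Hence $\mathcal{A}$ is injective, and being Fredholm of index zero it is also surjective, thus an isomorphism, and $G_{\Gamma}=\mathcal{A}^{-1}$ is the asserted isomorphism $H^{-1}(\Gamma)\to H^1(\Gamma)$.

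I do not expect any deep obstacle; the only point requiring a little care is the compactness of the lower-order perturbation, which rests on the compact embedding $H^1(\Gamma)\hookrightarrow L^2(\Gamma)$, valid because $\Gamma$ is a smooth closed curve (a compact $1$-manifold without boundary), so Rellich--Kondrachov applies. I would also remark that for this lemma only $\Re(\beta)\ge c$ (for coercivity of $b_0$) and Assumption~\ref{A} (for injectivity) are actually used; the sign conditions on $\Im(\beta)$ and $\Im(\lambda)$ in (\ref{hyp-E!forwardprob}) will instead be invoked later, in establishing well-posedness of the full problem (\ref{prob-FwdProbV}).
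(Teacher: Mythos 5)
Your proof is correct and follows essentially the same route as the paper: a splitting of the form induced by (\ref{Gdef}) into a coercive principal part (handled by Lax--Milgram using $\Re(\beta)\ge c$) plus a compact lower-order perturbation (Rellich on $\Gamma$), followed by injectivity from Assumption~\ref{A} and the Fredholm alternative. The only cosmetic difference is the choice of zeroth-order term moved into the coercive part (the paper uses $+\beta\,\xi_1\overline{\xi}_2$, making the compact remainder $-(\lambda+\beta)$, while you use $+u\overline{\xi}$ and remainder $-(\lambda+1)$), which changes nothing of substance.
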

\begin{proof}
We start the proof by defining the bounded sesquilinear forms $a_{\Gamma }, b_{\Gamma  } :  H^1(\Gamma )\times  H^1(\Gamma ) \to\mathbb{C}$ by
 \[
 a _{\Gamma  } (\xi _1,\xi _2) = \int_{\Gamma } \beta  \,(\nabla_{\Gamma  }  \xi _1 \cdot  \nabla_{\Gamma  } \overline{\xi }_2+\xi _1 \, \overline{\xi }_2)\,dS 
 \quad\mbox{ and }\quad
 b_{\Gamma  } (\xi _1,\xi _2) = -\int_{\Gamma } (\lambda+\beta) \, \xi _1 \, \overline{\xi}_2 \,dS 
\, ,
\]
for any $ \xi_1,\xi_2\in H^1(\Gamma )$.  
Thanks to the Riesz representation theorem, we can consider the associated operators $A_{\Gamma  }, B_{\Gamma  }\!: H^1(\Gamma )\to  H^{-1}(\Gamma )$ that satisfy 
 \[
 (A_{\Gamma  } \xi _1 , \xi _2 ) _{ H^{-1} (\Gamma  )\times H^1 (\Gamma  ) } = a _{\Gamma  } (\xi _1,\xi _2) 
 \quad\mbox{ and }\quad
 (B_{\Gamma  } \xi _1 , \xi _2 ) _{ H^{-1} (\Gamma  )\times H^1  (\Gamma  ) } =  b_{\Gamma  } (\xi _1,\xi _2) 
 \, ,
\]
for any $ \xi_1,\xi_2\in H^1(\Gamma )$. 
Notice that, under assumption (\ref{hyp-E!forwardprob}),
 \[
 \Re \left( a _{\Gamma  } (\xi,\xi)\right)  = \int_{\Gamma } \Re (\beta )  \, (|\nabla_{\Gamma  }  \xi |^2+|  \xi |^2) \,dS \geq c \, \Vert \xi\Vert ^2 _{H^1(\Gamma) } \qquad \forall \xi \in H^1(\Gamma ) \, ,
 \] 
and, in consequence, using the Lax-Milgram theorem guarantees that $A_{\Gamma  }\! : H^1(\Gamma )\to  H^{-1}(\Gamma )$ is an isomorphism. 

Also notice that, by Rellich's theorem we know that $H^1 (\Gamma  )$ is compactly embedded into $L^2 (\Gamma  )$, so that $B_{\Gamma  }\! : H^1(\Gamma )\to  H^{-1}(\Gamma )$ is compact.

Moreover, under Assumption \ref{A},  $A_{\Gamma  }+B_{\Gamma  }\! : H^1(\Gamma )\to  H^{-1}(\Gamma )$
is injective. Therefore, by the Fredholm alternative, $A_{\Gamma  }+B_{\Gamma  }\! : H^1(\Gamma )\to  H^{-1}(\Gamma )$ is an isomorphism. 
\end{proof}

The next lemma shows that the  impedance boundary condition does not cause a loss of uniqueness for the scattering problem.
\begin{lemma}\label{lemmaGIBCsign}
For any $\eta\in H^{-1}(\Gamma )$, it holds that 
$$\Im \left( ( G_{\Gamma}\eta , {\eta})_{H^{1}( \Gamma)\times H^{-1}(\Gamma )} \right) 
\geq 0\, .$$
\end{lemma}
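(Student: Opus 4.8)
The plan is to test the weak definition (\ref{Gdef}) of $G_\Gamma\eta$ against the natural test function $\xi=G_\Gamma\eta$ itself, which turns the duality pairing on the left-hand side into the sesquilinear form evaluated diagonally. Writing $w:=G_\Gamma\eta\in H^1(\Gamma)$, equation (\ref{Gdef}) with $\xi=w$ gives
\[
\int_\Gamma\big(\beta\,|\nabla_\Gamma w|^2-\lambda\,|w|^2\big)\,dS
=(\,\eta,\,w\,)_{H^{-1}(\Gamma)\times H^1(\Gamma)}
=\overline{(\,G_\Gamma\eta,\,\eta\,)_{H^1(\Gamma)\times H^{-1}(\Gamma)}}\,,
\]
so that taking imaginary parts (and using $\Im\overline z=-\Im z$) yields
\[
\Im\big((G_\Gamma\eta,\eta)_{H^1(\Gamma)\times H^{-1}(\Gamma)}\big)
=-\,\Im\!\int_\Gamma\big(\beta\,|\nabla_\Gamma w|^2-\lambda\,|w|^2\big)\,dS
=-\int_\Gamma\Im(\beta)\,|\nabla_\Gamma w|^2\,dS+\int_\Gamma\Im(\lambda)\,|w|^2\,dS\,.
\]
Now invoke the standing hypotheses (\ref{hyp-E!forwardprob}): $\Im(\beta)\le 0$ a.e.\ makes the first integral nonpositive, hence $-\int_\Gamma\Im(\beta)\,|\nabla_\Gamma w|^2\,dS\ge 0$; and $\Im(\lambda)\ge 0$ a.e.\ makes the second integral nonnegative. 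Both terms are therefore $\ge 0$, and the claimed inequality follows.

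The only genuine point to be careful about is the sign bookkeeping when passing between the pairing $(G_\Gamma\eta,\eta)_{H^1\times H^{-1}}$ that appears in the statement and the pairing $(\eta,\xi)$ (in the opposite order, and with the conjugate on $\xi$) that appears in (\ref{Gdef}); one must also keep track of the complex conjugate implicit in the $dS$-integral notation used in (\ref{Gdef}). Once the convention that $(\eta,\xi)_{H^{-1}\times H^1}=\int_\Gamma\eta\,\overline\xi\,dS$ extends the $L^2$ pairing is fixed (consistent with its use throughout the variational formulation), the identification above is immediate and there is no real obstacle — this is a one-line ``test against the solution'' argument. It is worth remarking that, taken together with Lemma~\ref{lemma-Gok}, this shows $G_\Gamma$ plays exactly the role of a dissipative boundary operator, which is what will later prevent spurious resonances in the truncated problem.
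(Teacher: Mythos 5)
Your proof is correct and follows exactly the paper's own argument: test the weak definition (\ref{Gdef}) with $\xi=G_\Gamma\eta$, use the conjugation in the duality pairing to flip the sign of the imaginary part, and conclude from $\Im(\beta)\le 0$ and $\Im(\lambda)\ge 0$ in (\ref{hyp-E!forwardprob}). The sign bookkeeping you highlight is handled the same way in the paper, so there is nothing to add.
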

\begin{proof}
Using the variational definition of $G_{\Gamma}:H^{-1}(\Gamma)\rightarrow H ^{1} (\Gamma )$ for $\eta\in H^{-1}(\Gamma)$, and choosing the test function $\xi=G_{\Gamma}\eta\in H^1(\Gamma)$, gives
 $$\int_{\Gamma}\left(\beta\,\left|\nabla_{\Gamma}G_{\Gamma} \eta\right|^2 - \lambda\,\left| G_{\Gamma}\eta\right|^2\right)\,dS = \int_{\Gamma}\eta\;\overline{G_{\Gamma}\eta}\,dS.$$
This implies
 \begin{eqnarray*}
\Im\left( \int_{\Gamma}G_{\Gamma}\eta \;\overline{\eta }\,dS\right) &=& -\Im \left(\int_{\Gamma}\eta \;\overline{G_{\Gamma}\eta}\,dS\right) \\
&=& \int_{\Gamma}\left(-\Im ( \beta) \left|\nabla_{\Gamma}G_{\Gamma} \eta\right|^2 + \Im ( \lambda)\left|G_{\Gamma}\eta \right|^2\right)\,dS \, \geq\, 0 \, .
\end{eqnarray*}
The last inequality follows from the assumptions $\Im (\beta) \leq 0$ and $\Im(\lambda) \geq 0$ in (\ref{hyp-E!forwardprob}).
\end{proof}

Starting our analysis of (\ref{prob-FwdProbV}) we show that any solution is unique.

\begin{lemma}\label{lemma-FwdProbVok}
 Problem (\ref{prob-FwdProbV}) has at most one solution.
\end{lemma}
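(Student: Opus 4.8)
The plan is to exploit linearity together with the sign conditions encoded in Lemmas~\ref{lemmaNtDsign} and~\ref{lemmaGIBCsign}. Since (\ref{prob-FwdProbV}) depends linearly on the data, it suffices to show that the only $\bfv\in\Hdiv$ satisfying the homogeneous problem (right-hand side replaced by $0$) is $\bfv=0$. So I would fix such a $\bfv$ and take $\bfw=\bfv$ in the homogeneous version of (\ref{prob-FwdProbV}). Because $k\in\mathbb{R}$, the volume term $\int_{\Omega_R}(\frac1{k^2}|\nabla\cdot\bfv|^2-|\bfv|^2)\,d\bfx$ is real, so taking imaginary parts gives
\[
\Im\!\left(\int_{\Sigma_R}\!N_R(\bfv\cdot\bfnu)\,\overline{\bfv\cdot\bfnu}\,dS\right)
-\Im\!\left(\int_{\Gamma}\!G_{\Gamma}(\bfv\cdot\bfnu)\,\overline{\bfv\cdot\bfnu}\,dS\right)=0 \, .
\]
By Lemma~\ref{lemmaNtDsign} the first term is $\le 0$, while by Lemma~\ref{lemmaGIBCsign} the second term together with its minus sign is also $\le 0$; since the two sum to zero, each vanishes separately. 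In particular $\Im(\int_{\Sigma_R}N_R(\bfv\cdot\bfnu)\,\overline{\bfv\cdot\bfnu}\,dS)=0$, and the diagonal representation of this quantity derived in the proof of Lemma~\ref{lemmaNtDsign} (namely $-4\sum_n|f_n|^2/(k^2|H_n^{(1)'}(kR)|^2)$ with $f_n$ the Fourier coefficients of $f:=\bfv\cdot\bfnu$) forces every Fourier coefficient of $\bfv\cdot\bfnu$ on $\Sigma_R$ to be zero, i.e.\ $\bfv\cdot\bfnu=0$ in $H^{-1/2}(\Sigma_R)$.

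Next I would pass from the weak to the strong form on $\Omega_R$. Testing the homogeneous (\ref{prob-FwdProbV}) against $\bfw\in C_c^\infty(\Omega_R)^2$ shows $\nabla\nabla\cdot\bfv+k^2\bfv=0$ in $\Omega_R$, and then an integration by parts recovers the natural boundary relation $(\nabla\cdot\bfv)|_{\Sigma_R}=-k^2N_R(\bfv\cdot\bfnu)$. Putting $u:=-\frac1{k^2}\nabla\cdot\bfv$, the PDE gives $\nabla u=\bfv$, hence $\Delta u+k^2u=0$ in $\Omega_R$ and, by interior elliptic regularity, $u$ is smooth near $\Sigma_R$. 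Combining the two facts just obtained yields $\partial u/\partial r=\bfv\cdot\bfnu=0$ and $u|_{\Sigma_R}=N_R(\bfv\cdot\bfnu)=N_R(0)=0$ on $\Sigma_R$; that is, $u$ has vanishing Cauchy data on $\Sigma_R$.

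Finally I would conclude with a unique continuation argument. Extending $u$ by zero across $\Sigma_R$ onto $B_{R+\epsilon}\setminus\overline{D}$ produces an $H^1_{\rm loc}$ weak solution of the Helmholtz equation there (the Cauchy data of $u$ match those of the zero function on $\Sigma_R$), which vanishes on the nonempty annulus $B_{R+\epsilon}\setminus\overline{B_R}$. Since $\mathbb{R}^2\setminus\overline{D}$ is connected, so is $B_{R+\epsilon}\setminus\overline{D}$, and the unique continuation principle for the Helmholtz equation forces the extension to vanish identically, hence $u\equiv 0$ in $\Omega_R$ and $\bfv=\nabla u\equiv 0$. Since the difference of two solutions of (\ref{prob-FwdProbV}) solves the homogeneous problem, uniqueness follows. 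I expect the only delicate point to be justifying, in the correct $H^{\pm1/2}$ trace spaces, the boundary identities on $\Sigma_R$ extracted from the weak formulation; once $\bfv\cdot\bfnu=0$ and $u|_{\Sigma_R}=0$ are in hand, the unique continuation step is routine. (Alternatively, one can identify $u$ near $\Sigma_R$ with the exterior solution of (\ref{eq-NR}) having zero Neumann data, which is identically zero, and again invoke unique continuation.)
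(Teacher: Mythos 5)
Your proof is correct, but it takes a genuinely different route from the paper's. The paper first performs the Helmholtz decomposition $\bfv=\nabla u+\boldsymbol{\psi}$ with $\boldsymbol{\psi}\in H_0(\mathrm{div}^0;\Omega_R)$, shows $\boldsymbol{\psi}=\boldsymbol{0}$ by identifying $\nabla(\frac{1}{k^2}\Delta u+u)=-\boldsymbol{\psi}$ as the solution of an interior Neumann problem for the Laplacian, recovers the scalar Helmholtz problem for $\tilde u=u-C_u$ with the boundary relations $\tilde u=N_R(\partial\tilde u/\partial\bfnu)$ on $\Sigma_R$ and the analogous relation on $\Gamma$, and then concludes by invoking the invertibility of $N_R$ and $G_\Gamma$ together with the known uniqueness theorem for the scattering problem with GIBC from \cite[Th.2.1]{BCHprevious}. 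You instead extract the strong equation $\nabla\nabla\cdot\bfv+k^2\bfv=0$ first, which already exhibits $\bfv$ as the gradient of $u=-k^{-2}\nabla\cdot\bfv$ (so no Helmholtz decomposition is needed), and then run the classical Rellich-type argument: the energy identity with $\bfw=\bfv$ plus the sign conditions of Lemmas~\ref{lemmaNtDsign} and~\ref{lemmaGIBCsign} force $\bfv\cdot\bfnu=0$ on $\Sigma_R$ via the explicit Wronskian series, hence vanishing Cauchy data for $u$ on $\Sigma_R$, and unique continuation finishes. Your version is self-contained --- it never needs the external GIBC uniqueness result, nor the invertibility of $G_\Gamma$, nor in fact the boundary condition on $\Gamma$ beyond its sign --- at the cost of invoking unique continuation and of the trace-space bookkeeping you flag (the identities $u|_{\Sigma_R}=N_R(\bfv\cdot\bfnu)$ and $\partial u/\partial\bfnu=\bfv\cdot\bfnu$ must be read in $H^{\pm 1/2}(\Sigma_R)$, which is legitimate since $u\in H^1(\Omega_R)$ with $\bfv=\nabla u\in\Hdiv$). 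The paper's route is shorter given the cited reference and is reused almost verbatim in the semidiscrete analysis; yours makes the uniqueness mechanism explicit and would also be the natural template if the reference were unavailable.
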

\begin{proof} Let us consider any solution of its homogeneous counterpart, that is,  $\bfv\in H({\rm{}div};\Omega_R)$ such that
\begin{equation}\label{prob-FwdProbV_0}
\int_{\Omega_R}\! (\frac{1}{k^2}\,\nabla\cdot\bfv\,\overline{\nabla\cdot\bfw}-\bfv\cdot\overline{\bfw})\,d\bfx 
+\int_{ \Sigma_R}\!\! N_R(\bfv\cdot\bfnu)\,\overline{\bfw\cdot\bfnu}\,dS
- \int_{\Gamma } G_{\Gamma }(\bfv\cdot\bfnu)\,\overline{\bfw\cdot\bfnu}\,dS=
 0
\, ,
\end{equation}
for all $\bfw\in H({\rm{}div};\Omega_R)$.
Since $\Omega_R$ is connected, by the Helmholtz decomposition theorem (see \cite[Th.2.7-Ch.I]{GiraultRaviart}) we can rewrite $\bfv$ as 
$$
\bfv =\nabla u +\boldsymbol{\psi} \qquad\text{in } \Omega _R \, ,
$$
for some $u\in H^1(\Omega _R)$ and $\boldsymbol{\psi}\in H_0 (\mathrm{div}^0;\Omega _R)$, where 
$$
H_0 (\mathrm{div}^0;\Omega _R)=\Big\{ \bfw\in H (\mathrm{div};\Omega _R)\, ;\,\, \nabla\cdot\bfw=0\,\,\text{in }\Omega _R ,\, \bfw\cdot\bfnu=0\,\,\text{on } \partial\Omega _R =\Gamma\cup \Sigma _R \Big\}\, .
$$
Then, the homogeneous problem (\ref{prob-FwdProbV_0}) may be rewritten as
\begin{equation}\label{prob-FwdProbV_0_bis}
\int_{\Omega_R}\! \Big(\frac{1}{k^2}\,\Delta u\,\overline{\nabla\cdot\bfw}-(\nabla u +\boldsymbol{\psi})\cdot\overline{\bfw}\Big) d\bfx
+\int_{ \Sigma_R}\!\! N_R\Big(\frac{\partial u }{\partial\bfnu}\Big)\,\overline{\bfw\!\cdot\!\bfnu}\,dS
- \int_{\Gamma } G_{\Gamma \! }\Big(\frac{\partial u }{\partial\bfnu}\Big)\,\overline{\bfw\cdot\bfnu}\,dS=
 0
\, ,
\end{equation}
for all $\bfw\in H({\rm{}div};\Omega_R)$. In particular, taking $\bfw\in\mathcal{C}^{\infty}_0 (\Omega _R)^2$ we deduce that
\begin{equation*}
 \displaystyle \nabla (\frac{1}{k^2}\,\Delta u + u) + \boldsymbol{\psi} = \boldsymbol{0} \qquad \text{in }\Omega _R \, .
\end{equation*}
Noticing that $\displaystyle \nabla (\frac{1}{k^2}\,\Delta u + u) = -\boldsymbol{\psi} \in H_0 (\mathrm{div}^0;\Omega _R)$ leads to 
$$
\displaystyle \Delta (\frac{1}{k^2}\,\Delta u + u) = 0\,\mbox{ in } \Omega _R \qquad \mbox{ and }\qquad \displaystyle\frac{\partial \, }{\partial\bfnu} (\frac{1}{k^2}\,\Delta u + u)=0 \,\mbox{ on } \partial\Omega_R=\Sigma _R\cup\Gamma \, .
$$ 
Hence, by uniqueness of the solution (up to a constant) of the
interior Neumann problem for Laplace operator in $\Omega_R$, we have
that $\displaystyle\frac{1}{k^2} \Delta u + u=C_u $ in $\Omega _R$ for
some constant $C_u\in \mathbb{C}$; in particular,
$\boldsymbol{\psi} = -\displaystyle \nabla (\frac{1}{k^2}\,\Delta u +
u) =\boldsymbol{0}$ in $\Omega _R$. Furthermore, $\tilde{u}=u-C_u\in
H^1(\Omega_R)$ satisfies
\begin{equation*}
 \displaystyle \Delta \tilde{u} + k^2\,\tilde{u} =  0\qquad \text{in }\Omega _R \, ,
\end{equation*}
so that $\frac{1}{k^2}\Delta u=-\tilde{u}$ in $\Omega_R$ and we deduce from 
(\ref{prob-FwdProbV_0_bis}) that
\begin{equation*}
 \begin{array}{cl}
 \displaystyle -\tilde{u} +  N_R\Big(\frac{\partial \tilde{u} }{\partial\bfnu}\Big) = 0 \quad & \text{on } \Sigma_R \, ,\\[1.5ex]
 \displaystyle  \tilde{u}+ G_{\Gamma  \! }\Big(\frac{\partial \tilde{u} }{\partial\bfnu}\Big) = 0 \quad & \text{on }\Gamma   \, .
 \end{array}
\end{equation*}
In consequence, by the invertibility of $N_R$ and $G_{\Gamma}$ and the uniqueness of solution of the forward problem with GIBC (see \cite[Th.2.1]{BCHprevious}), we have that $\tilde{u}=0$ in $\Omega _R$; that is to say,  ${u}=C_u$ in $\Omega _R$. Summing up, we conclude that
$$
\bfv =\nabla u + \boldsymbol{\psi} = \boldsymbol{0} \qquad\text{in } \Omega _R \, .
$$
\end{proof}

Using this uniqueness result and a suitable stable splitting of $\Hdiv$, we will be able to apply \cite[Theorem 1.2]{Buffa2005} to prove the well-posedness of the continuous problem. In particular, we write
\[
\Hdiv =H({\rm{}div}^0;\Omega_R )\oplus \nabla S \, ,
\]
where 
$$
H (\mathrm{div}^0;\Omega _R)=\Big\{ \bfw\in H (\mathrm{div};\Omega _R)\, ;\,\, \nabla\cdot\bfw=0\,\,\text{in }\Omega _R  \Big\}$$
and 
$$S=\Big\{p\in H_0^1(\Omega_R)\, ; \,\, \Delta u\in L^2(\Omega_R)\Big\} \, ,
$$
and $S$ is endowed with the inner product 
\[
(p,q)_S \, =\, \int_{\Omega} (\Delta p\, \Delta \overline{q}+\nabla p\cdot\nabla\overline{q}) \,d\mathbf{x} 
\, .
\]
Notice that the orthogonality of the above splitting implies that $\mathbf{u}\in H({\rm{}div}^0;\Omega)$ if, and only if, $\mathbf{u}\in \Hdiv$ and $(\mathbf{u},\nabla q)=0$ for all $q\in S$.
We also need to define the duality pairing $\llangle\cdot ,\cdot\rrangle$ between $\Hdiv$ and its dual space  $\Hdiv '$, with respect to the pivot space $L^2(\Omega_R)^2$, so that (note: this is defined without conjugation):
\[
\llangle \mathbf{u},\bfw\rrangle 
=\int_{\Omega_R} \mathbf{u}\cdot\bfw\,d\mathbf{x} 
\quad \forall \mathbf{u}\in \Hdiv' , \, \bfw\in \Hdiv \, .
\]
According to the above splitting, any $\mathbf{u}\in \Hdiv$ has the form $\mathbf{u}=\mathbf{u}_0+\nabla p$ for some $\mathbf{u}_0\in H({\rm{}div}^0;\Omega_R)$ and $p\in S$.
By the orthogonality of the splitting, and the fact that $\nabla\cdot\mathbf{u}_0=0$, we have  that
\[
\Vert \nabla p\Vert^2_{\Hdiv}+ \Vert \mathbf{u}_0\Vert^2_{L^2(\Omega_R )^2} = \Vert \mathbf{u}\Vert^2_{\Hdiv}\]
and, in particular, the splitting is stable.
Moreover, it allows us to define the linear continuous operator $\theta:\Hdiv\to \Hdiv$ by $\theta\mathbf{u}=\nabla p-\mathbf{u}_0$.

Next we define $A:\Hdiv\to \Hdiv'$ such that if $\mathbf{u}\in \Hdiv$ then $A\mathbf{u}\in \Hdiv'$ is given via the Riesz representation theorem by
\[
\llangle A\mathbf{u},\overline{\bfw}\rrangle=a(\mathbf{u},\bfw)\quad \mbox{ for all }\bfw \in \Hdiv\, ;
\]
recalling the definition of $a(\cdot,\cdot)$ in (\ref{adef}).

We can now state and prove the following result.
\begin{theorem}\label{th-FwdProbVok}
Problem (\ref{prob-FwdProbV}) is well-posed and the Babu\v{s}ka-Brezzi condition is satisfied.
\end{theorem}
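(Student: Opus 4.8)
The plan is to apply \cite[Theorem~1.2]{Buffa2005} to the bounded sesquilinear form $a(\cdot,\cdot)$ of (\ref{adef}) on $X:=\Hdiv$, using the stable splitting $X=H({\rm div}^0;\Omega_R)\oplus\nabla S$ and the self-adjoint involution $\theta$ introduced above. Since uniqueness is already known (Lemma~\ref{lemma-FwdProbVok}), it suffices to prove a G{\aa}rding-type inequality: there exist $\alpha>0$ and a compact sesquilinear form $t$ on $X\times X$ (i.e. one induced by a compact operator) such that
\[
\Re\,a(\mathbf{v},\theta\mathbf{v})\ \ge\ \alpha\,\Vert\mathbf{v}\Vert^2_{\Hdiv}\ -\ t(\mathbf{v},\mathbf{v})\qquad\text{for all }\mathbf{v}\in X .
\]
All remainder forms appearing below will be of the type $(\mathbf{v},\mathbf{w})\mapsto(\mathcal{L}_1\mathbf{v},\mathcal{L}_2\mathbf{w})$ with one of the two factors compact, hence compact forms.

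For the volume part I would write $\mathbf{v}=\mathbf{v}_0+\nabla q$ with $\mathbf{v}_0\in H({\rm div}^0;\Omega_R)$, $q\in S$, so that $\theta\mathbf{v}=\nabla q-\mathbf{v}_0$ and $\nabla\cdot\mathbf{v}=\nabla\cdot(\theta\mathbf{v})=\Delta q$. Using the $L^2(\Omega_R)^2$-orthogonality of the splitting, the first integral in $a(\mathbf{v},\theta\mathbf{v})$ equals $\frac{1}{k^2}\Vert\Delta q\Vert^2_{L^2(\Omega_R)}+\Vert\mathbf{v}_0\Vert^2_{L^2(\Omega_R)}-\Vert\nabla q\Vert^2_{L^2(\Omega_R)}$, which is real. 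The only term with the wrong sign is $-\Vert\nabla q\Vert^2_{L^2(\Omega_R)}$, and by integration by parts and Young's inequality $\Vert\nabla q\Vert^2_{L^2(\Omega_R)}\le\varepsilon\Vert\Delta q\Vert^2_{L^2(\Omega_R)}+C_\varepsilon\Vert q\Vert^2_{L^2(\Omega_R)}$ for $q\in H^1_0(\Omega_R)$ and every $\varepsilon>0$; moreover $\mathbf{v}\mapsto q$ is bounded from $X$ into $S\subset H^1_0(\Omega_R)$, which embeds compactly into $L^2(\Omega_R)$. Choosing $\varepsilon$ small and recalling that $\Vert\mathbf{v}\Vert^2_{\Hdiv}=\Vert\mathbf{v}_0\Vert^2_{L^2(\Omega_R)}+\Vert\nabla q\Vert^2_{L^2(\Omega_R)}+\Vert\Delta q\Vert^2_{L^2(\Omega_R)}$, the volume part is bounded below by $\alpha_0\Vert\mathbf{v}\Vert^2_{\Hdiv}$ minus a compact form in $\mathbf{v}$.

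Next I would show that the two boundary terms perturb this estimate only by compact forms and by nonnegative terms. The integral over $\Gamma$ is itself a compact form: $\mathbf{v}\mapsto\mathbf{v}\cdot\bfnu$ is bounded from $X$ into $H^{-1/2}(\Gamma)$, which embeds compactly into $H^{-1}(\Gamma)$, and by Lemma~\ref{lemma-Gok} the operator $G_\Gamma$ is bounded from $H^{-1}(\Gamma)$ into $H^1(\Gamma)\hookrightarrow H^{1/2}(\Gamma)$, so $\mathbf{v}\mapsto G_\Gamma(\mathbf{v}\cdot\bfnu)$ is compact from $X$ into $H^{1/2}(\Gamma)$; note that no smoothness of $\beta,\lambda$ is required, since the compact embedding $H^{-1/2}(\Gamma)\hookrightarrow H^{-1}(\Gamma)$ already provides the gain. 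For the integral over $\Sigma_R$ I would use the splitting $N_R=\hat{N}_R+\tilde{N}_R$ from (\ref{Nrsplit}): the $\hat{N}_R$-part is a compact form by the same argument (now $\hat{N}_R:H^{-1/2}(\Sigma_R)\to H^{3/2}(\Sigma_R)\hookrightarrow H^{1/2}(\Sigma_R)$ compactly). For the $\tilde{N}_R$-part, writing $\mathbf{v}\cdot\bfnu=\mathbf{v}_0\cdot\bfnu+\partial q/\partial\bfnu$ and $(\theta\mathbf{v})\cdot\bfnu=\partial q/\partial\bfnu-\mathbf{v}_0\cdot\bfnu$ on $\Sigma_R$, and using that $\tilde{N}_R$ is a real (hence self-adjoint) Fourier multiplier, one computes
\[
\Re\!\int_{\Sigma_R}\!\tilde{N}_R(\mathbf{v}\cdot\bfnu)\,\overline{(\theta\mathbf{v})\cdot\bfnu}\,dS\ =\ \Big(\tilde{N}_R\frac{\partial q}{\partial\bfnu},\frac{\partial q}{\partial\bfnu}\Big)_{\!L^2(\Sigma_R)}-\big(\tilde{N}_R(\mathbf{v}_0\cdot\bfnu),\mathbf{v}_0\cdot\bfnu\big)_{L^2(\Sigma_R)} .
\]
The second term is $\ge0$ because $\tilde{N}_R$ is negative definite, and the first is a compact form: by elliptic regularity $S\subset H^2(\Omega_R)$, so $\mathbf{v}\mapsto q\mapsto\partial q/\partial\bfnu$ is compact from $X$ into $H^{1/2}(\Sigma_R)$, on which $\tilde{N}_R$ is bounded. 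Hence the $\Sigma_R$ integral contributes $\ge-t_3(\mathbf{v},\mathbf{v})$ with $t_3$ compact.

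Putting these estimates together gives the G{\aa}rding inequality. Combined with the uniqueness result of Lemma~\ref{lemma-FwdProbVok}, \cite[Theorem~1.2]{Buffa2005} then yields that the operator $A$ is an isomorphism — so problem (\ref{prob-FwdProbV}) is well-posed — and that $a(\cdot,\cdot)$ satisfies the inf-sup (Babu\v{s}ka--Brezzi) condition; since $\theta$ is a self-adjoint involution, $A$ is Fredholm of index zero and the analogous statements for the transposed form follow as well. I expect the main obstacle to be the indefiniteness of the volume form: $\theta$ restores positivity in $\Omega_R$ only up to the term $-\Vert\nabla q\Vert^2_{L^2(\Omega_R)}$, and absorbing it requires combining the Poincar\'e/Young estimate on $H^1_0(\Omega_R)$ with the compactness of $\mathbf{v}\mapsto q$ coming from elliptic regularity for $S$; a secondary but essential point is to verify that the non-compact part $\tilde{N}_R$ of the NtD map pairs with $\theta$ with the correct sign, as in the displayed identity.
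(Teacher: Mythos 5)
Your proposal is correct and follows essentially the same route as the paper: the same stable splitting $\Hdiv=H(\mathrm{div}^0;\Omega_R)\oplus\nabla S$, the same sign-flipping operator $\theta$, a decomposition of $a(\cdot,\theta\cdot)$ into a coercive part plus compact perturbations, and the conclusion via Buffa's abstract theorem combined with the uniqueness of Lemma~\ref{lemma-FwdProbVok}. The only (harmless) differences are in bookkeeping: the paper handles the indefinite volume term by writing $-\Vert\nabla q\Vert^2_{L^2(\Omega_R)}=+\Vert\nabla q\Vert^2_{L^2(\Omega_R)}-2\int_{\Omega_R}\nabla q\cdot\overline{\nabla q}\,d\bfx$ and moving the last piece into the compact form (using $S\subset H^2(\Omega_R)$) instead of your Young-inequality absorption, and it keeps the full term $-\int_{\Sigma_R}N_R(\mathbf{u}_0\cdot\bfnu)\,\overline{\mathbf{u}_0\cdot\bfnu}\,dS$ in the coercive part using Lemma~\ref{lemmaNtDsign} rather than splitting $N_R=\hat{N}_R+\tilde{N}_R$ at that stage, while your compactness argument for the $\Gamma$ term (via $H^{-1/2}(\Gamma)\hookrightarrow H^{-1}(\Gamma)$ and the smoothing of $G_\Gamma$) is if anything a cleaner justification than the one sketched in the paper.
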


\begin{proof}
Let $\mathbf{u}\in \Hdiv$ be split into $\mathbf{u}=\mathbf{u}_0+\nabla p$ for some $\mathbf{u}_0\in H({\rm{}div}^0;\Omega_R )$ and $p\in S$, and similarly $\bfw=\bfw_0+\nabla q \in \Hdiv$. Then 
\begin{equation}\label{aexpand}
\begin{array}{rcl}
a(\mathbf{u},\theta\bfw)&=&\displaystyle \int_{\Omega_R}\left(\frac{1}{k^2}
\Delta p\cdot\Delta \overline{q}+\nabla p\cdot\nabla \overline{q}+\mathbf{u}_0\cdot\overline{\bfw_0}\right)\,d\bfx\\
&&\quad +\displaystyle\int_{\Sigma_R}N_R((\mathbf{u}_0+\nabla p)\cdot\bfnu) (\overline{\nabla q-\bfw_0})\cdot\bfnu\,dS\\&&
- \int_{\Gamma}G_\Gamma ((\mathbf{u}_0+\nabla p)\cdot\bfnu)  (\overline{\nabla q-\bfw_0})\cdot\bfnu\,dS \\
&& \quad -2\,\displaystyle \int_{\Omega_R}\nabla p\cdot\nabla \overline{q}\,d\bfx \, .
\end{array}
\end{equation}
We expand the troublesome term
\begin{eqnarray*}
  \lefteqn{\int_{\Sigma_R} N_R ((\mathbf{u}_0+\nabla p)\cdot\bfnu)\,(\overline{\nabla q-\bfw_0})\cdot\bfnu\,dS}\\
  &=&
  -\int_{\Sigma_R} N_R (\mathbf{u}_0\cdot\bfnu)\,\overline{\bfw_0}\cdot\bfnu\,dS +
  \int_{\Sigma_R} N_R(\mathbf{u}_0\cdot\bfnu)\,\nabla \overline{q}\cdot\bfnu\,dS \\
&&\qquad-
 \int_{\Sigma_R} N_R(\nabla p\cdot\bfnu)\,\overline{\bfw_0}\cdot\bfnu\,dS+
 \int_{\Sigma_R} N_R(\nabla p\cdot\bfnu)\,\nabla \overline{q}\cdot\bfnu\,dS \, .
\end{eqnarray*}
So we can define the sesquilinear form
\[
a_+(\mathbf{u},\theta\bfw)=
\frac{1}{k^2} \, \int_{\Omega_R}\left(\Delta p\,\Delta \overline{q} + \nabla p\cdot\nabla \overline{q}  + \mathbf{u}_0\cdot\overline{\bfw_0}\right)\,d\bfx
- \int_{\Sigma_R}
N _R (\mathbf{u}_0\cdot\bfnu)\,\overline{\bfw_0}\cdot\bfnu\,dS  \, ,
\]
and use the remaining terms in (\ref{aexpand}) to define the sesquilinear form
\begin{eqnarray*}
b(\mathbf{u},\theta\bfw)&=&
   \int_{\Sigma_R}N_R(\mathbf{u}_0\cdot\bfnu)\,\nabla \overline{q}\cdot\bfnu\,dS 
-
   \int_{\Sigma_R} N_R(\nabla p\cdot\bfnu)\,\overline{\bfw_0}\cdot\bfnu\,dS
+
   \int_{\Sigma_R} N_R(\nabla p\cdot\bfnu)\,\nabla \overline{q}\cdot\bfnu\,dS 
\\&&\quad -  \int_{\Sigma_R} G_\Gamma(\mathbf{u}_0+\nabla p)\cdot\bfnu\,(\overline{\nabla q
- \bfw_0})\cdot\bfnu\,dS-2\, \int_{\Omega_R}\nabla p\cdot\nabla \overline{q}\,d\bfx \, .
\end{eqnarray*}
On the one hand, since $\Re( 
N _R )$ is negative definite (see Lemma \ref{lemmaNtDsign}) and the splitting of the space is stable, we have that there is a constant $\alpha>0$
independent of $\mathbf{u}\in H({\rm{}div};\Omega ) $ such that
\[
\Re(a_+(\mathbf{u},\theta\mathbf{u}))\, \geq\, \alpha\, \Vert\mathbf{u} \Vert_{\Hdiv}^2 \, .
\]
Now define the operator $T:\Hdiv\to \Hdiv'$ by
\[
\llangle T\mathbf{u},\overline{\bfw} \rrangle
=-b(\mathbf{u},\bfw ) \qquad \forall \mathbf{u},\bfw\in \Hdiv \, .
\]
Notice that $T$ is compact because each sesquilinear form in its definition is compact. For example, the sesquilinear form
\[
 \int_{\Sigma_R} N_R (\mathbf{u}_0\cdot\bfnu)\,\nabla \overline{q}\cdot\bfnu\,dS
\]
is compact by \cite[Theorem 1.3]{Kress77}, because the trace of  functions in $S$
defined into $H^{1/2}(\Sigma_R)$ is a compact operator; indeed, $S$ 
is a subset of $H^2(\Omega_R )$ due to our assumption of a smooth boundary $\partial \Omega_R = \Gamma \cap \Sigma _R$, and the normal derivative operator 
is compact from $H^2(\Omega_R  )$ into $H^{1/2}(\Sigma_R)$.  The remaining  sesquilinear forms are also compact by the same reasoning.
Hence $T$ is compact.
Then we conclude that
\[
\llangle (A+T)\mathbf{u},\theta\overline{\mathbf{u}}\rrangle\, =\,
a_+(\mathbf{u},\theta\mathbf{u})\geq \alpha \Vert \mathbf{u}\Vert_{\Hdiv}^2\, .
\]
Hence all the conditions of \cite[Assumption 1]{Buffa2005} are satisfied
and the existence of a unique solution to (\ref{prob-FwdProbV}) is shown by
\cite[Theorem 2.1]{Buffa2005}.  In addition this theorem shows that there is
an isomorphism $\tilde{\theta}:\Hdiv\to \Hdiv$
such that
\[
\llangle A\mathbf{u},\tilde{\theta}\overline{\mathbf{u}}\rrangle
\, \geq\,
 \alpha\, \Vert \mathbf{u}\Vert_{\Hdiv}^2 \, .
\]
This in turn implies that the Babu\v{s}ka-Brezzi condition is satisfied.
\end{proof}


\section{A Semidiscrete Problem}\label{semi}
In this section we consider a semidiscrete problem in which the GIBC
boundary operator is discretized but the space where we search for the
solution in $\Omega_R$ is not.  As discussed in the introduction,
we shall not consider the truncation of the NtD map here.

We shall need an additional assumption on the boundary operator
$G_\Gamma$.  In particular we need to know that it smooths the
solution on the boundary $\Gamma$, so we make the following second
assumption.
\begin{assumption}\label{G} 
For each $-1 \leq s \leq -1/2$, it holds that $G_\Gamma ( H^{s}(\Gamma)) \subseteq H^{s+2}(\Gamma)$ and  
there exists $C_s^{\Gamma}>0$ such that $\Vert G_\Gamma\lambda\Vert_{H^{s+2}(\Gamma)}\leq C_s^{\Gamma} \, \Vert \lambda\Vert_{H^{s}(\Gamma)}$ for any $\lambda\in H^s(\Gamma )$. 
\end{assumption}
\begin{remark}
Note that if Assumption \ref{G} holds, since $G^*_\Gamma\lambda=\overline{G_{\Gamma}\lambda}$, it also holds for $G_\Gamma^*$.
\end{remark}
Notice that Assumption \ref{G} further constrains the choice of the
coefficients $\beta$ and $\lambda$ in the generalized impedance
boundary condition on $\Gamma$.  Its role will be clarified in Lemma
\ref{lemma-GH}, where we apply Schatz's analysis \cite{Schatz} in
order to show that the finite element approximation of $G_\Gamma$,
defined shortly, converges.

%

On the inner boundary $\Gamma $ we consider a finite dimensional
subspace $S_H\subset H^1(\Gamma )$ of continuous piecewise polynomials
of degree at least $P$(with $P\geq 1$) on a mesh ${\cal
  T}_H^\Gamma$. We assume that the mesh ${\cal T}_H^\Gamma$ consists
of segments of the boundary $\Gamma$ of maximum length $H>0$, and that
it is regular and quasi-uniform: the latter means
that there exists a constant $\sigma^\Gamma \in [1,\infty)$ such that
\[
\frac{H}{H_e}\leq \sigma^\Gamma\quad \mbox{ for all edges }e\in\mathcal{T}^{\Gamma}_H \mbox{ and all }H>0 \, ,
\]
where $H_e$ denotes the arc length of the edge $e$ in the mesh.

\begin{remark}
Other choices of the discretization space on $\Gamma$  are possible.  For example we could use a trigonometric basis or a smoother spline space on $\Gamma$; these particular choices have advantages in that they would provide faster convergence of the UWVF scheme.  We shall not discuss them explicitly here but
will give an example of the use of a trigonometric space in Section \ref{num}.
\end{remark}

Then we approximate $G_{\Gamma  }\! :H^{-1}(\Gamma  )\to H^{1}(\Gamma  )$ by $G_{\Gamma  }^H\! :H^{-1}(\Gamma  )\to S_H$ using a discrete counterpart of (\ref{Gdef}). Indeed, each $\eta\in H^{-1}(\Gamma )$ is mapped onto $G^H_{\Gamma  }\eta\in S_{H}$,  the unique solution of
\begin{equation}
\int_{\Gamma }\! \Big(\beta \,\nabla_{\Gamma  } (G^H_{\Gamma  } \!\eta) \cdot \nabla_{\Gamma  }\overline{\xi} -\lambda\, G^H_{\Gamma  }\! \eta \,\overline{\xi}\Big) \, dS=\int_{\Gamma } \eta\,\overline{\xi}\,dS \qquad\forall\xi\in S_{H} .
\label{defGH}
\end{equation}
Notice that, as happens at continuous level, this definition can be applied for functions in a bigger space, which is now  $S_{H}'$  the dual space of $S_{H}$ with pivot space $L^2(\Gamma  )$. 
Indeed, Assumptions \ref{A} and \ref{G}, and the conditions on the coefficients in (\ref{hyp-E!forwardprob}), allow us to show that this operator is well-defined for $H$ small enough applying the usual Schatz's analysis~\cite{Schatz} of non-coercive sesquilinear forms. Such argument is quite standard and we do not give the details here: We just mention that it applies, not just because of the approximation properties of $S_H$, but since the operator $G_{\Gamma}: H^{-1}(\Gamma)\to H^1(\Gamma )$ can be understood as the solution operator for a bounded sequilinear form which is the superposition of a compact and a coercive sesquilinear forms; see the proof of Lemma \ref{lemma-Gok}.

\begin{lemma}\label{lemma-GH}
The operator $G^H_{\Gamma  }\! : S_{H}'\to S_{H}$ is an isomorphism  for any $H>0$ small enough. 
Furthermore, if $\lambda\in H^{-1/2}(\Gamma)$ is smooth enough that $G_\Gamma\lambda\in H^t(\Gamma)$ for some $t\in [1,P+1]$, then the following error estimate holds:
\[
\Vert (G_\Gamma-G_\Gamma^H)\lambda\Vert_{H^{s}(\Gamma)}\, \leq\,  CH^{t-s}\Vert G_\Gamma\lambda
\Vert_{H^{t}(\Gamma)}
\]
for any $s\in [0,1]$, and where $C$ is independent of $\lambda$.
\end{lemma}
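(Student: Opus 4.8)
The plan is to establish the isomorphism property by the standard Schatz argument and then derive the error estimate via a duality (Aubin–Nitsche) argument combined with C\'ea-type quasi-optimality. First I would recall, as sketched just before the lemma statement, that $G_\Gamma$ is the solution operator for the sesquilinear form underlying (\ref{Gdef}), which decomposes as a coercive part $a_\Gamma$ plus a compact part $b_\Gamma$ (exactly as in the proof of Lemma~\ref{lemma-Gok}). Since $a_\Gamma$ is coercive on $H^1(\Gamma)$ and $S_H\subset H^1(\Gamma)$, the discrete form inherits coercivity, and the approximation property of $S_H$ (continuous piecewise polynomials of degree $\geq P\geq 1$ on a quasi-uniform mesh) together with the compactness of $b_\Gamma$ lets Schatz's lemma apply: for $H$ small enough the discrete problem (\ref{defGH}) has a unique solution and $G^H_\Gamma\colon S_H'\to S_H$ is an isomorphism, with a uniform discrete inf-sup constant. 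This also yields the quasi-optimality estimate $\Vert(G_\Gamma-G^H_\Gamma)\lambda\Vert_{H^1(\Gamma)}\leq C\inf_{\xi\in S_H}\Vert G_\Gamma\lambda-\xi\Vert_{H^1(\Gamma)}$.

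Next, for the $H^1(\Gamma)$ error estimate, I would combine the quasi-optimality bound with standard interpolation error estimates on the quasi-uniform mesh $\mathcal{T}_H^\Gamma$: if $G_\Gamma\lambda\in H^t(\Gamma)$ with $1\leq t\leq P+1$, then $\inf_{\xi\in S_H}\Vert G_\Gamma\lambda-\xi\Vert_{H^1(\Gamma)}\leq CH^{t-1}\Vert G_\Gamma\lambda\Vert_{H^t(\Gamma)}$, giving the claimed estimate for $s=1$. For $s=0$, i.e. the $L^2(\Gamma)$ estimate, I would run an Aubin–Nitsche duality argument: set $e=(G_\Gamma-G^H_\Gamma)\lambda$, introduce the adjoint problem whose solution operator is $G_\Gamma^*$ (which satisfies the same smoothing Assumption~\ref{G} by the remark following it, since $G_\Gamma^*\lambda=\overline{G_\Gamma\lambda}$), write $\Vert e\Vert_{L^2(\Gamma)}$ as a duality pairing against the adjoint solution $w=G_\Gamma^*(e/\Vert e\Vert)$, use Galerkin orthogonality of $e$ against $S_H$ to subtract any $\xi_H\in S_H$, and bound by $\Vert e\Vert_{H^1(\Gamma)}\inf_{\xi_H}\Vert w-\xi_H\Vert_{H^1(\Gamma)}$. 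Since Assumption~\ref{G} gives $w\in H^{3/2}(\Gamma)$ (indeed at least $H^2$, as $e\in L^2\subset H^{-1/2}$), interpolation yields $\inf_{\xi_H}\Vert w-\xi_H\Vert_{H^1(\Gamma)}\leq CH^{1/2}\Vert w\Vert_{H^{3/2}(\Gamma)}\leq CH\Vert e\Vert_{L^2(\Gamma)}$ — actually a full power of $H$ because $w\in H^2(\Gamma)$ — so that $\Vert e\Vert_{L^2(\Gamma)}\leq CH\Vert e\Vert_{H^1(\Gamma)}\leq CH^t\Vert G_\Gamma\lambda\Vert_{H^t(\Gamma)}$. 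The intermediate cases $0<s<1$ then follow by interpolation between the $s=0$ and $s=1$ estimates (or equivalently by function-space interpolation of the error operator).

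The main obstacle I anticipate is bookkeeping the regularity shifts correctly so that the duality argument actually gains a clean factor $H^{1}$ (not merely $H^{1/2}$) in the $L^2$ estimate: one must use the full strength of Assumption~\ref{G}, which guarantees $G_\Gamma$ (and $G_\Gamma^*$) maps $H^{-1}(\Gamma)$ into $H^{1}(\Gamma)$ and more precisely $H^s\to H^{s+2}$ for $-1\leq s\leq -1/2$, so the adjoint solution $w$ with data in $L^2(\Gamma)\hookrightarrow H^{-1/2}(\Gamma)$ lies in $H^{3/2}(\Gamma)$ with norm controlled by $\Vert e\Vert_{L^2(\Gamma)}$ — and since $L^2\hookrightarrow H^{-1/2}$ with a uniform constant this is exactly the regularity needed. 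A secondary subtlety is that the estimate is claimed uniformly in $\lambda$ with $C$ independent of $\lambda$; this is automatic once the discrete inf-sup constant from Schatz's lemma is shown to be bounded below uniformly for $H$ small, which is the standard conclusion of that argument. The rest — interpolation error estimates on quasi-uniform meshes and Galerkin orthogonality — is routine and I would cite \cite{Schatz} for the non-coercive framework rather than reproduce it.
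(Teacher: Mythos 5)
The paper gives no detailed proof of this lemma --- it explicitly defers to the standard Schatz analysis --- and your overall strategy (coercive-plus-compact splitting from Lemma~\ref{lemma-Gok}, Schatz's argument for the discrete inf-sup and $H^1$ quasi-optimality, interpolation error estimates on the quasi-uniform mesh, and Aubin--Nitsche duality for the lower norms) is exactly the argument being invoked. The isomorphism claim and the $s=1$ estimate are handled correctly.

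There is, however, a genuine gap in your duality step for the low end of the range $s\in[0,1]$, and your write-up is internally inconsistent about it. You first assert that the adjoint solution $w=G_\Gamma^* e$ with data $e\in L^2(\Gamma)$ lies in $H^2(\Gamma)$, which is what you would need to gain a full power of $H$; but Assumption~\ref{G} (and hence its adjoint version) only covers $-1\leq s\leq -1/2$, so the most it yields for $L^2\hookrightarrow H^{-1/2}$ data is $w\in H^{3/2}(\Gamma)$ with $\Vert w\Vert_{H^{3/2}}\leq C\Vert e\Vert_{L^2}$ --- which is precisely what you then say in your final paragraph is ``exactly the regularity needed.'' It is not: with $w\in H^{3/2}$ only, $\inf_{\xi_H}\Vert w-\xi_H\Vert_{H^1}\leq CH^{1/2}\Vert w\Vert_{H^{3/2}}$, and the duality argument delivers $\Vert e\Vert_{L^2}\leq CH^{1/2}\Vert e\Vert_{H^1}\leq CH^{t-1/2}\Vert G_\Gamma\lambda\Vert_{H^t}$, i.e.\ the rate $H^{t-s}$ only for $s\geq 1/2$ (by duality against $H^{-s}$ data for $s\in[1/2,1]$, or by interpolation with the $s=1$ bound). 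To obtain the claimed rate all the way down to $s=0$ you must either extend Assumption~\ref{G} to $s=0$ (full $L^2\to H^2$ regularity of $G_\Gamma$, which holds for smooth $\Gamma$ and $\beta$ but is not what the paper assumes) or accept the weaker rate for $s<1/2$. Note that the only place the lemma is used downstream (Lemma~\ref{Ahconv} and Theorem~\ref{lemma-E!sol:semidiscrGIBCfwdprob_vec}) is at $s=1/2$, $t=3/2$, where your machinery with the $H^{3/2}$ adjoint regularity does suffice; so you should either restrict the stated range of $s$ or make the extra regularity hypothesis explicit.
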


We now consider the semidiscrete counterpart of problem (\ref{prob-FwdProbV}), which consists of computing $\bfv^{H}\in H({\rm div};\Omega_R)$ that satisfies
\begin{eqnarray}
&&\hspace*{-1cm}
\int_{\Omega_R}\!\! \Big(\frac{1}{k^2} \nabla\!\cdot\!\bfv^{H,M}\,\overline{\nabla\!\cdot\!\bfw}-\bfv^{H}\!\cdot\overline{\bfw} \Big) d\bfx
+\int_{ \Sigma _R} \!\!\! N_R(\bfv^{H}\!\cdot\!\bfnu)\,\overline{\bfw\!\cdot\!\bfnu}\,dS
-\int_{\Gamma } {G^H_{\Gamma\! }} (\bfv^{H}\!\cdot\!\bfnu) \, \overline{\bfw\!\cdot\!\bfnu}\,dS \, = \nonumber \\
&&\qquad\qquad
=\, \int_{\Gamma } { G^H_{\Gamma \! }  g} \,\overline{\bfw\!\cdot\!\bfnu}\,dS\qquad\mbox{ for all }\bfw\in H({\rm{}div};\Omega_R) \, .
\label{prob-FwdProbV_HM}
\end{eqnarray}
As at continuous level, it is useful to associate to the left hand side of (\ref{prob-FwdProbV_HM}) the sesquilinear form
 $a^H: H({\rm{}div};\Omega_R) \times H({\rm{}div};\Omega_R) \to \mathbb{C}$ defined by
\begin{equation}
  a^H(\mathbf{u},\mathbf{w})
=\displaystyle\int_{\Omega_R}\! (\frac{1}{k^2}\,\nabla\cdot\mathbf{u}\,\overline{\nabla\cdot\bfw}-\bfv\cdot\overline{\bfw})\,d\bfx 
  +\int_{\Sigma _R}\!\! N_R(\mathbf{u}\cdot\bfnu)\,\overline{\bfw\cdot\bfnu}\,dS- \int_{\Gamma }\!\! G^H_{\Gamma  \! }(\mathbf{u}\cdot\bfnu)\,\overline{\bfw\cdot\bfnu}\,dS\, , \label{aHdef}
\end{equation}
which is just the semidiscrete counterpart of $a(\cdot,\cdot )$.
%

Moreover,  to study the problem (\ref{prob-FwdProbV_HM}), we define the operator $A^H:\Hdiv\to\Hdiv'$ by
\[
\llangle A^H  \mathbf{u} , \overline{\bfw}\rrangle=a^H(\mathbf{u} ,\bfw)\quad \mbox{ for all } \mathbf{u} , \bfw\in \Hdiv \, .
\]
We can now show that $A^H$ converges to $A$ in norm.
\begin{lemma}\label{Ahconv}
For each $H>0$ sufficiently small, there is a constant $C$ such that 
$$\Vert A - A^H\Vert_{\Hdiv\to\Hdiv'}\,\leq\, C H
\, .$$
\end{lemma}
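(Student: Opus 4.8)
The plan is to reduce the estimate to the convergence of the boundary operator $G_\Gamma^H$, which is already quantified in Lemma~\ref{lemma-GH}. First I would take any $\mathbf{u},\bfw\in\Hdiv$ and subtract the definitions~(\ref{adef}) and~(\ref{aHdef}): all the volume terms and the $N_R$ surface term cancel, leaving
\[
a(\mathbf{u},\bfw)-a^H(\mathbf{u},\bfw)
=-\int_{\Gamma}\big((G_\Gamma-G_\Gamma^H)(\mathbf{u}\cdot\bfnu)\big)\,\overline{\bfw\cdot\bfnu}\,dS .
\]
Hence by the definition of $A-A^H$ through the duality pairing,
\[
\|A-A^H\|_{\Hdiv\to\Hdiv'}
=\sup_{\mathbf{u},\bfw\neq\mathbf{0}}
\frac{\big|\int_{\Gamma}\big((G_\Gamma-G_\Gamma^H)(\mathbf{u}\cdot\bfnu)\big)\,\overline{\bfw\cdot\bfnu}\,dS\big|}
{\|\mathbf{u}\|_{\Hdiv}\,\|\bfw\|_{\Hdiv}} ,
\]
so everything comes down to bounding the $\Gamma$-integral.

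Next I would estimate that integral by duality on $\Gamma$, using the pairing between $H^{1}(\Gamma)$ and $H^{-1}(\Gamma)$ (or a convenient intermediate pair):
\[
\Big|\int_{\Gamma}\big((G_\Gamma-G_\Gamma^H)(\mathbf{u}\cdot\bfnu)\big)\,\overline{\bfw\cdot\bfnu}\,dS\Big|
\le \|(G_\Gamma-G_\Gamma^H)(\mathbf{u}\cdot\bfnu)\|_{H^{1}(\Gamma)}\,
\|\bfw\cdot\bfnu\|_{H^{-1}(\Gamma)} .
\]
The second factor is controlled by the normal trace theorem for $\Hdiv$, since $\bfw\cdot\bfnu\in H^{-1/2}(\Gamma)\hookrightarrow H^{-1}(\Gamma)$ with $\|\bfw\cdot\bfnu\|_{H^{-1/2}(\Gamma)}\le C\|\bfw\|_{\Hdiv}$. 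For the first factor I would apply Lemma~\ref{lemma-GH}: with $\lambda=\mathbf{u}\cdot\bfnu$, Assumption~\ref{G} gives $G_\Gamma(\mathbf{u}\cdot\bfnu)\in H^{3/2}(\Gamma)$ (taking $s=-1/2$), which is in the admissible range $t\in[1,P+1]$ because $P\geq1$, so with $s=1$ and $t=3/2$,
\[
\|(G_\Gamma-G_\Gamma^H)(\mathbf{u}\cdot\bfnu)\|_{H^{1}(\Gamma)}
\le C\,H^{1/2}\,\|G_\Gamma(\mathbf{u}\cdot\bfnu)\|_{H^{3/2}(\Gamma)}
\le C\,H^{1/2}\,\|\mathbf{u}\cdot\bfnu\|_{H^{-1/2}(\Gamma)}
\le C\,H^{1/2}\,\|\mathbf{u}\|_{\Hdiv},
\]
where the middle inequality is Assumption~\ref{G} again. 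This already gives $\|A-A^H\|\le CH^{1/2}$; to upgrade to the full power $H$ one uses a duality (Aubin–Nitsche) argument: instead of measuring the $G_\Gamma$-error in $H^1(\Gamma)$ and $\bfw\cdot\bfnu$ in $H^{-1}(\Gamma)$, split the loss symmetrically, pairing $(G_\Gamma-G_\Gamma^H)(\mathbf{u}\cdot\bfnu)$ against $\bfw\cdot\bfnu$ and moving a full power of $H$ onto each trace via the smoothing in Assumption~\ref{G} applied on both sides (exploiting $G_\Gamma^*\lambda=\overline{G_\Gamma\lambda}$ and the same smoothing for $G_\Gamma^*$ noted in the remark after Assumption~\ref{G}). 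Concretely, write the $\Gamma$-integral as a Galerkin-orthogonality-corrected quantity and estimate it in $H^{1/2}(\Gamma)\times H^{-1/2}(\Gamma)$, gaining $H^{1/2}$ from each factor via Lemma~\ref{lemma-GH} with $s=1/2$, $t=3/2$ on one side and the adjoint estimate on the other; the product is $O(H)$.

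The main obstacle is the duality step that turns the crude $H^{1/2}$ bound into the sharp $H$ bound: one must set up the Aubin–Nitsche argument correctly for the non-self-adjoint, non-coercive form defining $G_\Gamma$, which is exactly where the hypothesis that Assumption~\ref{G} also holds for $G_\Gamma^*$ is needed, together with the fact that $G_\Gamma^H$ is defined by Galerkin projection so that the error $(G_\Gamma-G_\Gamma^H)\lambda$ is orthogonal (in the appropriate sense of~(\ref{defGH})) to $S_H$. Everything else — the cancellation of all but the $\Gamma$-term, the normal trace bound in $\Hdiv$, and the application of Lemma~\ref{lemma-GH} — is routine.
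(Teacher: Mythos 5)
Your reduction to the single boundary term and your final estimate are in substance the paper's proof, but you reach them by an unnecessary detour and you misattribute where the power of $H$ comes from. The paper pairs the $\Gamma$-integral in $H^{1/2}(\Gamma)\times H^{-1/2}(\Gamma)$ from the outset: $\|\bfw\cdot\bfnu\|_{H^{-1/2}(\Gamma)}\le C\|\bfw\|_{\Hdiv}$ by the normal trace theorem, and Lemma~\ref{lemma-GH} applied with $s=1/2$, $t=3/2$ gives
\[
\|(G_\Gamma-G_\Gamma^H)(\mathbf{u}\cdot\bfnu)\|_{H^{1/2}(\Gamma)}\le C\,H^{3/2-1/2}\,\|G_\Gamma(\mathbf{u}\cdot\bfnu)\|_{H^{3/2}(\Gamma)}\le C\,H\,\|\mathbf{u}\cdot\bfnu\|_{H^{-1/2}(\Gamma)}\le C\,H\,\|\mathbf{u}\|_{\Hdiv},
\]
using Assumption~\ref{G} with $s=-1/2$ for the middle step. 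That is the whole proof: the full power $H$ comes from this one factor, because the exponent in Lemma~\ref{lemma-GH} is $t-s$ and the lemma is already stated for all $s\in[0,1]$, so $s=1/2$ is directly available. Your proposed Aubin--Nitsche upgrade --- introducing Galerkin orthogonality, the adjoint operator $G_\Gamma^*$, and ``gaining $H^{1/2}$ from each factor'' --- is not needed and is also miscounted: the trace factor $\|\bfw\cdot\bfnu\|_{H^{-1/2}(\Gamma)}$ contributes no power of $H$ at all, only the trace constant. (Any duality argument of the kind you describe is already absorbed into the statement of Lemma~\ref{lemma-GH}, whose proof via Schatz's analysis the paper omits; you do not need to re-run it here.) Your initial $H^1\times H^{-1}$ pairing, which only yields $H^{1/2}$, can simply be deleted.
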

\begin{proof}
For any $\mathbf{u} ,\bfw\in \Hdiv$, from the own definitions of $A$ and $A^H$  we have that
  \begin{eqnarray*}
  \vert  \llangle (A-A^H) \mathbf{u} , \overline{\bfw} \rrangle \vert &=&\vert\int_\Gamma(G^H_{\Gamma}-G_{\Gamma})(\mathbf{u}\cdot\bfnu)\,\overline{\bfw\cdot\bfnu}\, dS \, \vert \\
    &\leq&\Vert (G^H_{\Gamma}-G_{\Gamma})(\mathbf{u}\cdot\bfnu)\Vert_{H^{1/2}(\Gamma)} \, \Vert\bfw\cdot\bfnu\Vert_{H^{-1/2}(\Gamma)}\\
    &\leq&C\Vert (G^H_{\Gamma}-G_{\Gamma})(\mathbf{u}\cdot\bfnu)\Vert_{H^{1/2}(\Gamma)} \, \Vert\bfw\Vert_{H({\rm{}div};\Omega_R)}\, .
  \end{eqnarray*}
  But, by Lemma~\ref{lemma-GH} and  Assumption \ref{G}, we conclude that
\begin{eqnarray*}
  \Vert (G_{\Gamma}^H-G_{\Gamma})(\mathbf{u} \cdot\bfnu)\Vert_{H^{1/2}(\Gamma)} &\leq&  CH\,  \Vert G_{\Gamma} ( \mathbf{u} \cdot\bfnu)\Vert_{H^{3/2} (\Gamma)}\\
& \leq&  CH\,  \Vert \mathbf{u} \cdot\bfnu \Vert_{H^{-1/2} (\Gamma)}
\,\leq\, CH\, \Vert \mathbf{u} \Vert_{\Hdiv} \, .
  \end{eqnarray*}
\end{proof}

Using \cite[Theorem 10.1]{Kress} we have the following result.
\begin{theorem}\label{lemma-E!sol:semidiscrGIBCfwdprob_vec} For all $H$ sufficiently small, the operator $A^H:\Hdiv\to\Hdiv'$
  is invertible and its inverse is bounded independently of $H$.
  Suppose $\mathbf{v}$ satisfies
  (\ref{prob-FwdProbV}) and $\mathbf{v}^H$ satisfies
  (\ref{prob-FwdProbV_HM}), then there is a constant $C$ independent of
  $H$ such that
  \begin{equation}\label{eq-boundsemidiscr}
  \Vert\mathbf{v}^H-\mathbf{v}\Vert_{\Hdiv}
\, \leq\,  
C \,\Big( \Vert (G^H_{\Gamma}-G_{\Gamma})(\mathbf{v}\cdot\bfnu)\Vert_{H^{1/2}(\Gamma)} 
+ \Vert (G^H_{\Gamma}-G_{\Gamma})g\Vert_{H^{1/2}(\Gamma)}\Big) \, .
  \end{equation}
\end{theorem}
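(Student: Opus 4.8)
The plan is to treat Theorem~\ref{lemma-E!sol:semidiscrGIBCfwdprob_vec} as a perturbation result built on the norm convergence $A^H\to A$ established in Lemma~\ref{Ahconv}. By Theorem~\ref{th-FwdProbVok} the operator $A:\Hdiv\to\Hdiv'$ associated with $a(\cdot,\cdot)$ in (\ref{adef}) is an isomorphism, so $A^{-1}$ exists and is bounded. Since Lemma~\ref{Ahconv} gives $\Vert A-A^H\Vert_{\Hdiv\to\Hdiv'}\le CH$, once $H$ is small enough that $\Vert A^{-1}\Vert\,\Vert A-A^H\Vert\le\frac{1}{2}$, the Neumann-series argument of \cite[Theorem 10.1]{Kress} shows that $A^H$ is invertible with $\Vert (A^H)^{-1}\Vert_{\Hdiv'\to\Hdiv}\le 2\,\Vert A^{-1}\Vert$, a bound independent of $H$. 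This proves the invertibility claim and the uniform boundedness of $(A^H)^{-1}$.

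For the error estimate I would identify the right-hand sides of (\ref{prob-FwdProbV}) and (\ref{prob-FwdProbV_HM}) with functionals $F,F^H\in\Hdiv'$ defined by $\llangle F,\overline{\bfw}\rrangle=\int_\Gamma G_\Gamma(g)\,\overline{\bfw\cdot\bfnu}\,dS$ and $\llangle F^H,\overline{\bfw}\rrangle=\int_\Gamma G^H_\Gamma(g)\,\overline{\bfw\cdot\bfnu}\,dS$, so that $A\mathbf{v}=F$ and $A^H\mathbf{v}^H=F^H$. Subtracting and inserting $A^H\mathbf{v}$ gives
\[
A^H(\mathbf{v}^H-\mathbf{v}) = (F^H-F) + (A-A^H)\mathbf{v},
\]
and the uniform bound on $(A^H)^{-1}$ then yields
\[
\Vert\mathbf{v}^H-\mathbf{v}\Vert_{\Hdiv}\le 2\,\Vert A^{-1}\Vert\,\bigl(\Vert F^H-F\Vert_{\Hdiv'}+\Vert (A-A^H)\mathbf{v}\Vert_{\Hdiv'}\bigr).
\]

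It then remains to bound the two terms on the right by surface quantities. Using the continuity of the normal trace $\bfw\mapsto\bfw\cdot\bfnu$ from $\Hdiv$ into $H^{-1/2}(\Gamma)$ together with the $H^{1/2}(\Gamma)$--$H^{-1/2}(\Gamma)$ duality, one gets $|\llangle F^H-F,\overline{\bfw}\rrangle|\le C\,\Vert (G^H_\Gamma-G_\Gamma)g\Vert_{H^{1/2}(\Gamma)}\,\Vert\bfw\Vert_{\Hdiv}$, hence $\Vert F^H-F\Vert_{\Hdiv'}\le C\,\Vert (G^H_\Gamma-G_\Gamma)g\Vert_{H^{1/2}(\Gamma)}$; and the term $\Vert (A-A^H)\mathbf{v}\Vert_{\Hdiv'}\le C\,\Vert (G^H_\Gamma-G_\Gamma)(\mathbf{v}\cdot\bfnu)\Vert_{H^{1/2}(\Gamma)}$ is estimated exactly as in the first chain of inequalities in the proof of Lemma~\ref{Ahconv}. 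Combining these gives (\ref{eq-boundsemidiscr}).

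The main obstacle in this line of argument is essentially already resolved before we reach the theorem: it is the operator-norm estimate $\Vert A-A^H\Vert\le CH$ of Lemma~\ref{Ahconv}, which in turn rests on the smoothing Assumption~\ref{G} and the approximation estimate of Lemma~\ref{lemma-GH}. Within the present proof the only point requiring care is that the constant produced by the perturbation step is genuinely independent of $H$, which is exactly what the smallness condition $\Vert A^{-1}\Vert\,\Vert A-A^H\Vert\le\frac{1}{2}$ and \cite[Theorem 10.1]{Kress} guarantee. The remainder is a routine Strang-type computation, and it is somewhat simpler than usual because the space in $\Omega_R$ is not discretized, so no interpolation error of $\mathbf{v}$ enters and $\mathbf{v}^H$ is compared directly with $\mathbf{v}$ itself.
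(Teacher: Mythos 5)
Your proposal is correct and follows essentially the same route as the paper: invertibility of $A^H$ with an $H$-uniform bound via Lemma~\ref{Ahconv} and \cite[Theorem 10.1]{Kress}, then the identity $A^H(\mathbf{v}^H-\mathbf{v})=(A-A^H)\mathbf{v}+f^H-f$ together with the duality/trace estimates from the proof of Lemma~\ref{Ahconv} to bound both terms. No gaps.
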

\begin{proof}
  Recall that, as shown in the proof of Theorem \ref{th-FwdProbVok},  the operator $A: \Hdiv\to\Hdiv ' $ is an isomorphism. Further, Lemma \ref{Ahconv} shows the convergence of $A^H$ to $A$ in the norm $\Vert\cdot\Vert _{\Hdiv\to\Hdiv '}$. Then Theorem 10.1 of \cite{Kress} shows that, for $H$ small enough, $(A^H)^{-1}$ exists and is uniformly bounded in $H$. \\
Finally, to deduce the error bound (\ref{eq-boundsemidiscr}), we notice that
$$A^H (\bfv ^H-\bfv)\, = \, (A-A^H) \bfv + f^H - f
 \quad \mbox{  in } \Hdiv ' \, ,$$
where $A\bfv=f$ and $A^H\bfv^H=f^H$ in$\Hdiv '$:
\[
f^H(\mathbf{w})=\int_{\Gamma}G^H_\Gamma g\, \mathbf{w\cdot\bfnu}\,dS 
\quad\text{and}\quad 
f(\mathbf{w})=\int_{\Gamma}G_\Gamma g\, \mathbf{w}\cdot\bfnu \,dS \qquad\forall \mathbf{w}\in \Hdiv
\, .
\]
But we can estimate $\Vert (A-A^H)\bfv \Vert _{\Hdiv '} $ as in the proof of Lemma \ref{Ahconv}, which gives us the first term on the right hand side of (\ref{eq-boundsemidiscr}). Similarly, we can bound $\Vert f^H - f \Vert _{\Hdiv '}$, which gives us the second term on the right hand side of (\ref{eq-boundsemidiscr}).
  \end{proof}

Our final result of this section shows that $\bfv^H$ is smooth enough that the trace of $\nabla\cdot\bfv^H$ is well defined on line segments (edges of elements) in $\Omega _R$.

\begin{lemma}
  For each  $0\leq s<1/2$, there exists a constant $C$ (depending on $s$ but independent of $g\in H^{-1}(\Gamma)$) such that the solution $\mathbf{v}^H\in\Hdiv $ of (\ref{prob-FwdProbV_HM}) satisfies
$$
  \Vert\mathbf{v}^H\Vert_{H^{1/2+s}(\Omega_R)} \leq CH^{-s}\Vert g\Vert_{H^{-1}(\Gamma)} \quad\mbox{ and } \quad 
  \Vert \nabla\cdot\mathbf{v}^H\Vert_{H^{1}(\Omega_R)} \leq  C\Vert g\Vert_{H^{-1}(\Gamma)}\, .
$$
\end{lemma}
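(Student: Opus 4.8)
The plan is to exploit two facts: first, that $\nabla\cdot\mathbf{v}^H$ solves a scalar Helmholtz equation in $\Omega_R$ (away from the boundaries) with boundary data controlled by $G_\Gamma^H$ and $N_R$; and second, that the regularity of the discrete operator $G_\Gamma^H$ is what limits the smoothness of $\mathbf{v}^H$ near $\Gamma$, giving rise to the negative power of $H$. Concretely, I would first test (\ref{prob-FwdProbV_HM}) against suitable $\bfw$ to identify the scalar structure. As in the uniqueness proof (Lemma \ref{lemma-FwdProbVok}), one uses a Helmholtz decomposition $\mathbf{v}^H=\nabla u^H+\boldsymbol{\psi}^H$; choosing $\bfw\in\mathcal{C}_0^\infty(\Omega_R)^2$ forces $\boldsymbol{\psi}^H=\mathbf{0}$ and shows that $w^H:=\tfrac1{k^2}\nabla\cdot\mathbf{v}^H$ satisfies $\Delta w^H+k^2 w^H = 0$ in $\Omega_R$ (up to the usual constant bookkeeping), together with the boundary relations $w^H = -N_R(\mathbf{v}^H\cdot\bfnu)$ on $\Sigma_R$ and, crucially, $w^H = -G_\Gamma^H(\mathbf{v}^H\cdot\bfnu + g)$ on $\Gamma$. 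In other words $w^H=\nabla\cdot\mathbf{v}^H/k^2$ is a Helmholtz solution whose Dirichlet trace on $\Gamma$ lies in the finite element space $S_H\subset H^1(\Gamma)$ (indeed piecewise polynomial on $\mathcal{T}_H^\Gamma$), and whose Dirichlet trace on $\Sigma_R$ is smooth.

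Next I would quantify these traces. From Theorem \ref{lemma-E!sol:semidiscrGIBCfwdprob_vec} together with the stability of $(A^H)^{-1}$ we already have $\Vert\mathbf{v}^H\Vert_{\Hdiv}\leq C\Vert g\Vert_{H^{-1}(\Gamma)}$ (using Assumption \ref{G} to bound the right-hand side of (\ref{prob-FwdProbV_HM})), hence $\Vert\mathbf{v}^H\cdot\bfnu\Vert_{H^{-1/2}(\Gamma)}\leq C\Vert g\Vert_{H^{-1}(\Gamma)}$ and likewise on $\Sigma_R$. By Assumption \ref{G} the \emph{continuous} $G_\Gamma(\mathbf{v}^H\cdot\bfnu+g)$ lies in $H^{3/2}(\Gamma)$ with norm bounded by $C\Vert g\Vert_{H^{-1}(\Gamma)}$. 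The discrete trace $G_\Gamma^H(\mathbf{v}^H\cdot\bfnu+g)$ differs from this by the $G_\Gamma-G_\Gamma^H$ error, which by Lemma \ref{lemma-GH} is $O(H^{3/2-s})$ in $H^s$; but, being a piecewise polynomial of bounded $H^{1}$-norm on a quasi-uniform mesh, an inverse estimate gives $\Vert G_\Gamma^H(\cdot)\Vert_{H^{1/2+s}(\Gamma)}\leq CH^{-s}\Vert G_\Gamma^H(\cdot)\Vert_{H^{1/2}(\Gamma)}\leq CH^{-s}\Vert g\Vert_{H^{-1}(\Gamma)}$ for $0\le s<1/2$. So the Dirichlet trace of $w^H$ on $\Gamma$ is bounded in $H^{1/2+s}(\Gamma)$ by $CH^{-s}\Vert g\Vert_{H^{-1}(\Gamma)}$, and on $\Sigma_R$ it is bounded in $H^{1/2}(\Sigma_R)$ (in fact better, since $N_R$ smooths) uniformly in $H$. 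Elliptic regularity for the Helmholtz equation with a Dirichlet trace in $H^{1/2+s}(\partial\Omega_R)$ on the smooth domain $\Omega_R$ then yields $w^H=\nabla\cdot\mathbf{v}^H/k^2\in H^{1+s}(\Omega_R)$, and in particular, taking $s$ up to (but below) $1/2$ — or simply observing that the trace of $\nabla\cdot\mathbf{v}^H$ lies in $H^1(\Sigma_R)$ and, via $G_\Gamma^H\in S_H\subset H^1(\Gamma)$, in $H^1(\Gamma)$ — gives the second claimed bound $\Vert\nabla\cdot\mathbf{v}^H\Vert_{H^1(\Omega_R)}\leq C\Vert g\Vert_{H^{-1}(\Gamma)}$. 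For the first bound, from $\nabla\nabla\cdot\mathbf{v}^H+k^2\mathbf{v}^H=0$ we get $\mathbf{v}^H=-\tfrac1{k^2}\nabla(\nabla\cdot\mathbf{v}^H)$, so $\Vert\mathbf{v}^H\Vert_{H^{1/2+s}(\Omega_R)}\leq C\Vert\nabla\cdot\mathbf{v}^H\Vert_{H^{3/2+s}(\Omega_R)}$; but controlling $\nabla\cdot\mathbf{v}^H$ in $H^{3/2+s}$ requires its Dirichlet trace in $H^{1+s}(\Gamma)$, which is exactly $H^{-s}$ times $\Vert g\Vert_{H^{-1}(\Gamma)}$ after the inverse estimate, producing the factor $H^{-s}$.

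The main obstacle is making the regularity-versus-inverse-estimate trade-off clean. The continuous solution operator $G_\Gamma$ smooths (Assumption \ref{G}), so there is no loss there; the loss of a power of $H$ comes entirely from the fact that $G_\Gamma^H$ maps into a piecewise polynomial space, whose elements cannot be controlled in $H^{1/2+s}$ uniformly — one must pay $H^{-s}$ via the quasi-uniform inverse inequality $\Vert\xi_H\Vert_{H^{1/2+s}(\Gamma)}\leq CH^{-s}\Vert\xi_H\Vert_{H^{1/2}(\Gamma)}$ valid on $\mathcal{T}_H^\Gamma$. I would need to be careful that this inverse estimate holds at the fractional exponent $1/2+s$ (it does, for $0\le 1/2+s<3/2$ on quasi-uniform meshes of piecewise polynomials, by interpolation between integer-order inverse estimates), and that the bootstrap from boundary trace regularity to interior regularity for the Helmholtz operator on $\Omega_R$ is uniform in $H$ (it is, since $\Omega_R$ is fixed and smooth). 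A subtlety worth stating explicitly is that the Dirichlet trace of $\nabla\cdot\mathbf{v}^H$ is split across $\Gamma$ and $\Sigma_R$, so the interior estimate must combine an $H^{1/2+s}(\Gamma)$ bound with only an $H^{1/2}(\Sigma_R)$ (or better) bound — but since $0\le s<1/2$ the global trace regularity is governed by the rougher piece on $\Gamma$, and the argument closes. The $H^{-1}(\Gamma)$-dependence of all constants is inherited simply by tracing $\Vert g\Vert_{H^{-1}(\Gamma)}$ through each inequality above, using Assumption \ref{G} at the very first step to absorb the source term $G_\Gamma^H g$.
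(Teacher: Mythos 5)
Your proposal follows essentially the same route as the paper: reduce to the scalar potential $u^H$ (your $w^H$ is just $-u^H$) solving a Helmholtz problem whose Dirichlet trace on $\Gamma$ lies in $S_H\subset H^{1+s}(\Gamma)$, pay the factor $H^{-s}$ through the quasi-uniform inverse estimate to control that trace in $H^{1+s}(\Gamma)$, and conclude by elliptic regularity together with the uniform $\Hdiv$-stability of the semidiscrete problem. The only cosmetic difference is that the paper extends $u^H$ outside $B_R$ via the inverted NtD map and quotes a priori estimates for the exterior Dirichlet problem, rather than treating the mixed-boundary interior problem on $\Omega_R$ directly as you do.
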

\begin{proof}
  Following the proof of the uniqueness result in Lemma \ref{lemma-FwdProbVok} and replacing there the operator $G_{\Gamma}$ by its discrete counterpart $G^H_{\Gamma}$,   we see that $\bfv^H=\nabla u^H$ where $u^H\in H^1 (\Omega_R)$ satisfies
  $$
\begin{array}{rcll}
    \Delta u^H+k^2u^H&=&0 &\mbox{ in }\Omega_R\, ,\\
\displaystyle -u^H+N_R(\frac{\partial u^H}{\partial \boldsymbol{\nu} })&=&0 &\mbox{ on }\Sigma_R\, ,\\[1ex]
\displaystyle u^H+G_\Gamma^H(\frac{\partial u^H}{\partial\boldsymbol{\nu} } )&=&G_\Gamma^H g\quad & \mbox{ on }\Gamma \, .
  \end{array}
$$
Since $S_H$ consists of continuous piecewise polynomials, 
we know that for each $0\leq s <1/2$ it holds $S_H \subseteq H^{1+s}(\Gamma) $ and, in particular,  $u^H|_{\Gamma}\in G_{\Gamma}^H (H^{-1} (\Gamma )) \subseteq S_H\subset H^{1+s}(\Gamma)$. Moreover, $N_R$ can be inverted to give the classical Dirichlet-to-Neumann map, so that $u^H$ can be extended to the exterior of $B_R$ as a radiating solution of Helmholtz equation in the whole $\Omega$. Identifying such extension with $u^H$ itself, we have that $u^H$ satisfies the exterior Dirichlet problem for Helmholtz equation in $\Omega$ and Dirichlet data $u^H|_{\Gamma} \in H^{1+s}(\Gamma)$. Hence, using a priori estimates for the exterior Dirichlet problem, $u^H\in H^{3/2+s}_{loc}(\Omega)$ 
and it satisfies 
$$
 \Vert u^H\Vert_{H^{3/2+s}(\Omega_R)}\, \leq \, \Vert u^H\Vert_{H^{3/2+s}_{loc}(\Omega )}\, \leq \, C\Vert u^H\Vert_{H^{1+s}(\Gamma)} \, = \,C\,\Vert G_\Gamma^H(\frac{\partial u^H}{\partial\boldsymbol{\nu}})-G_\Gamma^H g \Vert_{H^{1+s}(\Gamma)}\, .
$$
By our quasi-uniformity assumption on the mesh $\mathcal{T}_H^{\Gamma}$, we know 
that a standard inverse estimate holds for $S_H$ and hence
$$
\Vert u^H\Vert_{H^{3/2+s}(\Omega_R)}\, \leq \, CH^{-s}\,\Vert G_\Gamma^H(\frac{\partial u^H}{\partial\boldsymbol{\nu}}\! )-G_\Gamma^H g\Vert_{H^{1}(\Gamma)} \, \leq \, CH^{-s}\left(\Vert \frac{\partial u^H}{\partial\boldsymbol{\nu}}
\Vert_{H^{-1/2}(\Gamma)}+\Vert g\Vert_{H^{-1}(\Gamma)}\right).
$$
Now note that $\bfv ^H=\nabla u^H$ in $\Omega _R$, so that
$$
\Vert \bfv ^H\Vert_{H^{1/2+s}(\Omega _R)} \,\leq \, \Vert u^H\Vert_{H^{3/2+s}(\Omega_R)}\, \leq \, CH^{-s}\left(\Vert \bfv^H\!\cdot\!\bfnu\Vert_{H^{-1/2}(\Gamma)}+\Vert g\Vert_{H^{-1}(\Gamma)}\right) .
$$
Similarly, $\nabla\cdot\bfv^H=\Delta u^H=-k^2u^H$ and we deduce that 
  \begin{eqnarray*}
  \Vert \nabla\cdot\bfv^H\Vert_{H^1(\Omega_R)}&=&\, k^2\, \Vert u^H\Vert_{H^1(\Omega_R)}
  \leq \,C\,\Vert G_\Gamma^H(\frac{\partial u^H}{\partial\boldsymbol{\nu}}\! )-G_\Gamma^H g \Vert_{H^{1/2}(\Gamma)}
\\&\leq & C\left(\Vert \bfv^H\!\!\cdot\!\bfnu\Vert_{H^{-1/2}(\Gamma)}+\Vert g\Vert_{H^{-1}(\Gamma)}\right)\,.
  \end{eqnarray*}
We complete the estimate using the well-posedness of the semidiscrete problem and the continuity of normal traces from $\Hdiv$ into $H^{-1/2}(\Gamma )$.
\end{proof}

%

\section{A Trefftz DG Method}\label{trefftz}

We want to use a Trefftz discontinuous Galerkin method to approximate the semidiscrete problem (\ref{prob-FwdProbV_HM}).
In particular, in the scalar case, typical examples of Trefftz spaces for the Helmholtz problems are linear combinations of plane waves in different directions, or linear combinations of circular/spherical waves. The gradient of such solutions provides a basis for the vector problem. In the following we 
seek a Trefftz Discontinuous Galerkin (TDG) method to approximate the semidiscrete vector formulation of the problem (\ref{prob-FwdProbV_HM}). 

Let us introduce ${\cal T}_h$ a triangular mesh of $\Omega _R$,
possibly featuring hanging nodes, and allowing triangles to have
curvilinear edges if they share an edge with $\Gamma$ or $\Sigma_R$.
We write $h$ for the mesh width of ${\cal T}_h$, that is, $h = \max
_{K\in {\cal T}_h} h_K$ where $h_K$ the diameter of triangle $K$. On
${\cal T}_h$ we will define our TDG method. To this
  end,  we denote by ${\cal F}_h = \cup _{K\in {\cal T}_h } \partial
K$ the skeleton of the mesh ${\cal T}_h$, and set ${\cal F}_h^R =
{\cal F}_h \cap \Sigma_R$, ${\cal F}_h^{\Gamma} = {\cal F}_h \cap \Gamma $,
and ${\cal F}_h^I = {\cal F}_h \cap \Omega _R = {\cal F}_h \setminus (
{\cal F}_h^R \cup {\cal F}_h^{\Gamma}) $.  We also introduce some standard
DG notation: Write $\bfnu^+$, $\bfnu^-$ and $\bfnu_K$ for the exterior
unit normals on $\partial K ^+$, $\partial K ^-$ and $\partial K$,
respectively, where $K^+,K^-,K \in \mathcal{T}_h$. Let $u$ and
$\mathbf{v}$ denote a piecewise smooth scalar function and vector
field respectively on $\mathcal{T}_h$. On any edge $e\in
\mathcal{F}_h^I$ with $e=\partial K ^+\cap\partial K ^-$, where
$K^+,K^-\in \mathcal{T} _h$, we define
\begin{itemize}
\item the averages: $\avg{u} := \frac{1}{2} (u^-+u^+)$, $\avg{\mathbf{v} } := \frac{1}{2} (\bfv^-+\bfv^+)$; 
\item the jumps: $\jmp{ u }_{\bfnu}:=  u^-\bfnu^-+u^+\bfnu^+$, $\jmp{\mathbf{v} }_{\bfnu} :=       \mathbf{v }^-\cdot \bfnu^-+     \mathbf{v} ^+\cdot \bfnu^+$.
\end{itemize}
Furthermore, we will denote by $\nabla _{\! h}$ the elementwise application of $\nabla$, and by $\partial_{\bfnu,h}=\bfnu\cdot\nabla _{\! h} $ the element-wise application of $\partial_{\bfnu}$ on $\partial \Omega_R =\Gamma  \cup \Sigma_R$.

We next introduce a suitable Trefftz space to approximate the
semidiscrete problem written in vector form as
(\ref{prob-FwdProbV_HM}).
To this end, we introduce the vector TDG  spaces with local
number of plane wave directions $\{ p_K \}_{K \in\mathcal{T}_h}$, $p_K>3$, given by
$$
\mathbf{W}_h = \left\{ \bfw_{h} \in L^2( \Omega _R)^2 \, ; \,\, \bfw_h|_K \in \mathbf{W}_{p_K}(K)\,\,\forall K\in\mathcal{T} _h \right\} ,
$$
where each $\mathbf{W}_{p_K} ( K )$ \pmr{is the span of a set of $p_K$  linearly independent vector functions on} $K$ that enjoy the Trefftz property:
$$
\grad\ddiv \bfw_h + k^2 \bfw_h = 0 \quad\forall \bfw_h\in \mathbf{W}_{p_K} ( K ) \, .
$$ Then, for any $\mathbf{u}_h \in \mathbf{W}_h$ and an arbitrary
element $K\in\mathcal{T}_h$, we have the following integration by
parts formula:
\begin{eqnarray*}
0&=&\int_K(\grad\ddiv \mathbf{u}_h+k^2 \mathbf{u}_h)\cdot\overline{\bfw}_h\,d\bfx\\&=&
\int_K\! (-\ddiv \mathbf{u}_h\ddiv \overline{\bfw}_{h}+k^2\mathbf{u}_h\cdot\overline{\bfw}_h)\,d\bfx+\int_{\partial K}\!\!\ddiv \mathbf{u}_h\, \overline{\bfw}_h\cdot\bfnu_ K\,dS \, .
\end{eqnarray*}
Integrating by parts one more time
\[
\int_K( \mathbf{u}_h\cdot(\grad\ddiv \overline{\bfw}_h+k^2\overline{\bfw}_h)\,d\bfx
+\int_{\partial K}\!\! \ddiv \mathbf{u}_h\, \overline{\bfw}_h\cdot\bfnu_K\,dS-
\int_{\partial K}\!\!\mathbf{u}_h\cdot\bfnu_K\ddiv\overline{\bfw}_h\,dS \, =\, 0 \, .
\]
Now assuming that ${\bfw}_h\in \mathbf{W}_{p_K} (K)$,
we obtain the master equation that the fluxes are linked by
\[
\int_{\partial K} \!\! \ddiv \mathbf{u} _h \, \overline{\bfw}_h\!\cdot\!\bfnu_K\,dS-
\int_{\partial K}\!\! \mathbf{u} _h\! \cdot\!\bfnu_K\,\ddiv\overline{\bfw}_h\,dS\, =\, 0\, .
\]
This needs to be generalized to be applied to discontinuous trial and
test functions in $\mathbf{W}_h$.  Let $\widehat{\ddiv\mathbf{u}_h}$
and $\hat{\mathbf{u}}_h$ denote numerical fluxes computed from the
appropriate functions on either side of an edge $e$ in the mesh (or on
one side if the edge is on the boundary), as we will describe next.
We then write the extended master equation
\[
\int_{\partial K} \!\! \widehat{\ddiv \mathbf{u}_h} \, \overline{\bfw}_h\cdot\bfnu_K\,dS-
\int_{\partial K} \!\! \hat{\mathbf{u}}_h\cdot\bfnu_K \, \ddiv\overline{\bfw}_h\,dS\, =\, 0\, .
\]
Adding over all triangles in the mesh, $K\in \mathcal{T}_h$, we may write the sum using the sets ${\cal F}_h^R$, ${\cal F}_h^I$ and ${\cal F}_h^{\Gamma}$ as defined previously and obtain:
\begin{equation}
\begin{array}{l}
\displaystyle \int _{\mathcal{F} ^I_h}  
(\widehat{\ddiv \mathbf{u}}_h\, \jmp{\overline{\bfw}_h}_{\bfnu}-\hat{\mathbf{u}}_h\cdot\jmp{\ddiv\overline{\bfw}_h}_{\bfnu})\,dS
\\[1ex]
\hspace*{1cm} +\displaystyle \int _{\mathcal{F} ^R_h} 
 (\widehat{\ddiv \mathbf{u}}_h\, \overline{\bfw}_h\cdot\bfnu-\hat{\mathbf{u}}_h\cdot\bfnu \,\ddiv\overline{\bfw}_h) \,dS
\\[1ex]
\hspace*{2cm} -\displaystyle \int _{\mathcal{F} ^{\Gamma}_h}  
(\widehat{\ddiv \mathbf{u}}_h\,  \overline{\bfw}_h\cdot\bfnu-\hat{\mathbf{u}}_h\cdot\bfnu\,\ddiv\overline{\bfw}_h ) \,dS \, =\, 0 \, ,
\end{array}\label{dgsum}
\end{equation}
where the negative sign appears on the last term because of the use of an outward pointing normal on $\Gamma$.

Defining numerical fluxes
using conjugate variables, \pmr{we are led (see also \cite{buf07,git07})} to the following fluxes on edges in ${\cal F}_h^I$:
\begin{eqnarray*}
\widehat{\ddiv \mathbf{u}_h}=\avg{\ddiv\mathbf{u}_h}+ik\alpha_1 \, \jmp{\mathbf{u}_h}_{\bfnu}\, ,\\
\hat{\mathbf{u}}_h=\avg{\mathbf{u}_h}+\frac{\alpha_2}{ik}\,\jmp{\ddiv\mathbf{u}_h}_{\bfnu} \, .
\end{eqnarray*}
Here $\alpha_1$ and $\alpha_2$ \pmr{are strictly positive real numbers on  each edge $e\in \mathcal{F}_h^I$. 
For the Ultra Weak Variational Formulation that we usually use, $\alpha_1=\alpha_2=1/2$~\cite{cessenat03}.  More generally they could be mesh dependent~\cite{HMPhp,git07}.  Since our numerical results are for constant $\alpha_1$ and $\alpha_2$ we shall not investigate these more general cases further.}

For  the edges on the outer boundary, ${\cal F}_h^R$, following \cite{shelvean_phd} we take
\begin{eqnarray*}
\widehat{\ddiv \mathbf{u}_h}&=&-k^2N_R(\mathbf{u}_h\cdot\bfnu)+\delta{i}k\, N_R^{*}(\ddiv \mathbf{u}_h+k^2N_R(\mathbf{u}_h\!\cdot\!\bfnu)) \, ,\\
\hat{\mathbf{u}}_h&=&\mathbf{u}_h+\frac{\delta}{ik}\,\big(\ddiv \mathbf{u}_h+k^2N_R (\mathbf{u}_h\!\cdot\!\bfnu)\big) \bfnu \, .
\end{eqnarray*}
where $N_R^{*}$ is the $L^2(\Sigma_R)$-adjoint of $N_R$, and $\delta>0$ is a parameter to be chosen.  

Furthermore, for edges on the impedance boundary, ${\cal F}_h^D$, we consider
 \begin{eqnarray*}
\widehat{\ddiv \mathbf{u}}_h&=&-k^2G_\Gamma^H(\mathbf{u}_h\cdot\bfnu+g)-ik\,\tau\, G^{H,*}_{\Gamma} (\ddiv \mathbf{u}_h+k^2G_H(\mathbf{u}_h\!\cdot\!\bfnu +g))\, ,\\
\hat{\mathbf{u}}_h&=&\mathbf{u}_h-\frac{\tau}{ik}\, \big(\ddiv \mathbf{u}_h+k^2G_H(\mathbf{u}_h\cdot\bfnu+g)\big) \bfnu\, .
\end{eqnarray*}
where $G_{\Gamma}^{H,*}$ is the $L^2(\Gamma )$-adjoint of $G_{\Gamma}^H$, and $\tau>0$ is a parameter to be chosen. Note the sign change compared to the  fluxes on the outer boundary $\Sigma _R$ because of the outward pointing $\bfnu$.

Using these fluxes in (\ref{dgsum}) leads us to defining the sesquilinear form
\begin{eqnarray}
&& a^H_h (\bfu,\bfw)=\int_{{\cal F}_h^I}\left(\avg{\ddiv\mathbf{u}}\overline{\jmp{\bfw}}_{\bfnu}-\avg{\mathbf{u}}\cdot\overline{\jmp{\ddiv\bfw}}_{\bfnu}\right)\,dS\nonumber\\
&&\quad -\frac{1}{ik}\int_{{\cal F}_h^I}\alpha_2\jmp{\ddiv\mathbf{u}}_{\bfnu}\cdot\overline{\jmp{\ddiv\bfw}}_{\bfnu}\,dS-\int_{\Sigma_R}\left(k^2N_R(\mathbf{u}\cdot\bfnu)\overline{\bfw\cdot\bfnu}+\mathbf{u}\cdot\bfnu\overline{\ddiv\bfw}\right)\,dS\nonumber\\
&&\quad -\frac{1}{ik}\int_{\Sigma_R}\delta(\ddiv\mathbf{u}+k^2N_R(\mathbf{u}\cdot\bfnu))\overline{(\ddiv\bfw+k^2N_R(\bfw\cdot\bfnu))}\,dS\nonumber\\
&& +\int_{\Gamma}\left(k^2G_\Gamma^H(\mathbf{u}\cdot\bfnu)\,\overline{\bfw\cdot\bfnu}+\mathbf{u}\cdot\bfnu \, \overline{\ddiv\bfw}\right)\,dS+ik\int_{{\cal F}_h^I}\alpha_1\jmp{\mathbf{u}}_{\bfnu}\overline{\jmp{\bfw}}_{\bfnu}\,dS\nonumber\\
&&\quad -\frac{1}{ik}\int_{\Gamma}\tau(\ddiv\mathbf{u}+k^2G_\Gamma^H(\mathbf{u}\cdot\bfnu))\overline{(\ddiv\bfw+k^2G_\Gamma^H(\bfw\cdot\bfnu))}\,dS \, ,
\end{eqnarray}
and the antilinear functional
\[
f^H_h(\bfw)=-\frac{1}{ik}\int_\Gamma\tau k^2 G_\Gamma^H(g)\, (\ddiv\overline{\bfw} +k^2G_\Gamma^H(\overline{\bfw}\cdot\bfnu)) \,dS+
\int_\Gamma k^2G_\Gamma^H(g)\,\overline{\bfw}\cdot\bfnu\,dS
\]
Then the discrete problem we wish to solve is to find $\mathbf{v}_h^H\in W_h$ such that
\begin{eqnarray}\label{DiscreteProblem}
a^H_h (\mathbf{v}_h^H,\bfw)&=& f_h^H (\bfw )\mbox{ for all }\bfw\in \mathbf{W}_h.
\end{eqnarray}
We start by showing that this problem has a unique solution for any $h>0$ and $k>0$ and $H$ small enough.
It is  useful to define the sesquilinear forms
\begin{eqnarray*}
a_{0,h}^H(\mathbf{u},\bfw)&=&
\int _{\mathcal{F}_h^I} 
(\avg{\ddiv\mathbf{u}} \jmp{\overline{\bfw}}_{\bfnu}-\avg{\mathbf{u}}\cdot\jmp{\ddiv\overline{\bfw}}_{\bfnu}) \,dS
\\
&&-\int_{\Sigma_R}\mathbf{u}\!\cdot\!\bfnu\,\ddiv\overline{\bfw} \,dS +\int_{\Gamma}
\mathbf{u}\!\cdot\!\bfnu\,\ddiv\overline{\bfw}\,dS \, ,
\end{eqnarray*}
and
\begin{eqnarray*}
b^H_h(\mathbf{u} ,\bfw)  &\! = \! &
ik\int_{{\cal F}_h^I}\alpha_1\jmp{\mathbf{u}}_{\bfnu}\overline{\jmp{\bfw}}_{\bfnu}\,dS-\frac{1}{ik}\int_{{\cal F}_h^I}\alpha_2\jmp{\ddiv\mathbf{u} }_{\bfnu}\cdot\overline{\jmp{\ddiv\bfw}}_{\bfnu}\,dS\nonumber\\
&& -\frac{1}{ik}\int_{\Sigma_R}\!\!\! \delta\, (\ddiv\mathbf{u}+k^2N_R(\mathbf{u}\!\cdot\!\bfnu))\overline{(\ddiv\bfw+k^2N_R (\bfw\!\cdot\!\bfnu))}\,dS
-\int_{\Sigma_R}\!\!\! k^2 N_R(\mathbf{u}\!\cdot\!\bfnu)\, \overline{\bfw\! \cdot\! \bfnu}\,dS\nonumber\\&&
+\int_{\Gamma}\!\! \!k^2\, G_\Gamma^H(\mathbf{u}\!\cdot\!\bfnu)\, \overline{\bfw\!\cdot\!\bfnu}\,dS-\frac{1}{ik}\int_{\Gamma}\! \!\! \tau\, (\ddiv\mathbf{u}+k^2G_\Gamma^H(\mathbf{u}\!\cdot\!\bfnu))\overline{(\ddiv\bfw+k^2G_\Gamma^H(\bfw\!\cdot\!\bfnu))}\,dS
\end{eqnarray*}
Obviously $a^H_h(\mathbf{u} ,\bfw)=a_{0,h}^H (\mathbf{u} ,\bfw)+b_h^H(\mathbf{u} ,\bfw)$.

We start by rewriting $a_{0,h}^H$ in an equivalent form using the DG Magic Lemma \cite{EM}.  In particular since $\bfw$ satisfies the Trefftz condition, for all $\mathbf{u},\bfw\in \mathbf{W}_h$ we have
\begin{eqnarray*}
\lefteqn{\sum_{K\in{\cal T}_h}\int_K(\ddiv\mathbf{u}\, \overline{\ddiv\bfw}-k^2\mathbf{u}\cdot\overline{\bfw})\,d\bfx=\sum_{K\in {\cal T}_h}\int_{\partial K}\mathbf{u}\cdot\bfnu\overline{\ddiv \bfw}\,dS}\\&
=&\int_{\mathcal{F}_h^I}(\avg{\mathbf{u}}\cdot\overline{\jmp{\ddiv\bfw}}_{\bfnu}+\jmp{\mathbf{u}}_{\bfnu}\overline{\avg{\ddiv\bfw}})\,dS
+\int_{\Sigma_R}\!\!\!\mathbf{u}\cdot\bfnu\, \overline{\ddiv \bfw}\,dS
-\int_{\Gamma}\mathbf{u}\cdot\bfnu\, \overline{\ddiv \mathbf{w}}\,dS\, .
\end{eqnarray*}
Using this equality in the definition of $a_{0,h}^H(\mathbf{u},\bfw)$ we see that $a_{0,h}^H(\mathbf{u},\bfw)=a_{1,h}^H(\mathbf{u},\bfw)$ where
\[
a_{1,h}^H(\mathbf{u},\bfw):=-\sum_{K\in{\cal T}_h}\int_K(\ddiv\mathbf{u}\, \overline{\ddiv\bfw}-k^2\,\mathbf{u}\cdot\overline{\bfw})\,d\bfx+ \int_{\mathcal{F}^I_h}
(\avg{\ddiv\mathbf{u}} \jmp{\overline{\bfw}}_{\bfnu}+\jmp{\mathbf{u}}_{\bfnu}\cdot\overline{\avg{\ddiv\bfw}}) \,dS\, .
\]
Then choosing $\bfw=\mathbf{u}$ we immediately have
\[
\Im(a_{0,h}^H(\mathbf{u},\mathbf{u}))
=\Im(a_{1,h}^H(\mathbf{u},\mathbf{u}))=0 \, .
\]

Turning to the sesquilinear form $b_h^H(\mathbf{u},\bfw)$ if we choose  $\bfw=\mathbf{u}$ then
\begin{eqnarray*}
b_h^H (\mathbf{u},\mathbf{u}) & \!\! = \!\! &
ik\int_{{\cal F}_h^I}\alpha_1\, |\jmp{\mathbf{u}}_{\bfnu}|^2\,dS -\frac{1}{ik}\int_{{\cal F}_h^I}\alpha_2\, |\jmp{\ddiv\mathbf{u}}_{\bfnu}|^2\,dS \nonumber\\
&&\quad -\frac{1}{ik}\int_{\Sigma_R}\!\! \delta\, |\ddiv\mathbf{u}+k^2N_R (\mathbf{u}\cdot\bfnu)|^2\,dS
-\int_{\Sigma_R}\!\! k^2\, N_R(\mathbf{u}\cdot\bfnu)\overline{\mathbf{u}\cdot\bfnu}\,dS\nonumber\\&&
+\int_{\Gamma}  k^2\, G_\Gamma^H(\mathbf{u}\cdot\bfnu)\overline{\mathbf{u}\cdot\bfnu}\,dS
-\frac{1}{ik}\int_{\Sigma_R}\!\! \tau\, |\ddiv\mathbf{u}+k^2G_\Gamma^H(\mathbf{u}\cdot\bfnu))|^2\,dS \, .
\end{eqnarray*}

Thus since $\alpha_1$, $\alpha_2$, $\delta$ and $\tau$ are real valued 
\begin{eqnarray*}
\Im (b^H_h (\mathbf{u},\mathbf{u}) &\! =\! & \int _{\mathcal{F}_h^I} 
 (k\alpha_1\, |\jmp{\mathbf{u}}_{\bfnu}|^2+\frac{\alpha_2}{k}\, |\jmp{\ddiv\mathbf{u}}_{\bfnu}|^2) \,dS
\\
&&-\Im ( \int_{\Sigma_R} \!\! k^2 N_R(\mathbf{u}\!\cdot\! {\bfnu})\overline{\mathbf{u}\!\cdot\! {\bfnu}}\,dS ) + \Im (\int_{\Gamma} k^2G^H_{\Gamma}(\mathbf{u}\!\cdot\! \bfnu)\, \overline{\mathbf{u}\!\cdot\! \bfnu}\,dS) \\
&&+\int_{\Sigma_R}\frac{\delta}{k}\,\left|\ddiv {\mathbf{u}}+k^2 N_R (\mathbf{u}\!\cdot\! {\bfnu})\right|^2\,dS
+\int_{\Gamma}\frac{\tau}{k}\,\left|\ddiv \mathbf{u}+k^2 G^H_{\Gamma} (\mathbf{u}\!\cdot \! {\bfnu})\right|^2\,dS\, .
\end{eqnarray*}
Note that, by Lemmas~\ref{lemmaNtDsign} and \ref{lemmaGIBCsign} (which is stated for $G_{\Gamma}$ but a similar reasoning shows that it also holds for $G^H_{\Gamma}$),
\[
\Im (\int_{\Sigma_R} N_R (\mathbf{u}\!\cdot\! {\bfnu})\, \overline{\mathbf{u}\!\cdot\! {\bfnu}}\,dS) 
- \Im (\int_{\Gamma} G^H_{\Gamma}(\mathbf{u}\!\cdot\! {\bfnu})\, \overline{\mathbf{u}\!\cdot\! {\bfnu}}\,dS) \, \leq\, 0 \, ,
\]
and so
\begin{eqnarray*}
\Im(b^H_h(\mathbf{u},\mathbf{u}))& \!\!\geq\!\! & \int_{\mathcal{F}^I_h}
(k\alpha_1\, |\jmp{\mathbf{u}}_{\bfnu}|^2+\frac{\alpha_2}{k}\, |\jmp{\ddiv\mathbf{u}}_{\bfnu}|^2) \,dS
\\
&&+\int_{\Sigma_R}\frac{\delta}{k}\,\left|\ddiv {\mathbf{u}}+k^2 N_R (\mathbf{u}\!\cdot\! {\bfnu})\right|^2\,dS
+\int_{\Gamma}\frac{\tau}{k}\,\left|\ddiv \mathbf{u}+k^2 G^H_{\Gamma} (\mathbf{u}\!\cdot \! {\bfnu})\right|^2\,dS\, .
\end{eqnarray*}
We may thus define the mesh-dependent semi-norm $\|\bfw\|_{DG} = \sqrt{\Im(b^H_h (\bfw,\bfw))}$
 for any function $\bfw\in\mathbf{W}^s(\mathcal{T}_h)$ where $\mathbf{W}^s(\mathcal{T}_h)$ is defined as follows and contains $\mathbf{W}_h$:
$$
\mathbf{W}^s (\mathcal{T}_h) = \{ \bfw\in L^2 (\Omega _R)^2 ; \, \bfw|_K \in H^{1/2+s}(\mathrm{div};K) \, \text{s.t. } \nabla \nabla\cdot \bfw + k^2 \bfw = 0 \,\text{in } K , \, \forall K\in\mathcal{T}_h \, \} \, ,
$$
for any $s\in\mathbb{R}$ with $s>0$.   We now have the following result.
\begin{lemma}\label{lemDGnormbound}
For any $s>0$ and all $H>0$ small enough, the semi-norm $\Vert\cdot\Vert_{DG}$ is a norm on $\mathbf{W}^s (\mathcal{T}_h)$, and
  \begin{eqnarray}\label{DGnormbound}
\Vert \mathbf{w} \Vert_{DG}^2&\geq &
\int_{\mathcal{F}_h^I} 
( k\alpha_1\, |\jmp{\mathbf{w} }_{\bfnu} |^2
+\frac{\alpha_2}{k}\, |\jmp{\ddiv\mathbf{w} }_{\bfnu} |^2) \,dS
\\
&&
+\int_{\Sigma_R}\frac{\delta}{k}\,  \left|\ddiv \mathbf{w} +k^2 \, N_R(\mathbf{w} \!\cdot\! {\bfnu})\right|^2\,dS
+\int_{\Gamma }\frac{\tau}{k} \, \left| \ddiv \mathbf{w} +k^2\, G^H_{\Gamma} (\mathbf{w} \!\cdot\! {\bfnu})\right|^2\,dS\, .\nonumber
\end{eqnarray}
\end{lemma}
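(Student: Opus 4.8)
The bound (\ref{DGnormbound}) will come essentially for free: by the defining identity $\Vert\mathbf{w}\Vert_{DG}^2=\Im(b^H_h(\mathbf{w},\mathbf{w}))$ it is nothing but the lower estimate for $\Im(b^H_h(\mathbf{u},\mathbf{u}))$ obtained just above. So the first step is to check that that computation --- carried out there for $\mathbf{u}\in\mathbf{W}_h$ --- extends verbatim to any $\mathbf{w}\in\mathbf{W}^s(\mathcal{T}_h)$: the only ingredients are the elementwise Trefftz equation, the elementwise integration-by-parts identities (the DG Magic Lemma), and the fact that the traces of $\mathbf{w}\cdot\bfnu$ and of $\ddiv\mathbf{w}$ on each element boundary are well defined, which is exactly what the local regularity $\mathbf{w}|_K\in H^{1/2+s}(\mathrm{div};K)$ built into $\mathbf{W}^s(\mathcal{T}_h)$ provides (together with the discrete analogue of Lemma~\ref{lemmaGIBCsign}, legitimate because $G^H_{\Gamma}(\mathbf{w}\cdot\bfnu)\in S_H$ is an admissible test function in (\ref{defGH})). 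Once this is noted, the right-hand side of (\ref{DGnormbound}) is a sum of non-negative terms, so $\Vert\cdot\Vert_{DG}$ is a genuine semi-norm and (\ref{DGnormbound}) holds; the substance of the lemma is then the \emph{definiteness}, and that is the only place the smallness of $H$ enters.

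To prove definiteness, the plan is to take $\mathbf{w}\in\mathbf{W}^s(\mathcal{T}_h)$ with $\Vert\mathbf{w}\Vert_{DG}=0$ and deduce $\mathbf{w}=\mathbf{0}$. Since the right-hand side of (\ref{DGnormbound}) is a sum of non-negative terms that vanishes, we read off $\jmp{\mathbf{w}}_{\bfnu}=0$ and $\jmp{\ddiv\mathbf{w}}_{\bfnu}=0$ on $\mathcal{F}_h^I$, together with
\[
\ddiv\mathbf{w}+k^2N_R(\mathbf{w}\cdot\bfnu)=0\ \text{ on }\Sigma_R,\qquad \ddiv\mathbf{w}+k^2G^H_{\Gamma}(\mathbf{w}\cdot\bfnu)=0\ \text{ on }\Gamma.
\]
The vanishing of $\jmp{\mathbf{w}}_{\bfnu}$ on the interior skeleton means precisely $\mathbf{w}\in\Hdiv$; the elementwise relation $\nabla\ddiv\mathbf{w}=-k^2\mathbf{w}\in L^2(\Omega_R)^2$ forces $\ddiv\mathbf{w}\in H^1(K)$ on each $K$, and the vanishing of $\jmp{\ddiv\mathbf{w}}_{\bfnu}$ --- i.e. continuity of $\ddiv\mathbf{w}$ across interior edges --- upgrades this to $\ddiv\mathbf{w}\in H^1(\Omega_R)$. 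Hence the local Trefftz equations assemble into the single global identity $\nabla\ddiv\mathbf{w}+k^2\mathbf{w}=\mathbf{0}$ in $\Omega_R$, with $\ddiv\mathbf{w}\in H^1(\Omega_R)$.

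With this regularity, integrating by parts globally against an arbitrary test field $\mathbf{w}'\in\Hdiv$ (the $H^1\times\Hdiv$ Green's formula, so there are no leftover interior-edge contributions) gives
\[
\int_{\Omega_R}\!\!\bigl(k^2\,\mathbf{w}\cdot\overline{\mathbf{w}'}-\ddiv\mathbf{w}\,\overline{\ddiv\mathbf{w}'}\bigr)\,d\bfx+\int_{\Sigma_R}\!\!\ddiv\mathbf{w}\,\overline{\mathbf{w}'\!\cdot\!\bfnu}\,dS-\int_{\Gamma}\ddiv\mathbf{w}\,\overline{\mathbf{w}'\!\cdot\!\bfnu}\,dS=0.
\]
Substituting the two boundary relations for $\ddiv\mathbf{w}$ and dividing by $-k^2$ shows that $\mathbf{w}$ solves the homogeneous semidiscrete problem (\ref{prob-FwdProbV_HM}), i.e. $A^H\mathbf{w}=\mathbf{0}$ in $\Hdiv'$. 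Since, by Theorem~\ref{lemma-E!sol:semidiscrGIBCfwdprob_vec}, $A^H$ is invertible for $H$ small enough, we conclude $\mathbf{w}=\mathbf{0}$, so $\Vert\cdot\Vert_{DG}$ is a norm on $\mathbf{W}^s(\mathcal{T}_h)$.

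I expect the main obstacle to be the regularity-gluing step of the second paragraph: one must verify that the two vanishing interior jumps are exactly what is needed to promote the piecewise-Trefftz field $\mathbf{w}$ to a genuine element of $\Hdiv$ with $\ddiv\mathbf{w}\in H^1(\Omega_R)$, so that the elementwise equations become a bona fide boundary value problem to which the already-established well-posedness of the semidiscrete problem applies. Once that is in place, the concluding integration by parts and the appeal to Theorem~\ref{lemma-E!sol:semidiscrGIBCfwdprob_vec} are routine, and the extension of the identity for $\Im(b^H_h)$ to $\mathbf{W}^s(\mathcal{T}_h)$ is a matter of bookkeeping about traces.
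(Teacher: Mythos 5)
Your proposal is correct and follows essentially the same route as the paper: the bound is read off from the sign computation for $\Im(b^H_h(\mathbf{w},\mathbf{w}))$ preceding the lemma (valid on $\mathbf{W}^s(\mathcal{T}_h)$ since only element-boundary traces and the sign properties of $N_R$ and $G^H_\Gamma$ are involved), and definiteness follows because vanishing jumps glue $\mathbf{w}$ into a global solution of the homogeneous semidiscrete problem, which is trivial by Theorem~\ref{lemma-E!sol:semidiscrGIBCfwdprob_vec} for $H$ small. Your second and third paragraphs simply spell out the gluing and integration-by-parts details that the paper's terse proof leaves implicit.
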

\begin{proof}
On one hand, if $\|\bfw\|_{DG}=0$ for some $\bfw\in \mathbf{W}^s
(\mathcal{T}_h)$, then $\nabla\nabla\cdot\bfw+k^2\bfw=0$ in $\Omega_R$
and $\nabla\cdot\bfw + k^2 N_R(\bfw\cdot\bfnu)=0$ on $\Sigma_R$,
$\nabla\cdot\bfw + k^2 G^H_{\Gamma}(\bfw\cdot\bfnu)=0$ on $\Gamma$.
The well-posedness of the semi-discrete problem for all $H$ small
enough, Theorem~\ref{lemma-E!sol:semidiscrGIBCfwdprob_vec}, implies that
$\bfw =\boldsymbol{0}$, so that the semi-norm $\|\cdot\|_{DG}$ is a
norm on $\mathbf{W}^s(\mathcal{T}_h)$. On the other hand, the norm
bound follows from the argument preceding the lemma.
\end{proof}

We now have the existence and uniqueness of solution for 
the discrete problem.
\begin{proposition}\label{Prop:DiscrExistUnique}
 For all $H$ small enough and any $h>0$ and $k>0$ there exists a unique solution $\mathbf{v}_h^H\in \mathbf{W}_h$ to the problem~(\ref{DiscreteProblem}) for every $g\in H^{-1}(\Gamma) $. 
 \end{proposition}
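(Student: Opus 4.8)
The plan is to exploit the finite dimensionality of $\mathbf{W}_h$, which makes existence and uniqueness of a solution to (\ref{DiscreteProblem}) equivalent: the map $\bfw_h \mapsto a_h^H(\bfw_h,\cdot)$ is linear between the finite-dimensional spaces $\mathbf{W}_h$ and its dual (of equal dimension), so it suffices to show that the homogeneous problem has only the trivial solution. Thus, if $\mathbf{v}_1,\mathbf{v}_2\in\mathbf{W}_h$ both solve (\ref{DiscreteProblem}) for the same $g$, their difference $\bfw_h:=\mathbf{v}_1-\mathbf{v}_2$ satisfies $a_h^H(\bfw_h,\bfw)=0$ for all $\bfw\in\mathbf{W}_h$, and the goal is to deduce $\bfw_h=\boldsymbol{0}$. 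Note that the antilinear functional $f_h^H$ is well defined on $\mathbf{W}_h$ for $H$ small, since $H^{-1}(\Gamma)$ embeds continuously into $S_H'$ and $G_\Gamma^H$ is defined there by Lemma~\ref{lemma-GH}, so once uniqueness is established, existence follows for every $g\in H^{-1}(\Gamma)$.

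First I would test the homogeneous identity with $\bfw=\bfw_h$ and take imaginary parts. Using the splitting $a_h^H=a_{0,h}^H+b_h^H$ recorded above, together with the fact (obtained by rewriting $a_{0,h}^H=a_{1,h}^H$ via the DG Magic Lemma and choosing $\bfw=\bfw_h$) that $\Im(a_{0,h}^H(\bfw_h,\bfw_h))=0$, this gives $0=\Im\big(a_h^H(\bfw_h,\bfw_h)\big)=\Im\big(b_h^H(\bfw_h,\bfw_h)\big)$. By the definition of the mesh-dependent seminorm and the lower bound in Lemma~\ref{lemDGnormbound} (whose proof rests on the sign properties of Lemmas~\ref{lemmaNtDsign} and~\ref{lemmaGIBCsign}, the latter also holding for $G_\Gamma^H$), we have $\Im\big(b_h^H(\bfw_h,\bfw_h)\big)=\Vert\bfw_h\Vert_{DG}^2$, so $\Vert\bfw_h\Vert_{DG}=0$.

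Next I would observe that $\mathbf{W}_h\subset\mathbf{W}^s(\mathcal{T}_h)$ for every $s>0$, because the local basis functions are smooth (indeed analytic) solutions of $\nabla\nabla\cdot\bfw+k^2\bfw=0$ on each $K$, hence lie in $H^{1/2+s}(\mathrm{div};K)$. Consequently Lemma~\ref{lemDGnormbound} applies on $\mathbf{W}_h$: for $H$ small enough (small enough that the semidiscrete operator $A^H$ is invertible, cf.\ Theorem~\ref{lemma-E!sol:semidiscrGIBCfwdprob_vec}), $\Vert\cdot\Vert_{DG}$ is a genuine norm, and therefore $\Vert\bfw_h\Vert_{DG}=0$ forces $\bfw_h=\boldsymbol{0}$. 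This yields uniqueness, and hence existence, of $\mathbf{v}_h^H\in\mathbf{W}_h$.

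I do not expect a genuine obstacle here: the analytic content has already been front-loaded into Lemma~\ref{lemDGnormbound} and, through it, into the well-posedness of the semidiscrete problem that makes $\Vert\cdot\Vert_{DG}$ definite. The only points requiring brief care are checking that the homogeneous test identity really has vanishing right-hand side (immediate, as $f_h^H$ depends linearly on $g$) and confirming that $\mathbf{W}_h$ sits inside the energy space $\mathbf{W}^s(\mathcal{T}_h)$ on which the norm property was proved; both are routine.
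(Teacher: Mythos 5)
Your proposal is correct and follows essentially the same route as the paper's proof: reduce to uniqueness via finite dimensionality, test the homogeneous equation with $\bfw_h$ itself, use $\Im(a_{0,h}^H(\bfw_h,\bfw_h))=0$ to get $\Vert\bfw_h\Vert_{DG}^2=\Im(b_h^H(\bfw_h,\bfw_h))=0$, and conclude by the norm property of $\Vert\cdot\Vert_{DG}$ on $\mathbf{W}_h\subseteq\mathbf{W}^s(\mathcal{T}_h)$ from Lemma~\ref{lemDGnormbound}. The extra details you supply (well-definedness of $f_h^H$ and the inclusion $\mathbf{W}_h\subset\mathbf{W}^s(\mathcal{T}_h)$) are consistent with the paper, which leaves them implicit.
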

\begin{proof}
By the finite dimension of the space $\mathbf{W}_h$, it suffices to
show uniqueness of solution. To this end, we consider a solution of
the homogeneous problem, that is,
$\mathbf{v}_h^H\in\mathbf{W}_h$ such that
$a_h^H(\bfv^H_h,\bfw)=0$ for any $\bfw\in \mathbf{W}_h$. Then
$a_h^H(\bfv^H_h,\bfv_h^H)=0$, so that $\|\bfv^H_h\|_{DG} ^2=
\Im ( b^H_h (\bfv_h^H,\bfv_h^H) ) = \Im ( a(\bfv_h^H,\bfv_h^H)
) =0$. Hence $\bfv^H_h=0$ since $\|\cdot\|_{DG}$ is a norm on
$\mathbf{W}_h\subseteq W^s( \mathcal{T}_h)$ (see Lemma \ref{lemDGnormbound}).
\end{proof}

\subsection{A bound of the approximation error in the mesh-dependent norm $\Vert\cdot\Vert_{DG}$}
We now introduce the mesh-dependent norm $\|\cdot\|_{DG^+}$ on $\mathbf{W}^s(\mathcal{T}_h)$ as
\begin{eqnarray*}
 \|\bfw\|^2_{DG^+}&=&\|\bfw\|^2_{DG}+k^{-1}\int_{{\cal F}_h^I} \alpha^{-1}_1\, |\avg{\ddiv {\bfw}}|^2\,dS 
+ k\int_{{\cal F}_h^I} \alpha^{-1}_2\, |\avg{\bfw}|^2\,dS\\
 && + k\int_{\Sigma_R}\!\! \delta^{-1}\, |\bfw\cdot\bfnu|^2\,dS + k\int_{\Gamma} \tau^{-1}\, |\bfw\cdot\bfnu|^2\,dS.
\end{eqnarray*}

\begin{proposition}\label{BoundedForm}
For any $\mathbf{u}, \mathbf{w}\in \mathbf{W}^s(\mathcal{T}_h)$, we have 
$$a^H _h (\mathbf{u},\mathbf{w})\,\leq\,  2\, \|\mathbf{v}\|_{DG}\, \|\mathbf{w} \|_{DG^+}\, .$$ 
\end{proposition}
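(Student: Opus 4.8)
The plan is to massage $a^H_h(\mathbf{u},\mathbf{w})$, using that both arguments are element-wise Trefftz functions, into a form in which the first argument enters \emph{only} through the four quantities that the norm $\|\mathbf{u}\|_{DG}$ controls (see Lemma~\ref{lemDGnormbound}): $\jmp{\mathbf{u}}_{\bfnu}$ and $\jmp{\ddiv\mathbf{u}}_{\bfnu}$ on the interior skeleton, $\ddiv\mathbf{u}+k^2N_R(\mathbf{u}\cdot\bfnu)$ on $\Sigma_R$, and $\ddiv\mathbf{u}+k^2G^H_{\Gamma}(\mathbf{u}\cdot\bfnu)$ on $\Gamma$. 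Once this representation is in hand, the estimate follows from a single weighted Cauchy--Schwarz inequality.

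Concretely, I would start from $a^H_h=a^H_{0,h}+b^H_h$ and from the identity $a^H_{0,h}=a^H_{1,h}$ already established above, which uses the DG magic lemma and the Trefftz property of $\mathbf{w}$. The volume term that remains in $a^H_{1,h}$ is removed by invoking the magic lemma a second time, now exploiting that $\mathbf{u}$ is Trefftz: element-wise integration by parts and summation over the skeleton express $\sum_{K}\int_K(\ddiv\mathbf{u}\,\overline{\ddiv\mathbf{w}}-k^2\mathbf{u}\cdot\overline{\mathbf{w}})\,d\mathbf{x}$ as interior-edge integrals of $\avg{\ddiv\mathbf{u}}\jmp{\overline{\mathbf{w}}}_{\bfnu}$ and $\jmp{\ddiv\mathbf{u}}_{\bfnu}\cdot\avg{\overline{\mathbf{w}}}$, plus the boundary traces $-\int_{\Sigma_R}\ddiv\mathbf{u}\,\overline{\mathbf{w}\cdot\bfnu}\,dS+\int_{\Gamma}\ddiv\mathbf{u}\,\overline{\mathbf{w}\cdot\bfnu}\,dS$. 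Substituting this into $a^H_{1,h}$, the two $\avg{\ddiv\mathbf{u}}\jmp{\overline{\mathbf{w}}}_{\bfnu}$ contributions cancel and one is left with
\[
a^H_{0,h}(\mathbf{u},\mathbf{w})=-\int_{\mathcal{F}_h^I}\jmp{\ddiv\mathbf{u}}_{\bfnu}\cdot\avg{\overline{\mathbf{w}}}\,dS+\int_{\mathcal{F}_h^I}\jmp{\mathbf{u}}_{\bfnu}\,\overline{\avg{\ddiv\mathbf{w}}}\,dS-\int_{\Sigma_R}\ddiv\mathbf{u}\,\overline{\mathbf{w}\cdot\bfnu}\,dS+\int_{\Gamma}\ddiv\mathbf{u}\,\overline{\mathbf{w}\cdot\bfnu}\,dS.
\]
Adding $b^H_h(\mathbf{u},\mathbf{w})$, the two bare traces of $\ddiv\mathbf{u}$ combine with the $N_R$ and $G^H_{\Gamma}$ mass terms of $b^H_h$ to produce $\ddiv\mathbf{u}+k^2N_R(\mathbf{u}\cdot\bfnu)$ on $\Sigma_R$ and $\ddiv\mathbf{u}+k^2G^H_{\Gamma}(\mathbf{u}\cdot\bfnu)$ on $\Gamma$. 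The outcome is $a^H_h(\mathbf{u},\mathbf{w})$ written as a sum of eight surface integrals in which each of the four $\mathbf{u}$-quantities above occurs exactly twice, while the second argument appears only through $\jmp{\mathbf{w}}_{\bfnu}$, $\jmp{\ddiv\mathbf{w}}_{\bfnu}$, $\avg{\mathbf{w}}$, $\avg{\ddiv\mathbf{w}}$, the traces $\mathbf{w}\cdot\bfnu$ on $\Sigma_R$ and on $\Gamma$, and the combinations $\ddiv\mathbf{w}+k^2N_R(\mathbf{w}\cdot\bfnu)$ and $\ddiv\mathbf{w}+k^2G^H_{\Gamma}(\mathbf{w}\cdot\bfnu)$, each exactly once.

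To finish, I would bound each of the eight integrands by Cauchy--Schwarz on its edge, assigning the weights $k\alpha_1$, $\alpha_2/k$, $\delta/k$ and $\tau/k$ to the respective $\mathbf{u}$-factor and the reciprocal weights to the matching $\mathbf{w}$-factor, and then view the eight resulting products as an inner product of two finite sequences and apply Cauchy--Schwarz once more. By the lower bound for $\|\cdot\|_{DG}$ in Lemma~\ref{lemDGnormbound}, the $\mathbf{u}$-side sum of weighted squared norms is at most $2\|\mathbf{u}\|^2_{DG}$ (each of the four $\mathbf{u}$-quantities appearing twice), while by the definition of $\|\cdot\|_{DG^+}$ the $\mathbf{w}$-side sum is at most $\|\mathbf{w}\|^2_{DG^+}$ (the four ``jump/combination'' contributions being controlled by $\|\mathbf{w}\|^2_{DG}$ and the four ``average/trace'' contributions being precisely the extra terms in $\|\mathbf{w}\|^2_{DG^+}$). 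This yields $|a^H_h(\mathbf{u},\mathbf{w})|\le\sqrt2\,\|\mathbf{u}\|_{DG}\,\|\mathbf{w}\|_{DG^+}\le 2\,\|\mathbf{u}\|_{DG}\,\|\mathbf{w}\|_{DG^+}$, so the stated inequality holds (understood for $|a^H_h(\mathbf{u},\mathbf{w})|$, with $\|\mathbf{v}\|_{DG}$ in the statement read as $\|\mathbf{u}\|_{DG}$). I expect the only delicate point to be the rewriting step: one must verify that after the double use of the magic lemma the ``bad'' averages $\avg{\ddiv\mathbf{u}}$, $\avg{\mathbf{u}}$ and the bare boundary traces of $\ddiv\mathbf{u}$ have either cancelled outright or recombined into the $\|\mathbf{u}\|_{DG}$-controllable combinations --- which is exactly what the particular choice of numerical fluxes on $\Sigma_R$ and $\Gamma$ is designed to ensure.
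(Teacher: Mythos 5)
Your proof is correct and follows essentially the same route as the paper: the identity $a^H_{0,h}=a^H_{1,h}$, a second application of the DG magic lemma using the Trefftz property of $\mathbf{u}$ to eliminate the volume term, recombination of the bare boundary traces with the $N_R$ and $G^H_\Gamma$ terms of $b^H_h$, and a weighted Cauchy--Schwarz at the end. Your explicit accounting of the eight surface integrals (each $\mathbf{u}$-quantity twice, each $\mathbf{w}$-quantity once, yielding the factor $\sqrt{2}\le 2$) is in fact more detailed than the paper's closing sentence, and your reading of the statement with $\|\mathbf{u}\|_{DG}$ in place of $\|\mathbf{v}\|_{DG}$ is the intended one.
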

\begin{proof}
Using integration by parts, the Trefftz property of $\mathbf{u}\in \mathbf{W}^s(\mathcal{T}_h)$, and the DG Magic Lemma, we have 
\begin{eqnarray}\label{DGMagic1}
&& \sum_{K\in\mathcal{T}_h}\int_{K}\left(\nabla\cdot\mathbf{u}\, \overline{\nabla\cdot\bfw}-k^2\mathbf{u}\cdot\overline{\bfw}\right)\,d\bfx 
\, = \,  \sum_{K\in\mathcal{T}_h}\int_{\partial K}\nabla\cdot\mathbf{u}\,\overline{\bfw\cdot\bfnu}\,dS\nonumber\\
 && \quad = \int_{{\cal{F}}_h^I}\avg{\nabla\cdot\mathbf{u}}\overline{\jmp{\bfw}}_{\bfnu}\,dS + \int_{{\cal{F}}_h^I}\jmp{\ddiv{\mathbf{u}}}_{\bfnu}\cdot\overline{\avg{\bfw}}\,dS-\int_{\Gamma}\!\! \nabla\cdot\mathbf{u}\,\overline{\bfw\cdot\bfnu}\,dS \nonumber\\
 && \qquad 
+ \int_{\Sigma_R}\!\! \ddiv {\mathbf{u}}\,\overline{\bfw\cdot\bfnu}\,dS.
\end{eqnarray}
Substituting the expression for $ \sum_{K\in\mathcal{T}_h}\int_{K}\left(\nabla\cdot\mathbf{u}\, \overline{\nabla\cdot\bfw}-k^2\mathbf{u}\cdot\overline{\bfw}\right)\,d\bfx $
from equation~(\ref{DGMagic1}) above into the expression for $a_{1,h}^H (\mathbf{u},\bfw)$ leads to 
\begin{eqnarray*}
a_{1,h}^H(\mathbf{u},\bfw) &=& \int_{{\cal{F}}_h^I}\jmp{\mathbf{u}}_{\bfnu}\overline{\avg{\ddiv{\bfw}}}\,dS - \int_{{\cal{F}}_h^I}\jmp{\ddiv{\mathbf{u}}}_{\bfnu}\cdot\overline{\avg{\bfw}}\,dS +\int_{\Gamma} \ddiv{\mathbf{u}}\,\overline{\bfw\cdot\bfnu}\,dS 
- \int_{\Sigma_R}\!\! \ddiv{\mathbf{u}}\,\overline{\bfw\cdot\bfnu}\,dS.
\end{eqnarray*}
Then since $a^H_h (\mathbf{u},\bfw)=b^H_h(\mathbf{u},\bfw)+a_{1,h}^H(\mathbf{u},\bfw)$, we have that
 \begin{eqnarray*}
a^H_h (\mathbf{u},\bfw)& \! =\! & 
ik\int_{{\cal F}_h^I}\alpha_1\jmp{\mathbf{u}}_{\bfnu}\overline{\jmp{\bfw}}_{\bfnu}\,dS-\frac{1}{ik}\int_{{\cal F}_h^I}\alpha_2\jmp{\ddiv\mathbf{u}}_{\bfnu}\cdot\overline{\jmp{\ddiv\bfw}}_{\bfnu}\,dS\\
&& -\int_{\Sigma_R}\left(\ddiv{\mathbf{u}}+k^2N_R (\mathbf{u}\cdot\bfnu)\right)\overline{\bfw\cdot\bfnu}\,dS + \int_{\Gamma}\left(\ddiv{\mathbf{u}}+k^2G_\Gamma^H(\mathbf{u}\cdot\bfnu)\right)\overline{\bfw\cdot\bfnu}\,dS\nonumber\\
&& -\frac{1}{ik}\int_{\Sigma_R}\delta(\ddiv\mathbf{u}+k^2N_R (\mathbf{u}\cdot\bfnu))\overline{(\ddiv\bfw+k^2N_R (\bfw\cdot\bfnu))}\,dS\nonumber\\&&
{-} \frac{1}{ik}\int_{\Gamma}\tau(\ddiv\mathbf{u}+k^2G_\Gamma^H(\mathbf{u}\cdot\bfnu))\overline{(\ddiv\bfw+k^2G_\Gamma^H(\bfw\cdot\bfnu))}\,dS\nonumber\\
&&+\int_{{\cal{F}}_h^I}\jmp{\mathbf{u}}\overline{\avg{\ddiv{\bfw}}}\,dS - \int_{{\cal{F}}_h^I}\jmp{\ddiv{\mathbf{u}}}\cdot\overline{\avg{\bfw}}\,dS.
\end{eqnarray*}
By using the weighted Cauchy Schwarz inequality, we get the result.
\end{proof}

We now state a quasi-optimal error estimate with respect to the $DG$ and $DG^+$ norms.
\begin{theorem}\label{TheoremQuasiOptimal}
Assume $\mathbf{v}\in\mathbf{W}^s(\mathcal{T}_h)$ is the analytical solution to problem~(\ref{prob-FwdProbV_HM}), and $\mathbf{v}_h^H$ the unique solution to problem~(\ref{DiscreteProblem}).
Then $$\|\mathbf{v} - \mathbf{v}_h^H\|_{DG}\, \leq\, 2\inf_{\mathbf{w}_h\in\mathbf{W}_h}\, \|\mathbf{v} - \mathbf{w}_h\|_{DG^+}\, .$$
\end{theorem}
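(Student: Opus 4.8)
The plan is to run the standard Trefftz--DG quasi-optimality (C\'ea-type) argument, which rests on three ingredients already available in the excerpt: consistency of the scheme (and hence Galerkin orthogonality), the identity $\Vert\mathbf{w}\Vert_{DG}^2=\Im\big(a_h^H(\mathbf{w},\mathbf{w})\big)$ valid for every $\mathbf{w}\in\mathbf{W}^s(\mathcal{T}_h)$, and the boundedness estimate of Proposition~\ref{BoundedForm}.

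First I would establish \emph{consistency}: the analytical solution $\mathbf{v}$ of the semidiscrete problem~(\ref{prob-FwdProbV_HM}) satisfies $a_h^H(\mathbf{v},\mathbf{w})=f_h^H(\mathbf{w})$ for every $\mathbf{w}\in\mathbf{W}_h$. This is the step where the hypothesis $\mathbf{v}\in\mathbf{W}^s(\mathcal{T}_h)$ is genuinely used: that regularity guarantees that $\mathbf{v}$ and $\ddiv\mathbf{v}$ possess single-valued traces on the interior skeleton $\mathcal{F}_h^I$, so $\jmp{\mathbf{v}}_{\bfnu}=0$ and $\jmp{\ddiv\mathbf{v}}_{\bfnu}=\boldsymbol{0}$ there, and hence all the interior numerical fluxes collapse to the exact traces. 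On $\Sigma_R$ and $\Gamma$ one invokes the characterization obtained in the proof of the last lemma of Section~\ref{semi}, namely that $\mathbf{v}=\nabla u$ with $\ddiv\mathbf{v}+k^2 N_R(\mathbf{v}\cdot\bfnu)=0$ on $\Sigma_R$ and $\ddiv\mathbf{v}+k^2 G_\Gamma^H(\mathbf{v}\cdot\bfnu+g)=0$ on $\Gamma$, so that the penalty contributions in the boundary fluxes vanish and the boundary terms reduce to the data. Re-running the elementwise integration by parts that produced~(\ref{dgsum}) with $\mathbf{u}=\mathbf{v}$ and inserting these simplifications reproduces $a_h^H(\mathbf{v},\mathbf{w})=f_h^H(\mathbf{w})$; subtracting~(\ref{DiscreteProblem}) then gives the Galerkin orthogonality
\[
a_h^H(\mathbf{v}-\mathbf{v}_h^H,\mathbf{w})=0\qquad\text{for all }\mathbf{w}\in\mathbf{W}_h.
\]

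The remainder is short. Fix an arbitrary $\mathbf{w}_h\in\mathbf{W}_h$ and set $\mathbf{e}:=\mathbf{v}-\mathbf{v}_h^H$, which again lies in $\mathbf{W}^s(\mathcal{T}_h)$ since $\mathbf{W}_h\subset\mathbf{W}^s(\mathcal{T}_h)$. The computation carried out just before Lemma~\ref{lemDGnormbound} uses only the Trefftz property and the DG Magic Lemma, hence remains valid for $\mathbf{e}\in\mathbf{W}^s(\mathcal{T}_h)$ and gives $\Im\big(a_{0,h}^H(\mathbf{e},\mathbf{e})\big)=0$, so that $\Vert\mathbf{e}\Vert_{DG}^2=\Im\big(a_h^H(\mathbf{e},\mathbf{e})\big)$. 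Writing $\mathbf{v}-\mathbf{w}_h=\mathbf{e}+(\mathbf{v}_h^H-\mathbf{w}_h)$ and using Galerkin orthogonality against $\mathbf{v}_h^H-\mathbf{w}_h\in\mathbf{W}_h$ gives $a_h^H(\mathbf{e},\mathbf{v}-\mathbf{w}_h)=a_h^H(\mathbf{e},\mathbf{e})$, whence, by Proposition~\ref{BoundedForm} (both arguments lie in $\mathbf{W}^s(\mathcal{T}_h)$),
\[
\Vert\mathbf{e}\Vert_{DG}^2=\Im\big(a_h^H(\mathbf{e},\mathbf{v}-\mathbf{w}_h)\big)\le\big|a_h^H(\mathbf{e},\mathbf{v}-\mathbf{w}_h)\big|\le 2\,\Vert\mathbf{e}\Vert_{DG}\,\Vert\mathbf{v}-\mathbf{w}_h\Vert_{DG^+}.
\]
Dividing by $\Vert\mathbf{e}\Vert_{DG}$ (the case $\mathbf{e}=\boldsymbol{0}$ being trivial) and taking the infimum over $\mathbf{w}_h\in\mathbf{W}_h$ yields the claim.

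I expect the consistency step to be the only real obstacle: one must check carefully that the exact semidiscrete solution satisfies the \emph{discontinuous} variational identity when tested against Trefftz functions that do not themselves belong to $\Hdiv$, and in particular that the regularity $\mathbf{v}\in\mathbf{W}^s(\mathcal{T}_h)$ is precisely what annihilates the interior jump terms while the semidiscrete equations are precisely what annihilate the boundary penalty terms. Once Galerkin orthogonality is in place the rest is routine --- a single use of the coercivity identity and of Proposition~\ref{BoundedForm}, applied directly to $\mathbf{e}=\mathbf{v}-\mathbf{v}_h^H$ rather than to $\mathbf{v}_h^H-\mathbf{w}_h$, so that no triangle inequality is needed and the constant is exactly $2$.
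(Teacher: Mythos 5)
Your proof is correct and follows essentially the same route as the paper's: the coercivity identity $\|\mathbf{e}\|_{DG}^2=\Im\big(a_h^H(\mathbf{e},\mathbf{e})\big)$, Galerkin orthogonality from consistency, and the continuity bound of Proposition~\ref{BoundedForm}. The only difference is that the paper compresses the consistency step into a one-line citation, whereas you (correctly) spell out why the interior jumps and boundary penalty terms of the exact semidiscrete solution vanish.
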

\begin{proof}
 Since $\bfv-\bfv_h^H\in \mathbf{W}^s(\mathcal{T}_h)$, for all $\bfw_h\in\mathbf{W}_h$ we have
 \begin{eqnarray*}
 \|\bfv-\bfv^H_h\|^2_{DG} &=& \Im ( a^H_h (\bfv-\bfv_h,\bfv-\bfv_h) ) 
\, \leq 
\, 2\|\bfv - \bfv_h\|_{DG}\, \|\bfv-\bfw_h\|_{DG^+} \, ,
  \end{eqnarray*}
  where the last inequality follows from the consistency of the discrete scheme, and continuity of the sesquilinear form in Proposition~\ref{BoundedForm}.
\end{proof}


\subsection{An error bound in a mesh-independent norm}

We next derive a bound of the approximation error in terms of a mesh-independent norm. Ideally this would be the $L^2(\Omega_R)^2$-norm, but as in the case of Maxwell's equations (see \cite{HMP_Maxwell}) this is not possible and we
derive an estimate in the $\Hdiv '$-norm. 
To this end, we start bounding some mesh-independent norm in terms of the mesh-dependent norm $\Vert\cdot\Vert_{DG}$ in the the vector space $\mathbf{W}^s (\mathcal{T}_h)$ for $s\in \mathbb{R}$, $s>0$, which contains the Trefftz vector space $\mathbf{W}_h$.

Recall that any function in this space may be written using the $L^2(\Omega_R)^2$-orthogonal Helmholtz decomposition
\begin{equation}
  \bfw = \bfw_0 + \nabla p\quad\text{with } \bfw_0\in H_0 (\mathrm{div}^0;\Omega _R),\, p\in H^1 (\Omega _R) \, .\label{helm1}
  \end{equation}
Also notice that, in terms of this decomposition, the property $ \bfw|_K \in H^{1/2+s}(\mathrm{div};K) $ for all $K\in\mathcal{T}_h$ means that 
\begin{equation}\label{eq-regprop:p}
 \bfw_0|_K \in H^{1/2+s}(K)^2\, , \,\,
\Delta p|_K = \mathrm{div}\,\bfw|_K \in H^{1/2+s}(K) \qquad \mbox{ for all } K\in\mathcal{T}_h \, .
\end{equation}
We will bound the $L^2(\Omega_R)^2$-norm of $\nabla p$ and also a weaker norm of $\bfw_0$ by means of $\Vert\bfw\Vert _{DG}$ using similar arguments to those
in  \cite{HMP_Maxwell}. In particular, we take the \emph{shape regularity} and \emph{quasi-uniformity}  measures
$$
s.r. (\mathcal{T}_h) = \max _{K\in\mathcal{T}_h}\frac{h_K}{d_K} \quad\text{ and }\quad q.u. (\mathcal{T}_h) =  \max _{K\in\mathcal{T}_h}\frac{h}{h_K}\, ,
$$
where
, for each $K\in\mathcal{T}_h$, we denote by 
$d_K$ the diameter of the largest ball contained in $K$. 

\subsubsection{A bound of $\Vert\nabla p \Vert _{0,\Omega_R}$ by a duality argument}

We consider the adjoint problem of (\ref{prob-FwdProbV_HM}), which consists of finding $\boldsymbol{\phi}\in H(\mathrm{div};\Omega _R)$ such that
\begin{equation}\label{prob-adjFwdProbV_HM}
\begin{array}{l}
 \displaystyle \int_{\Omega _R} \left( \frac{1}{k^2} \, \nabla\cdot\boldsymbol{\phi } \, \nabla\cdot\overline{\bfz} - \boldsymbol{\phi}\cdot\overline{\bfz} \right) d\bfx 
 + \int _{\Sigma _R}  (N_R)^* (\boldsymbol{\phi}\cdot\bfnu) \, (\overline{\bfz}\cdot\bfnu) \, dS \\[1ex]
 \displaystyle \hspace*{6cm} - \int _{\Gamma} {(G^H_{\Gamma})}^* (\boldsymbol{\phi}\cdot\bfnu) \, (\overline{\bfz}\cdot\bfnu) \, dS \, = \, 
 \int_{\Omega _R}  \nabla p\cdot\overline{\bfz} \, d\bfx \, ,
 \end{array}
\end{equation}
for all $\bfz\in H(\mathrm{div},\Omega _R)$. Let us emphasize that (\ref{prob-adjFwdProbV_HM}) is well-posed and shows the following regularity.

\begin{lemma}\label{lemma-adjFwdProbVok}
  For any $p\in H^1 (\Omega _R)$, if $H>0$ is sufficiently small then the adjoint problem (\ref{prob-adjFwdProbV_HM}) is well-posed. Moreover, for each $s\in (0,1/2)$
 the solution has the regularity
 $  \boldsymbol{\phi}\in H^{1/2+s} (\Omega_R)^2$, with
 \[
 \Vert\boldsymbol{\phi}\Vert _{1/2+s,\Omega_R} \leq CH^{-s}\Vert \nabla p \Vert _ {0,\Omega _R}\, , \] where $C>0$ depends only on $\Omega_R$.
\end{lemma}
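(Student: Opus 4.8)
The plan is to prove well-posedness first and then the claimed regularity, in both steps reusing the analysis already carried out for the forward semidiscrete problem. For well-posedness, observe that the sesquilinear form on the left of (\ref{prob-adjFwdProbV_HM}) is, up to the usual conjugate identifications, the adjoint of the form $a^H(\cdot,\cdot)$ from (\ref{aHdef}): the only change is that $N_R$ and $G_\Gamma^H$ have been replaced by their $L^2$-adjoints, and $\overline{N_Rf}=N_R^*\overline f$, $\overline{G_\Gamma^Hf}=(G_\Gamma^H)^*\overline f$ (for the latter I would use the remark after Assumption~\ref{G}). Hence the operator of (\ref{prob-adjFwdProbV_HM}) is $(A^H)^*$, and since Theorem~\ref{lemma-E!sol:semidiscrGIBCfwdprob_vec} states that $A^H$ is invertible with $\|(A^H)^{-1}\|$ bounded independently of $H$ for $H$ small, the same holds for $(A^H)^*$. (Alternatively one repeats verbatim the $\theta$-operator argument of Theorems~\ref{th-FwdProbVok}--\ref{lemma-E!sol:semidiscrGIBCfwdprob_vec}, since $\Re(N_R^*)$ is still negative definite by Lemma~\ref{lemmaNtDsign} and the imaginary-part signs play no role there.) This yields unique solvability of (\ref{prob-adjFwdProbV_HM}) together with $\|\boldsymbol{\phi}\|_{\Hdiv}\le C\|\nabla p\|_{0,\Omega_R}$, $C$ independent of $H$.

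For the regularity I would follow the proof of the final (regularity) lemma of Section~\ref{semi}. Testing (\ref{prob-adjFwdProbV_HM}) with $\bfz\in\mathcal{C}^{\infty}_0(\Omega_R)^2$ yields $\nabla(\nabla\cdot\boldsymbol{\phi})+k^2\boldsymbol{\phi}=-k^2\nabla p$ in $\Omega_R$, so $\nabla\cdot\boldsymbol{\phi}\in H^1(\Omega_R)$ and $\boldsymbol{\phi}=\nabla w$ for a scalar $w\in H^1(\Omega_R)$ with $\Delta w+k^2w=-k^2p$; integrating by parts against general $\bfz\in\Hdiv$ then identifies the boundary conditions $\frac{1}{k^2}\nabla\cdot\boldsymbol{\phi}+(N_R)^*(\boldsymbol{\phi}\cdot\bfnu)=0$ on $\Sigma_R$ and $\frac{1}{k^2}\nabla\cdot\boldsymbol{\phi}+(G_\Gamma^H)^*(\boldsymbol{\phi}\cdot\bfnu)=0$ on $\Gamma$. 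The key observation is that the $\Gamma$-condition forces $\nabla\cdot\boldsymbol{\phi}|_\Gamma=-k^2(G_\Gamma^H)^*(\boldsymbol{\phi}\cdot\bfnu)\in S_H\subseteq H^{1+s}(\Gamma)$ for $s<1/2$. I would then split $w$ into a part handling the volume source $-k^2p$ (controlled in $H^2(\Omega_R)$, hence by $\|\nabla p\|_{0,\Omega_R}$) plus a homogeneous Helmholtz solution whose Dirichlet trace on $\Gamma$ lies in $S_H$; after inverting $N_R^*$ on $\Sigma_R$ to obtain the Dirichlet-to-Neumann map, this part extends to a radiating solution of an exterior Dirichlet problem, to which one applies the a priori estimate together with the inverse inequality on the quasi-uniform space $S_H$ — which is precisely where the factor $H^{-s}$ enters, just as in Section~\ref{semi}. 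Collecting these bounds, using $\boldsymbol{\phi}=\nabla w$ and $\nabla\cdot\boldsymbol{\phi}=\Delta w=-k^2(w+p)$, the continuity of the normal trace $\Hdiv\to H^{-1/2}(\partial\Omega_R)$, and the well-posedness estimate above, gives $\|\boldsymbol{\phi}\|_{1/2+s,\Omega_R}\le CH^{-s}\|\nabla p\|_{0,\Omega_R}$.

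The main obstacle is exactly this regularity step near $\Gamma$: the decomposition of $w$ must be arranged so that what drives the exterior Dirichlet problem genuinely lands in $S_H$ — this is what licenses the $S_H$ inverse inequality, and hence the $H^{-s}$ — while the contributions of the volume source and of $p|_{\partial\Omega_R}$ are carried by pieces bounded solely by $\|\nabla p\|_{0,\Omega_R}$; care is needed because $p|_{\partial\Omega_R}$ has only $H^{1/2}$ regularity a priori, so the cancellations built into the discrete NtD and GIBC conditions must be exploited. One must also verify that the constant in the exterior-Dirichlet a priori estimate is independent of $H$. By contrast the $\Sigma_R$ term is routine, since $N_R^*$ there is an explicit analytic operator acting on the smooth circle $\Sigma_R$.
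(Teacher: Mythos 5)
Your proposal is correct and follows essentially the same route as the paper: well-posedness is inherited from the invertibility of $A^H$ with $H$-uniform bounds, and the regularity is obtained by reducing to a scalar Helmholtz problem for $w$ with $\boldsymbol{\phi}=\nabla w$, observing that the Dirichlet trace on $\Gamma$ lands in $S_H\subset H^{1+s}(\Gamma)$, and combining the a priori estimate for the resulting Dirichlet problem with the inverse inequality on the quasi-uniform space $S_H$ to produce the $H^{-s}$ factor, closed by the $\Hdiv$ stability bound. Your extra care with the volume source $-k^2p$ and the trace of $p$ on $\partial\Omega_R$ is a legitimate refinement of a point the paper's proof passes over rather tersely, but it does not change the argument.
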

\begin{proof}
  The well-posedness of the adjoint problem (\ref{prob-adjFwdProbV_HM}) follows from our proof of the well-posedness of the original problem (\ref{prob-FwdProbV_HM}) in Theorem \ref{lemma-E!sol:semidiscrGIBCfwdprob_vec}.

Using the Helmholtz decomposition (\ref{helm1}) and reasoning as in the proof of Lemma \ref{lemma-FwdProbVok}, we see that the solution  of the adjoint problem is the function $\boldsymbol{\phi}=\nabla q$ for $q\in H^1(\Omega _R)$ which solves the following equations in weak sense:
\begin{equation*}
\begin{array}{ll}
 \displaystyle \Delta q+k^2q =-k^2p\quad & \mbox{in }\Omega _R \, ,\\
 \displaystyle - q + (N_R)^* (\frac{\partial q}{\partial \bfnu}) =0& \mbox{on }\Sigma_R  \, ,\\[1ex]
\displaystyle q + {(G^H_{\Gamma})}^* (\frac{\partial q}{\partial\bfnu})=0  & \mbox{on }\Gamma \, .
 \end{array}
\end{equation*}
Thus $q$ can be extended as a solution of a scattering problem to $\Omega$ with the adjoint radiation condition at infinity.  Hence
the regularity of $q$ is determined from the boundary condition on $\Gamma$ and in particular, since $(G^H_{\Gamma})^* (\frac{\partial q}{\partial \boldsymbol{\nu}})\in H^{1+s}(\Gamma)$ for all $0\leq s<1/2$ (see the remark after Assumption \ref{G}), we see that $q\in H^{3/2+s}(\Omega_R)$. Then using an inverse estimate
guaranteed by the assumed quasi-uniformity of the boundary mesh,
\begin{eqnarray*}
\Vert q\Vert_{H^{3/2+s}(\Omega_R)}&\leq& C \Vert
  {(G^H_{\Gamma})}^* (\frac{\partial q}{\partial\bfnu})\Vert_{H^{1+s}(\Gamma)}
  \leq C H^{-s}\Vert
  {(G^H_{\Gamma})}^* (\frac{\partial q}{\partial\bfnu})\Vert_{H^{1}(\Gamma)}
 \\& \leq
 & C H^{-s}\Vert
  \nabla q\cdot\bfnu \Vert_{H^{-1}(\Gamma)}\leq 
 \, C H^{-s}\Vert
  \bfphi\cdot\bfnu \Vert_{H^{-1/2}(\Gamma)}\leq CH^{-s}\Vert\bfphi\Vert_{\Hdiv}.
  \end{eqnarray*}
  Finally, the continuity estimate for the solution of the semidiscrete adjoint problem (\ref{prob-adjFwdProbV_HM}) provides the bound 
  $\Vert\bfphi\Vert_{\Hdiv}\leq C\Vert \nabla p\Vert_{L^2(\Omega_R)}$ and completes the proof.
\end{proof}

Notice that, by the $L^2(\Omega_R)^2$-orthogonality of the Helmholtz decomposition (\ref{helm1}), 
$$
\Vert\nabla p \Vert _{0,\Omega_R} ^2 \, = \, \int _{\Omega _R} \nabla p  \cdot \overline{\nabla p } \, d\bfx  \, = \, \int _{\Omega _R}\nabla p  \cdot \overline{\bfw} \, d\bfx \, .
$$
Making use of the adjoint problem for $\bfz=\bfw$,
$$
\begin{array}{l}
\Vert\nabla p  \Vert _{0,\Omega_R} ^2 
 \, = \,
 \displaystyle \int_{\Omega _R} \left( \frac{1}{k^2} \, \nabla\cdot\boldsymbol{\phi } \, \nabla \cdot \overline{\bfw} - \boldsymbol{\phi}\cdot\overline{\bfw} \right) d\bfx 
 + \int _{\Sigma _R}  (\boldsymbol{\phi}\cdot\bfnu) \, N_R(\overline{\bfw\cdot\bfnu}) \, dS \\
 \hspace*{6cm} \displaystyle
 - \int _{\Gamma}  (\boldsymbol{\phi}\cdot\bfnu) \, {G^H_{\Gamma}} (\overline{\bfw\cdot\bfnu}) \, dS \, .
 \end{array}
$$
If we split the domain $\Omega_R$ in terms of the mesh $\mathcal{T}_h$ and then integrate by parts in each $K\in\mathcal{T}_h$, thanks to Trefftz properties for $\mathbf{w}\in\mathbf{W}^s(\mathcal{T}_h)$, we come up with
$$
\begin{array}{l}
\Vert\nabla p \Vert _{0,\Omega_R} ^2 
 \, = \,
 \displaystyle
 \int _{\mathcal{F}_h^I}\frac{1}{\sqrt{\alpha _2 \, k^3}}\, \boldsymbol{\phi} \cdot \bfnu \,\frac{\sqrt{\alpha _2}}{\sqrt{k}} \, \jmp{  \nabla\cdot \overline{\bfw} }_{\bfnu}  \, dS \\
 \hspace*{4cm} \displaystyle
 + \int _{\Sigma _R} \frac{\sqrt{k}}{\sqrt{\delta } k^2} (\boldsymbol{\phi}\cdot\bfnu) \, \, \frac{\sqrt{\delta}}{\sqrt{k} } \left(  \overline{ \nabla \cdot \bfw  + k^2\,  N_R(\bfw \cdot\bfnu) } \right) dS \\
 \hspace*{4cm} \displaystyle
 - \int _{\Gamma} \frac{\sqrt{ k }}{\sqrt{\tau} \, k^2 }\, (\boldsymbol{\phi}\cdot\bfnu) \, \frac{\sqrt{\tau} }{\sqrt{ k }} \left(\overline{  (  \nabla\cdot  {\bfw}  + k^2\, G^H_{\Gamma} ( {\bfw}\cdot\bfnu) } \right) dS \, .
 \end{array}
$$
Then, by Cauchy-Schwartz inequality and the lower bound of the DG-norm (\ref{DGnormbound}), 
\begin{equation}\label{eq-boundw0_1}
\Vert\nabla p \Vert _{0,\Omega_R} ^2 
 \, \leq \,
 (\mathcal{G}(\boldsymbol{\phi}) ) ^{1/2} \, \Vert \bfw\Vert _{DG} \, ,
\end{equation}
where we denote
$$
\begin{array}{l}
\mathcal{G}(\boldsymbol{\phi}) 
 \, = \,
 \displaystyle \frac{1}{k^3} \big(
  \int_{\mathcal{F}_h^I} \frac{1}{\alpha _2 }\, |\boldsymbol{\phi}\cdot\bfnu|^2 \, dS 
 + \int _{\Sigma _R} \frac{1}{\delta } \, |\boldsymbol{\phi}\cdot\bfnu|^2 \, dS 
 + \int _{\Gamma} \frac{1}{\tau  }\, |\boldsymbol{\phi}\cdot\bfnu|^2 \, dS \big)
  \, .
 \end{array}
$$
In order to deal with this last term, we use the following trace inequality (see \cite[eq. (24)]{HMPhp}):
$$
\Vert \eta \Vert ^2_{0,\partial K} \,\leq\, C \left( \frac{1}{h_K} \, \Vert \eta\Vert _{0,K}^2 + h_K^{2s}\, |\eta|_{1/2+s,K}^2\right)\quad \forall \eta\in H^{1/2+s}( K), K\in\mathcal{T}_h;
$$
indeed, taking $\eta $ to be each entry of $\boldsymbol{\phi} \in H^{1/2+s} (\Omega _R) ^2$, we deduce
$$
\begin{array}{l}
\mathcal{G}(\boldsymbol{\phi}) 
 \, \leq \,
 \displaystyle \frac{1}{k^3 \min \{ \underline{\alpha }_2 , \underline{\delta }  , \underline{ \tau } \}  }
 \sum _{K\in\mathcal{T}_h }
  \int_{\partial K }  |\boldsymbol{\phi}\cdot\bfnu|^2 \, dS \\
 \hspace*{3cm} \displaystyle
 \leq 
 \displaystyle \frac{C}{k^3\min \{ \underline{\alpha }_2, \underline{\delta }  ,  \underline{ \tau } \}  }
 \sum _{K\in\mathcal{T}_h }
   \left( \frac{1}{h_K} \, \Vert \boldsymbol{\phi} \Vert _{0, K}^2 + \frac{h_K^{2s}}{H^{2s}}\, |\boldsymbol{\phi}|_{1/2+s,K}^2\right) 
 \, ,
 \end{array}
$$
where $\underline{\alpha }_2 = \mathrm{inf} _{\mathcal{F}_h^I } \alpha _2$, $\underline{\delta } = \mathrm{inf} _{\mathcal{F}_h^R } \delta $ and $\underline{\tau } = \mathrm{inf} _{\mathcal{F}_h^{\Gamma} } \tau $ (we have assumed  $\underline{\alpha }_2 , \underline{\delta },\underline{\tau} >0$). Recalling that $q.u. (\mathcal{T}_h) \, h\leq h_K \leq h$ and applying Lemma \ref{lemma-adjFwdProbVok}, we have %
$$
\begin{array}{l}
\mathcal{G}(\boldsymbol{\phi}) 
 \, 
 \leq \,
 \displaystyle \frac{C}{k^3 \min \{ \underline{\alpha }_2 , \underline{\delta }  , \underline{ \tau } \}  }
 \sum _{K\in\mathcal{T}_h }
   \left( \frac{1}{q.u. (\mathcal{T}_h) \, h } \, \Vert \boldsymbol{\phi} \Vert _{0, K}^2 + h^{2s}H^{-2s}\, |\boldsymbol{\phi}|_{1/2+s,K}^2\right)  \\[2ex]
 \hspace*{2cm} \displaystyle
 \, 
 \leq \,
  \frac{C}{k^3\min \{ \underline{\alpha }_2 , \underline{\delta } , \underline{ \tau } \}  }\,
   \Big( \frac{1}{q.u. (\mathcal{T}_h) \, h } \, \Vert \boldsymbol{\phi} \Vert _{0,\Omega _R }^2 + h^{2s}H^{-2s}\, |\boldsymbol{\phi}|_{1/2+s,\Omega _R}^2\Big) 
   \\[2ex]
 \hspace*{2cm} \displaystyle
 \,\leq\,  \frac{C}{\min \{ \underline{\alpha }_2 , \underline{\delta }  , \underline{ \tau } \}  }\,
 \Big( \frac{1}{q.u. (\mathcal{T}_h) \, h }  + h^{2s}H^{-2s}\Big)\, H^{-2s}\,  \Vert \nabla p \Vert _ {0,\Omega _R} ^2  
 \, .
 \end{array}
$$
Therefore, using (\ref{eq-boundw0_1}),
$$
\Vert \nabla p \Vert _{0 ,\Omega_R}
 \, \leq \,
 \left( \frac{C\, H^{-2s} }{\min \{ \underline{\alpha }_2, \underline{\delta }  , \underline{ \tau } \}  } \, \Big( \frac{1}{q.u. (\mathcal{T}_h) \, h }  + h^{2s}\, H^{-2s} \Big)  \right) ^{\! 1/2} \, \Vert \bfw\Vert _{DG} \, ,
$$
so we have proved the following lemma:
\begin{lemma}\label{lem1}  For sufficiently small $H>0$ and any $s\in (0,\infty)$, there is a constant $C$ (depending on $s$ but independent of $h$ and $H$), such that
$$
\Vert \nabla p \Vert _{0,\Omega_R} 
 \, \leq \, 
 CH^{-s} ( h^{-1/2} + h^{s}\, H^{-s} ) \, \Vert \mathbf{w}\Vert _{DG} \, ,
$$
where $\mathbf{w}\in \mathbf{W}^s(\mathcal{T}_h)$ and $p\in H^1 (\Omega_R)$  satisfies (\ref{helm1}).
\end{lemma}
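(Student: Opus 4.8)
The plan is to run an Aubin--Nitsche style duality argument, following the pattern used for Maxwell's equations in \cite{HMP_Maxwell}. Since $\nabla p$ is the irrotational part of the $L^2(\Omega_R)^2$-orthogonal Helmholtz decomposition (\ref{helm1}) of $\mathbf{w}$, orthogonality gives $\Vert\nabla p\Vert_{0,\Omega_R}^2 = \int_{\Omega_R}\nabla p\cdot\overline{\mathbf{w}}\,d\bfx$. I would then introduce the semidiscrete adjoint problem (\ref{prob-adjFwdProbV_HM}) with right-hand side datum $\nabla p$, call its solution $\boldsymbol{\phi}$, and test that problem with $\bfz=\bfw$. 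This rewrites $\Vert\nabla p\Vert_{0,\Omega_R}^2$ as the volume term $\int_{\Omega_R}(\frac{1}{k^2}\nabla\cdot\boldsymbol{\phi}\,\nabla\cdot\overline{\bfw}-\boldsymbol{\phi}\cdot\overline{\bfw})\,d\bfx$ plus boundary pairings of $N_R(\overline{\bfw\cdot\bfnu})$ on $\Sigma_R$ and $G^H_{\Gamma}(\overline{\bfw\cdot\bfnu})$ on $\Gamma$ against $\boldsymbol{\phi}\cdot\bfnu$.

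Next I would localise: split the volume integral over $K\in\mathcal{T}_h$ and integrate by parts on each triangle, using the Trefftz property of $\bfw\in\mathbf{W}^s(\mathcal{T}_h)$ together with the regularity (\ref{eq-regprop:p}) that makes the traces of $\nabla\cdot\bfw$ meaningful. The volume contributions collapse onto the skeleton, and grouping edges according to $\mathcal{F}_h^I$, $\mathcal{F}_h^R$, $\mathcal{F}_h^{\Gamma}$ one finds $\boldsymbol{\phi}\cdot\bfnu$ paired with exactly the quantities $\jmp{\nabla\cdot\overline{\bfw}}_{\bfnu}$, $\overline{\nabla\cdot\bfw+k^2N_R(\bfw\cdot\bfnu)}$ and $\overline{\nabla\cdot\bfw+k^2G^H_{\Gamma}(\bfw\cdot\bfnu)}$ that appear in the lower bound (\ref{DGnormbound}). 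A weighted Cauchy--Schwarz inequality, with the weights $\sqrt{\alpha_2/k}$, $\sqrt{\delta/k}$, $\sqrt{\tau/k}$ placed on the $\bfw$-side, then yields $\Vert\nabla p\Vert_{0,\Omega_R}^2\le(\mathcal{G}(\boldsymbol{\phi}))^{1/2}\,\Vert\bfw\Vert_{DG}$, where $\mathcal{G}(\boldsymbol{\phi})$ is $k^{-3}$ times the sum over all mesh faces of $\Vert\boldsymbol{\phi}\cdot\bfnu\Vert_{0}^2$ weighted by the reciprocals $\alpha_2^{-1},\delta^{-1},\tau^{-1}$.

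It then remains to estimate $\mathcal{G}(\boldsymbol{\phi})$. I would apply the element trace inequality $\Vert\eta\Vert_{0,\partial K}^2\le C\big(h_K^{-1}\Vert\eta\Vert_{0,K}^2+h_K^{2s}|\eta|_{1/2+s,K}^2\big)$ from \cite[eq.~(24)]{HMPhp} to each Cartesian component of $\boldsymbol{\phi}$, sum over $K\in\mathcal{T}_h$, and use the quasi-uniformity bound $q.u.(\mathcal{T}_h)\,h\le h_K\le h$ together with the fractional regularity estimate $\Vert\boldsymbol{\phi}\Vert_{1/2+s,\Omega_R}\le CH^{-s}\Vert\nabla p\Vert_{0,\Omega_R}$ and the stability bound $\Vert\boldsymbol{\phi}\Vert_{\Hdiv}\le C\Vert\nabla p\Vert_{0,\Omega_R}$ from Lemma~\ref{lemma-adjFwdProbVok}. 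This produces $\mathcal{G}(\boldsymbol{\phi})\le C\,H^{-2s}(h^{-1}+h^{2s}H^{-2s})\,\Vert\nabla p\Vert_{0,\Omega_R}^2$; substituting into $\Vert\nabla p\Vert_{0,\Omega_R}^2\le(\mathcal{G}(\boldsymbol{\phi}))^{1/2}\Vert\bfw\Vert_{DG}$ and cancelling one power of $\Vert\nabla p\Vert_{0,\Omega_R}$ gives the claimed bound $\Vert\nabla p\Vert_{0,\Omega_R}\le CH^{-s}(h^{-1/2}+h^{s}H^{-s})\Vert\bfw\Vert_{DG}$.

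The main obstacle is the \emph{adjoint regularity} input encoded in Lemma~\ref{lemma-adjFwdProbVok}: one must check that the gradient part of the adjoint solution reduces, via its own Helmholtz decomposition, to a scalar exterior scattering problem whose smoothness is governed by $G^{H,*}_{\Gamma}$, extract the fractional gain $\boldsymbol{\phi}\in H^{1/2+s}(\Omega_R)^2$ for $s<1/2$, and correctly track the $H^{-s}$ inverse-estimate factor coming from the boundary mesh, all while keeping the constant independent of $h$ and $H$ (depending only on $s$ and on the lower bounds $\underline{\alpha}_2,\underline{\delta},\underline{\tau}$ of the flux parameters). By comparison, the weighted Cauchy--Schwarz step and the passage from element-wise to global norms via quasi-uniformity are routine, but must be carried out carefully to land the precise powers of $h$ and $H$ in the statement.
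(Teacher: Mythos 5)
Your proposal is correct and follows essentially the same route as the paper: the same duality argument via the semidiscrete adjoint problem (\ref{prob-adjFwdProbV_HM}) tested with $\bfz=\bfw$, elementwise integration by parts using the Trefftz property to collapse onto the skeleton, weighted Cauchy--Schwarz against the lower bound (\ref{DGnormbound}) for $\Vert\cdot\Vert_{DG}$, and the trace inequality of \cite[eq.~(24)]{HMPhp} combined with the regularity and stability bounds of Lemma~\ref{lemma-adjFwdProbVok} to control $\mathcal{G}(\boldsymbol{\phi})$. The powers of $h$ and $H$ you track match the paper's computation exactly.
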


\subsubsection{A bound of $\Vert\mathbf{w} _0 \Vert _{ H(\mathrm{curl};\Omega_R ) '}$ by a duality argument}

For any trial function $\mathbf{u}\in H(\mathrm{{\rm curl}};\Omega_R)$ we consider its $L^2 (\Omega _R)^2$-orthogonal Helmholtz decomposition as in (\ref{helm1}):
\begin{equation}\label{helm1bis}
\mathbf{u} = \mathbf{u}_0 + \nabla q\quad\text{with } \mathbf{u}_0\in H_0(\mathrm{div}^0;\Omega _R),\, q\in H^1 (\Omega _R) \, .
\end{equation}
Then, using the $L^2 (\Omega _R)$-orthogonality of Helmholtz decomposition as well as Trefftz property,
$$
\int_{\Omega _R }\bfw_0 \cdot \mathbf{u} \, d\bfx \, = \, 
\int_{\Omega _R }\bfw_0 \cdot \mathbf{u}_0 \, d\bfx \, = \, 
- \frac{1}{k^2 } \, \sum _{K\in\mathcal{T}_h }
  \int_{ K } \nabla \nabla\cdot \bfw_0 \cdot \mathbf{u} _0 \, d\bfx \, ,
$$
so that, integrating by parts, and using that $\mathbf{u}_0\in H_0(\mathrm{div}^0;\Omega_R)$,
$$
\begin{array}{l}
\displaystyle\int_{\Omega _R }\bfw_0 \cdot \mathbf{u} \, d\bfx \, = \, 
- \frac{1}{k^2 } \, \sum _{K\in\mathcal{T}_h }
  \int_{ \partial K } \nabla\cdot \bfw _0 \, \mathbf{u}_0 \cdot\bfnu \, dS \, = 
  \\ \hspace*{1cm}
= \, - \displaystyle \frac{1}{k^2 } \left( 
\int _{\mathcal{F}^I_h }  \jmp{ \nabla\cdot \mathbf{w}_0}  _{ \bfnu  } \, \mathbf{u}_0 \, dS 
+ \int _{\mathcal{F}^R_h } \nabla\cdot \bfw_0 \, \bfnu  \cdot \mathbf{u}_0  \, dS 
- \int _{\mathcal{F}^{\Gamma}_h } \nabla\cdot \bfw_0 \, \bfnu  \cdot \mathbf{u}_0  \, dS 
\right) \, = 
\\ \hspace*{1cm}
= - \displaystyle \frac{1}{k^2 } \,  
\int _{\mathcal{F}^I_h } \jmp{ \nabla\cdot \bfw _0 } _{ \bfnu  } \bfnu  \, \mathbf{u} _0 \cdot \bfnu  \, dS \, .  
  \end{array}
$$
Therefore, by Cauchy-Schwartz inequality,
$$
|\int_{\Omega _R }\bfw _0 \cdot \mathbf{u}  \, d\bfx | \, \leq \, 
 \displaystyle \frac{1}{k^2 } \,  
 \Big( \int _{\mathcal{F}^I_h } \frac{\alpha _2}{k} \, |\jmp{ \nabla\cdot \bfw _0 } _{ \bfnu  } |^2 \, dS \Big) ^{1/2}
\,
 \Big( \int _{\mathcal{F}^I_h } \frac{k}{\alpha _2 \, } \, |\mathbf{u} _0 \cdot \bfnu |^2 \, dS 
\Big) ^{1/2}
\, ,
$$
and we have
$$
|\int_{\Omega _R }\bfw _0 \cdot\mathbf{u}  \, d\bfx | \, \leq \, 
 \displaystyle \frac{1}{\sqrt{ k^3\, \underline{\alpha}_2 }} \,  
 \Vert \bfw \Vert _{DG  } 
\,
 \Big( \sum _{K\in\mathcal{T}_h } \int_{ \partial K }  | \mathbf{u}_0 \cdot \bfnu  |^2 \, dS 
\Big) ^{1/2}
\, .
$$
Let us notice that, taking the $\mathrm{curl}$ of (\ref{helm1bis}), we know that $\mathrm{curl}\mathbf{u}_0=\mathrm{curl}\mathbf{u}\in L^2(\Omega_R)$; this allows us to use again the trace inequality \cite[eq. (24)]{HMPhp} for $\mathbf{u}_0\in H_0(\mathrm{div}^0;\Omega _R)\cap H (\mathrm{curl};\Omega _R)\hookrightarrow H^{1/2+s} (\Omega _R)$ for any $s\in [0,1/2)$:
$$
\Vert \mathbf{u}_0 \cdot \bfnu  \Vert ^2_{0,\partial K} \,\leq \, \Vert \mathbf{u}_0  \Vert ^2_{0,\partial K} \,\leq\, C \left( \frac{1}{q.u.(\mathcal{T}_h ) \, h} \, \Vert \mathbf{u}_0\Vert _{0,K}^2 + h^{2s}\, | \mathbf{u}_0 |_{1/2+s,K}^2\right) .
$$
Then, summing over $K\in\mathcal{T}_h$ and making use of the continuity of the embedding $H_0(\mathrm{div}^0;\Omega _R)\cap H (\mathrm{curl};\Omega _R)\hookrightarrow H^{1/2+s} (\Omega _R)$, we deduce
$$
\begin{array}{l}
 \displaystyle\sum _{K\in\mathcal{T}_h } \Vert \mathbf{u}_0\cdot \bfnu  \Vert ^2_{0,\partial K} 
 \,\leq\, C \left( \frac{1}{q.u.(\mathcal{T}_h ) \, h} \, \Vert  \mathbf{u}_0\Vert _{0,\Omega _R }^2 + h^{2s}\, | \mathbf{u}_0 |_{1/2+s,\Omega _R }^2\right) \,\leq\\
\hspace*{2cm} \displaystyle \,\leq\, C \left( \frac{1}{q.u.(\mathcal{T}_h ) \, h}  + h^{2s} \right) (\Vert \mathbf{u}_0 \Vert _{0,\Omega _R }^2 + \Vert \mathrm{curl} \mathbf{u}_0 \Vert _{0,\Omega _R }^2 ) \, .
 \end{array}
$$
But recalling the $L^2(\Omega_R)^2$-orthogonality of Helmholtz decomposition (\ref{helm1bis}), 
$$
\Vert \mathbf{u} \Vert _{\mathrm{curl},\Omega_R} ^2 \, = \, 
\Vert \mathbf{u}_0 \Vert _{0,\Omega_R} ^2 + 
\Vert \nabla q \Vert _{0,\Omega_R} ^2 +
\Vert \mathrm{curl}\mathbf{u}_0 \Vert _{0,\Omega_R} ^2
\geq  
\Vert \mathbf{u}_0 \Vert _{0,\Omega_R} ^2 + 
\Vert \mathrm{curl}\mathbf{u}_0 \Vert _{0,\Omega_R} ^2
\, ,
$$
so that
$$
 \displaystyle\sum _{K\in\mathcal{T}_h } \Vert \mathbf{u}_0\cdot\bfnu \Vert ^2_{0,\partial K} 
\,\leq\, C \left( \frac{1}{q.u.(\mathcal{T}_h ) \, h}  + h^{2s} \right) \Vert \mathbf{u}\Vert _{\mathrm{curl},\Omega_R} ^2 \, .
$$
Therefore
$$
|\int_{\Omega _R }\bfw _0 \cdot \mathbf{u} \, d\bfx | \, \leq \, 
 \frac{C \, ( ( q.u.(\mathcal{T}_h ) \, h )^{-1} + h^{2s} ) ^{1/2} }{\sqrt{k^3\, \underline{\alpha}_2}} \,  
 \Vert \bfw \Vert _{DG  } 
\, 
\Vert \mathbf{u}\Vert _{\mathrm{curl},\Omega_R}  \, ,
$$
and we conclude that
$$
\Vert \bfw _0  \Vert _{ H(\mathrm{curl};\Omega _R)' }  \, \leq \, 
 \frac{C \, ( ( q.u.(\mathcal{T}_h ) \, h )^{-1} + h^{2s} ) ^{1/2} }{\sqrt{k^3 \, \underline{\alpha}_2} } \,  
 \Vert \bfw \Vert _{DG  }  \, .
$$
We have proved the following lemma:
\begin{lemma}\label{lem2} For each $s\in (0,\infty)$, there exists a constant $C$ (depending on $s$ but independent of $h>0$ and $H>0$ small enough) such that
$$
\Vert \mathbf{w} _0  \Vert _{ H(\mathrm{curl};\Omega _R)' }  \, \leq \, C h^{-1/2} \Vert \mathbf{w} \Vert _{DG  }  \, ,
$$
for all $\mathbf{w}\in\mathbf{W}^s (\mathcal{T}_h)$ and $\mathbf{w}_0\in H_0(\mathrm{div}^0;\Omega_R)$ satisfying (\ref{helm1}).
\end{lemma}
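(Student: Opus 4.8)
The plan is to bound $\Vert\mathbf{w}_0\Vert_{H(\mathrm{curl};\Omega_R)'}$ by duality. For an arbitrary test field $\mathbf{u}\in\Hcurl$, I would take its $L^2(\Omega_R)^2$-orthogonal Helmholtz decomposition $\mathbf{u}=\mathbf{u}_0+\nabla q$ with $\mathbf{u}_0\in H_0(\mathrm{div}^0;\Omega_R)$ and $q\in H^1(\Omega_R)$, and use that $\mathbf{w}_0$ is $L^2$-orthogonal to gradients to replace $\int_{\Omega_R}\mathbf{w}_0\cdot\mathbf{u}\,d\bfx$ by $\int_{\Omega_R}\mathbf{w}_0\cdot\mathbf{u}_0\,d\bfx$. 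The key observation is then the elementwise Trefftz identity $\mathbf{w}=-k^{-2}\nabla\ddiv\mathbf{w}$ on each $K\in\mathcal{T}_h$ (together with $\int_{\Omega_R}\nabla p\cdot\mathbf{u}_0\,d\bfx=0$), which, after integrating by parts on each element and using that $\ddiv\mathbf{u}_0=0$ in $\Omega_R$ and $\mathbf{u}_0\cdot\bfnu=0$ on $\partial\Omega_R=\Gamma\cup\Sigma_R$ (so every volume term and every boundary-edge term vanishes), collapses to a pure interior-skeleton expression
\[
\int_{\Omega_R}\mathbf{w}_0\cdot\mathbf{u}\,d\bfx=-\frac{1}{k^2}\int_{\mathcal{F}_h^I}(\jmp{\ddiv\mathbf{w}}_{\bfnu}\cdot\bfnu)\,(\mathbf{u}_0\cdot\bfnu)\,dS .
\]

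From here a weighted Cauchy--Schwarz inequality, splitting the integrand with the weights $\sqrt{\alpha_2/k}$ and $\sqrt{k/\alpha_2}$, together with the lower bound $(\ref{DGnormbound})$ for $\Vert\cdot\Vert_{DG}$, gives
\[
\Big|\int_{\Omega_R}\mathbf{w}_0\cdot\mathbf{u}\,d\bfx\Big|\;\le\;\frac{1}{\sqrt{k^3\,\underline{\alpha}_2}}\,\Vert\mathbf{w}\Vert_{DG}\,\Big(\sum_{K\in\mathcal{T}_h}\Vert\mathbf{u}_0\cdot\bfnu\Vert_{0,\partial K}^2\Big)^{1/2},
\]
with $\underline{\alpha}_2=\inf_{\mathcal{F}_h^I}\alpha_2$. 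The last factor is handled through the regularity of $\mathbf{u}_0$: taking the $\mathrm{curl}$ of the decomposition shows $\mathrm{curl}\,\mathbf{u}_0=\mathrm{curl}\,\mathbf{u}\in L^2(\Omega_R)$, whence $\mathbf{u}_0\in H_0(\mathrm{div}^0;\Omega_R)\cap H(\mathrm{curl};\Omega_R)\hookrightarrow H^{1/2+s}(\Omega_R)^2$ for every $s\in(0,1/2)$ with $\Vert\mathbf{u}_0\Vert_{1/2+s,\Omega_R}\le C\Vert\mathbf{u}\Vert_{\mathrm{curl},\Omega_R}$ (using the $L^2$-orthogonality of the decomposition). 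Applying the scaled trace inequality \cite[eq.~(24)]{HMPhp} on each $K$, summing, and using $q.u.(\mathcal{T}_h)\,h\le h_K\le h$, I obtain $\sum_{K}\Vert\mathbf{u}_0\cdot\bfnu\Vert_{0,\partial K}^2\le C\big((q.u.(\mathcal{T}_h)\,h)^{-1}+h^{2s}\big)\Vert\mathbf{u}\Vert_{\mathrm{curl},\Omega_R}^2$.

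Combining these and taking the supremum over $\mathbf{u}$ yields $\Vert\mathbf{w}_0\Vert_{H(\mathrm{curl};\Omega_R)'}\le C\big((q.u.(\mathcal{T}_h)\,h)^{-1}+h^{2s}\big)^{1/2}\Vert\mathbf{w}\Vert_{DG}$. Since $k$ and $\underline{\alpha}_2$ are fixed, $q.u.(\mathcal{T}_h)\ge1$, and $h$ is bounded above by $\mathrm{diam}(\Omega_R)$ so that $h^{2s}\le Ch^{-1}$ for $s>0$, the right-hand side reduces to $Ch^{-1/2}\Vert\mathbf{w}\Vert_{DG}$. The case $s\ge1/2$ then follows at once, because $\mathbf{W}^s(\mathcal{T}_h)\subseteq\mathbf{W}^{s'}(\mathcal{T}_h)$ for any $s'\in(0,1/2)$, so one simply applies the estimate just established with such an $s'$.

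The step I expect to be the main obstacle is the mesh-explicit trace bound for $\mathbf{u}_0$: one must use both the embedding $H_0(\mathrm{div}^0;\Omega_R)\cap H(\mathrm{curl};\Omega_R)\hookrightarrow H^{1/2+s}(\Omega_R)^2$ with a constant independent of $\mathbf{u}$, and the fact that the scaled trace inequality holds uniformly across the shape-regular, quasi-uniform family $\{\mathcal{T}_h\}$. Everything else is the same bookkeeping with the $\Vert\cdot\Vert_{DG}$-weights that was carried out for the companion estimate leading to Lemma~\ref{lem1}.
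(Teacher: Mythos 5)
Your proposal is correct and follows essentially the same route as the paper's own argument: the same duality setup with the Helmholtz decomposition of the test field, the elementwise Trefftz identity plus integration by parts to reduce everything to interior-skeleton jump terms, the weighted Cauchy--Schwarz bound against $\Vert\cdot\Vert_{DG}$, and the scaled trace inequality combined with the embedding $H_0(\mathrm{div}^0;\Omega_R)\cap H(\mathrm{curl};\Omega_R)\hookrightarrow H^{1/2+s}(\Omega_R)^2$. The only (harmless) additions are your explicit remarks that $h^{2s}\le Ch^{-1}$ and that the case $s\ge 1/2$ reduces to $s'\in(0,1/2)$, which the paper leaves implicit.
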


Using Lemmas ~\ref{lem1} and \ref{lem2} we have then proved the following theorem that summarizes our results from this section.
 \begin{theorem}\label{th-conv_Hh2H} For all sufficiently small $H$ and for each $s>0$, there is a constant $C$ depending on $s$ but independent of $h$, $H$, $\mathbf{v}^H$ and $\mathbf{v}_h^H$ such that
   \[
   \Vert \mathbf{v}^H-\mathbf{v}^H_h\Vert_{H(\mathrm{curl},\Omega_R)'}\,\leq\,  C\,H^{-s}\, (h^{-1/2}+ h^{s}\, H^{-s})\, \inf_{\mathbf{w}_h\in\mathbf{W}_h} \Vert \mathbf{v}-\mathbf{w}_h\Vert_{DG^+} \, .
   \]
 \end{theorem}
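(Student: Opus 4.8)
The plan is to split the error by the $L^2(\Omega_R)^2$-orthogonal Helmholtz decomposition, bound its two components in terms of $\Vert\cdot\Vert_{DG}$ using Lemmas \ref{lem1} and \ref{lem2}, and then feed in the quasi-optimal $DG$-norm estimate of Theorem \ref{TheoremQuasiOptimal}.

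First I would check that the error $\mathbf{e}:=\mathbf{v}^H-\mathbf{v}_h^H$ belongs to $\mathbf{W}^s(\mathcal{T}_h)$, so that Lemmas \ref{lem1} and \ref{lem2} apply. Indeed $\mathbf{v}_h^H\in\mathbf{W}_h\subset\mathbf{W}^s(\mathcal{T}_h)$ by the definition of the Trefftz space, while the last lemma of Section \ref{semi} shows that $\mathbf{v}^H\in H^{1/2+s}(\Omega_R)^2$ with $\nabla\cdot\mathbf{v}^H\in H^1(\Omega_R)$; hence $\mathbf{v}^H|_K\in H^{1/2+s}(\mathrm{div};K)$ and $\mathbf{v}^H$ solves $\nabla\nabla\cdot\mathbf{v}^H+k^2\mathbf{v}^H=0$ in each $K\in\mathcal{T}_h$. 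Consequently $\mathbf{e}$ admits a decomposition $\mathbf{e}=\mathbf{e}_0+\nabla p$ with $\mathbf{e}_0\in H_0(\mathrm{div}^0;\Omega_R)$ and $p\in H^1(\Omega_R)$ exactly as in (\ref{helm1}).

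Next I would use that $H(\mathrm{curl};\Omega_R)$ embeds into $L^2(\Omega_R)^2$ with embedding constant $1$, so that, dualizing, $\Vert\nabla p\Vert_{H(\mathrm{curl};\Omega_R)'}\le\Vert\nabla p\Vert_{0,\Omega_R}$; together with the triangle inequality this gives
\[
\Vert\mathbf{e}\Vert_{H(\mathrm{curl};\Omega_R)'}\le\Vert\mathbf{e}_0\Vert_{H(\mathrm{curl};\Omega_R)'}+\Vert\nabla p\Vert_{0,\Omega_R}.
\]
Applying Lemma \ref{lem2} to $\mathbf{e}_0$ and Lemma \ref{lem1} to $\nabla p$, and noting that for $H\le1$ one has $H^{-s}\ge1$, so that the $C\,h^{-1/2}$ bound of Lemma \ref{lem2} is absorbed into $C\,H^{-s}(h^{-1/2}+h^sH^{-s})$, I obtain
\[
\Vert\mathbf{e}\Vert_{H(\mathrm{curl};\Omega_R)'}\le C\,H^{-s}\,(h^{-1/2}+h^sH^{-s})\,\Vert\mathbf{e}\Vert_{DG}.
\]
Finally, Theorem \ref{TheoremQuasiOptimal} yields $\Vert\mathbf{e}\Vert_{DG}=\Vert\mathbf{v}^H-\mathbf{v}_h^H\Vert_{DG}\le2\inf_{\mathbf{w}_h\in\mathbf{W}_h}\Vert\mathbf{v}-\mathbf{w}_h\Vert_{DG^+}$ (where, as in that theorem, $\mathbf{v}$ stands for the semidiscrete solution of (\ref{prob-FwdProbV_HM})); substituting this into the previous display gives the claim, with a constant $C$ depending only on $s$ and the fixed data ($k$, $\Omega_R$, the shape-regularity and quasi-uniformity of the meshes, the uniform lower bounds on $\alpha_1,\alpha_2,\delta,\tau$), but not on $h$, $H$, $\mathbf{v}^H$ or $\mathbf{v}_h^H$.

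The only step requiring genuine care is the membership $\mathbf{e}\in\mathbf{W}^s(\mathcal{T}_h)$, which rests on the elliptic-regularity estimate for $\mathbf{v}^H$ proved at the end of Section \ref{semi}; everything else is the triangle inequality, the two duality lemmas, and the bookkeeping of powers of $h$ and $H$. There is no new functional-analytic obstacle here, since the difficult parts (the duality arguments and the trace inequality of \cite{HMPhp}) were already carried out in Lemmas \ref{lem1} and \ref{lem2}.
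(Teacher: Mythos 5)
Your proposal is correct and follows essentially the same route as the paper: Helmholtz-decompose the error $\mathbf{v}^H-\mathbf{v}_h^H$ as in (\ref{helm1}), apply the triangle inequality, invoke Lemmas \ref{lem1} and \ref{lem2} to control both pieces by $\Vert\mathbf{v}^H-\mathbf{v}_h^H\Vert_{DG}$, and finish with the quasi-optimality estimate of Theorem \ref{TheoremQuasiOptimal}. Your explicit verification that $\mathbf{v}^H-\mathbf{v}_h^H\in\mathbf{W}^s(\mathcal{T}_h)$ via the regularity lemma at the end of Section \ref{semi} is a detail the paper's very terse proof leaves implicit, but it is the right justification.
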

 \begin{remark} We could avoid the growing factor $H^{-s}$ if we used $\mathcal{C}^1$  
functions to compute $G_\Gamma^H$. Note that using a trigonometric basis is one case where $H^{-s}$ can be removed. In the general case, the $H^{-s}$ terms are not unexpected. Also notice that we can choose a fine mesh on the boundary $\Gamma$ and then we still can approximate the scattering problem with sufficient accuracy. 

\begin{proof}
 Let $\bfw=\bfv^H-\bfv_h^H$ in (\ref{helm1}). Then
    \[
    \Vert \bfw^H-\bfv_h^H\Vert_{H(\mathrm{curl};\Omega_R)'}\leq
    \Vert \bfw_0\Vert_{H(\mathrm{curl};\Omega_R)'}+
    \Vert \nabla p\Vert_{H(\mathrm{curl};\Omega_R)'}.
    \]
  Use of Lemmas \ref{lem1} and \ref{lem2} completes the proof.
\end{proof}

Using approximation results from \cite{HMP_approx} this theorem could
be converted into order estimates.  Because of the poor regularity of
$\mathbf{v}^H$ (due to the reduced regularity imposed by the
regularity of $G_\Gamma^H$) we expect to need a refined $h$ grid near
the boundary unless we use smooth basis functions on $\Gamma$.
 \end{remark}
 
Our final result combines Theorem
\ref{lemma-E!sol:semidiscrGIBCfwdprob_vec} and Theorem
\ref{th-conv_Hh2H} to give an error estimate for the fully discrete
problem.
\begin{corollary} Under the conditions of Theorem~\ref{th-conv_Hh2H}, there is a constant $C$ (depending on $s\in (0,\infty)$ but independent of $h$, and $H$ small enough, $\mathbf{v}^H$ and $\mathbf{v}_h^H$) such that
   \[
\begin{array}{rl}
\displaystyle  \Vert \mathbf{v}-\mathbf{v}^H_h\Vert_{H(\mathrm{curl};\Omega _R)'}\, \leq\,  &
\displaystyle C H^{-s}\, (h^{-1/2}+h^{s}H^{-s}) \, \Big( \inf_{\mathbf{w}_h\in\mathbf{W}_h} \Vert \mathbf{v}-\mathbf{w}_h\Vert_{DG^+}+\Big. \\[1ex]
& \displaystyle \qquad \Big. \Vert (G_\Gamma^H-G_\Gamma)(\mathbf{v}\!\cdot\!\bfnu)\Vert_{H^{1/2}(\Gamma)}
   +\Vert (G_\Gamma^H-G_\Gamma)g\Vert_{H^{1/2}(\Gamma)}\Big) .
\end{array}
   \]
 \end{corollary}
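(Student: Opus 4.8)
The plan is to split the total error by a triangle inequality through the semidiscrete solution $\mathbf{v}^H$ of (\ref{prob-FwdProbV_HM}),
\[
\Vert\mathbf{v}-\mathbf{v}^H_h\Vert_{H(\mathrm{curl};\Omega_R)'}\,\leq\,\Vert\mathbf{v}-\mathbf{v}^H\Vert_{H(\mathrm{curl};\Omega_R)'}+\Vert\mathbf{v}^H-\mathbf{v}^H_h\Vert_{H(\mathrm{curl};\Omega_R)'}\,,
\]
and then bound the two summands separately: the first by the semidiscrete estimate of Theorem~\ref{lemma-E!sol:semidiscrGIBCfwdprob_vec}, and the second by the Trefftz error estimate of Theorem~\ref{th-conv_Hh2H}.

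First I would treat the second summand, which is immediate: Theorem~\ref{th-conv_Hh2H} gives
\[
\Vert\mathbf{v}^H-\mathbf{v}^H_h\Vert_{H(\mathrm{curl};\Omega_R)'}\,\leq\,CH^{-s}(h^{-1/2}+h^sH^{-s})\inf_{\mathbf{w}_h\in\mathbf{W}_h}\Vert\mathbf{v}-\mathbf{w}_h\Vert_{DG^+}\,,
\]
which is precisely the first term in the bracket of the claimed bound. For the first summand I would pass from the dual graph norm to an $H(\mathrm{div})$-norm using the continuous dense embeddings $H(\mathrm{div};\Omega_R)\hookrightarrow L^2(\Omega_R)^2\hookrightarrow H(\mathrm{curl};\Omega_R)'$, the last being the duality identification with pivot space $L^2(\Omega_R)^2$; this yields $\Vert\mathbf{v}-\mathbf{v}^H\Vert_{H(\mathrm{curl};\Omega_R)'}\leq C\Vert\mathbf{v}-\mathbf{v}^H\Vert_{H(\mathrm{div};\Omega_R)}$. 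Then the estimate (\ref{eq-boundsemidiscr}) of Theorem~\ref{lemma-E!sol:semidiscrGIBCfwdprob_vec} gives
\[
\Vert\mathbf{v}-\mathbf{v}^H\Vert_{H(\mathrm{div};\Omega_R)}\,\leq\,C\big(\Vert(G^H_\Gamma-G_\Gamma)(\mathbf{v}\cdot\bfnu)\Vert_{H^{1/2}(\Gamma)}+\Vert(G^H_\Gamma-G_\Gamma)g\Vert_{H^{1/2}(\Gamma)}\big)\,.
\]
Since $h$ and $H$ are small, $H^{-s}(h^{-1/2}+h^sH^{-s})\geq H^{-s}h^{-1/2}\geq1$, so this fixed constant is absorbed into the prefactor of the statement, producing the remaining two terms in the bracket. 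Adding the two bounds completes the proof.

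I do not expect a genuine obstacle here: the corollary is an assembly of the two error estimates already established, for the semidiscrete problem and for the Trefftz discretization. The only mildly delicate step is the norm comparison $\Vert\cdot\Vert_{H(\mathrm{curl};\Omega_R)'}\leq C\Vert\cdot\Vert_{H(\mathrm{div};\Omega_R)}$, which is standard once one recalls the Gelfand-triple structure with pivot $L^2(\Omega_R)^2$. One should also keep track of the harmless bookkeeping that the several occurrences of ``$\mathbf{v}$'' — as the exact continuous solution, as the argument of $G^H_\Gamma-G_\Gamma$, and inside the $DG^+$-infimum — all refer to the same solution of (\ref{prob-FwdProbV}), so that no further triangle inequality against $\mathbf{v}^H$ is required; should one nonetheless wish to replace $\mathbf{v}$ by $\mathbf{v}^H$ inside the $DG^+$-infimum (as in Theorem~\ref{TheoremQuasiOptimal}), the correction $\Vert\mathbf{v}-\mathbf{v}^H\Vert_{DG^+}$ is controlled by the element-wise trace inequality \cite[eq.~(24)]{HMPhp} together with the regularity results of Section~\ref{semi} and (\ref{eq-boundsemidiscr}), and is of the same order as the terms already present.
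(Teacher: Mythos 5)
Your proof is correct and takes essentially the same route as the paper's: a triangle inequality through $\mathbf{v}^H$, with the semidiscrete error controlled by (\ref{eq-boundsemidiscr}) in the $\Hdiv$-norm (which dominates the $H(\mathrm{curl};\Omega_R)'$-norm) and the TDG error controlled by Theorem~\ref{th-conv_Hh2H}. The details you add --- the Gelfand-triple norm comparison and the absorption of the semidiscrete terms into the prefactor $H^{-s}(h^{-1/2}+h^sH^{-s})\geq 1$ --- are precisely the steps the paper leaves implicit.
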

 
 \begin{proof}
We have already shown in {Theorem \ref{lemma-E!sol:semidiscrGIBCfwdprob_vec}} 
that $\bfv^H$ converges to $\bfv$ in $\Hdiv$, and this is a stronger norm than $H(\mathrm{curl}; \Omega_R)'$. Besides, we have shown in Theorem \ref{th-conv_Hh2H} that $\bfv^H_h$ converges to $\bfv^H$ in $H(\mathrm{curl}; \Omega_R)'$. Combining those two results, we conclude the statement.
   \end{proof}

 \section{Numerical Results}\label{num}
%
We now present some limited and preliminary numerical results that illustrate the foregoing error analysis.  We consider scattering by
a circular disk and use absorbing boundary conditions (ABCs) on the exterior.  This is not exactly what is analyzed in the foregoing sections, \pmr{but ABCs give rise to GIBC and are an important application of our method. Now there is no Neumann-to-Dirichlet
map and the GIBC is provided by various Absorbing Boundary Conditions. However, the same error estimates hold in this case, and the use of higher order ABCs is of considerable practical use. Our results are obtained using a modified version of LehrFEM~\cite{lehrfem}.  We make the choice $\alpha_1=\alpha_2=\tau=1/2$ so that we are actually using the 
Ultra Weak Variational formulation.  The wave-number  is $k=8$ and we use a uniform number of directions on the elements
with $p_K=7$ for all elements $K$.}

In particular, we consider scattering of a plane wave incident field $u^i(r,\theta) = \exp (ikr\cos\theta)$ from a circular scatterer of radius $a$.
We impose the Dirichlet boundary condition $u(a,\theta)=-\exp (ika\cos\theta)$ on the boundary of the scatterer, and ABCs 
of the form:
 \[
  \alpha u + \beta \Delta_{1}u  = -\frac{\partial u}{\partial r}\;\;\mbox{on}\;\;\Sigma_{\!_R}.
 \]
where $\Delta_{1} =\frac{1}{R^2}\partial^2_\theta$ is the Laplace-Beltrami operator on the circle $\Sigma_{\!_R}$.  This is obviously a special case of the GIBC (second equation in (\ref{vbasic})) albeit on the outer rather than inner boundary.

These boundary conditions are given in \cite{kangfeng} as approximations of the Dirichlet-to-Neumann map, for the following choices of $\alpha$ and $\beta$:
\begin{equation*}
\begin{aligned}
 & \mbox{ABC0:}\;\;\alpha = ik,\;\;\beta = 0,\;\;\\
 & \mbox{ABC1:}\;\;\alpha = \left(ik+\frac{1}{2R}\right),\;\;\beta = 0,\\
  & \mbox{ABC2:}\;\;\alpha = \left(ik+\frac{1}{2R}+\frac{i}{8kR^2}\right),\;\;\beta = \frac{i}{2kR^2},\\
  & \mbox{ABC3:}\;\;\alpha = \left(ik+\frac{1}{2R}+\frac{i}{8kR^2}-\frac{1}{8k^2R^3}\right),\;\;\beta = \left(\frac{i}{2kR^2}-\frac{1}{2k^2R^3}\right).
 \end{aligned}
\end{equation*}
\pmr{The above ABCs can then be translated into GIBCs for the displacement problem as outlined in Section 1, and the 
results can then be translated back to the field $u$ using the Trefftz property so that $u=-\nabla\cdot\bfv/k^2$.}

An advantage of the simple circular scatterer is that we can write down the  exact solution for scattering of a plane wave incident field with an ABC on $\Sigma_{\!_R}$. In particular: 
\[
 u(r,\theta)=\sum_{n=-\infty}^\infty\left[ a_n H_n^{(1)}(kr) + b_n H_n^{(2)}(kr)\right]\exp(in\theta),
\]
where the coefficients $a_n, b_n$ are solutions of the linear system:
\[ \left( \begin{array}{ccc}
\gamma H_n^{(1)}(kR)+kH_n^{(1)'}(kR) &\hspace{.2cm}  & \gamma H_n^{(2)}(kR)+kH_n^{(2)'}(kR)\\
 &  & \\
H_n^{(1)}(ka)&  & H_n^{(2)}(ka)\end{array}\right)\left(\begin{array}{c}a_n \\ b_n\end{array}\right)=\left(\begin{array}{c}0 \\ -i^nJ_n(ka)\end{array}\right)\] 
and $\gamma  = \alpha - \beta n^2/R^2$.  \pmr{This then gives a series solution for $\bfv$ that solves (\ref{vbasic}).}

In Fig.~\ref{fig:abcntd1} we show field plots for the solution of the approximate scattering problem in 7 cases, together with the exact solution of the full scattering problem. For this experiment we choose $k=8$ giving a wavelength
$\lambda\approx 0.78$.  The mesh size requested from the mesh generator is $h=0.1$.  The outer boundary is distance 0.64 wavelengths from the scatterer which is rather close. Indeed the top left and top right panels show considerable distortion compared to the lower right figure showing the exact solution.  Using ABC2 and ABC3 gives better fidelity.  In the bottom left panel we
show the solution computed using a discretized NtD map, which shows the best accuracy.

\begin{figure}
\begin{center}
\resizebox{0.40\textwidth}{!}{\includegraphics{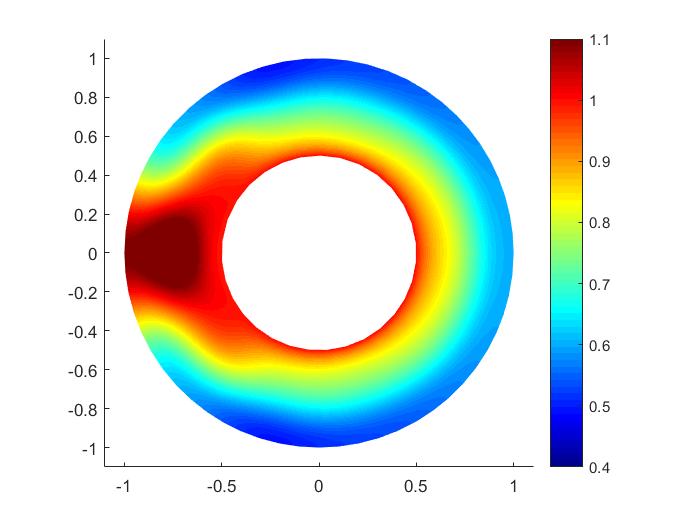}}
\resizebox{0.40\textwidth}{!}{\includegraphics{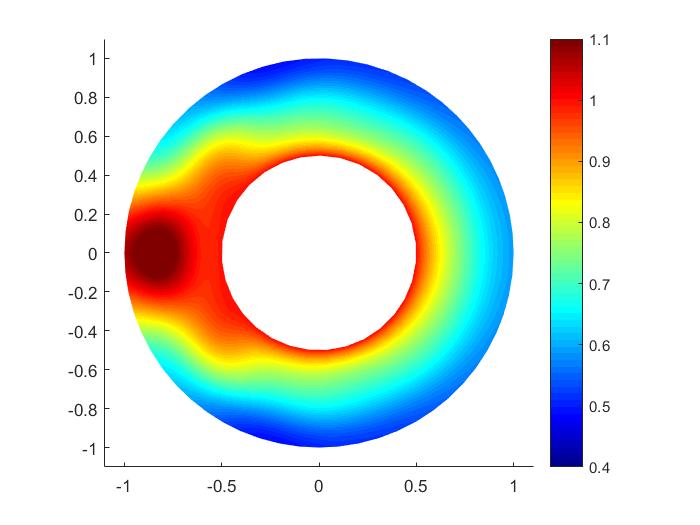}}
\resizebox{0.40\textwidth}{!}{\includegraphics{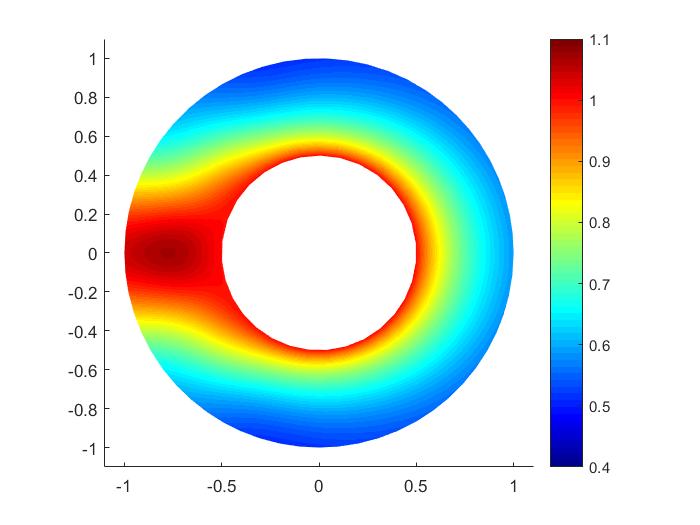}}
\resizebox{0.40\textwidth}{!}{\includegraphics{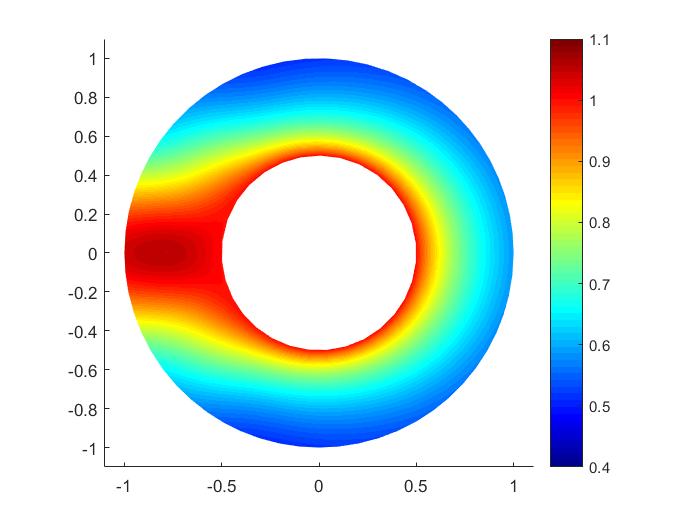}}
\resizebox{0.40\textwidth}{!}{\includegraphics{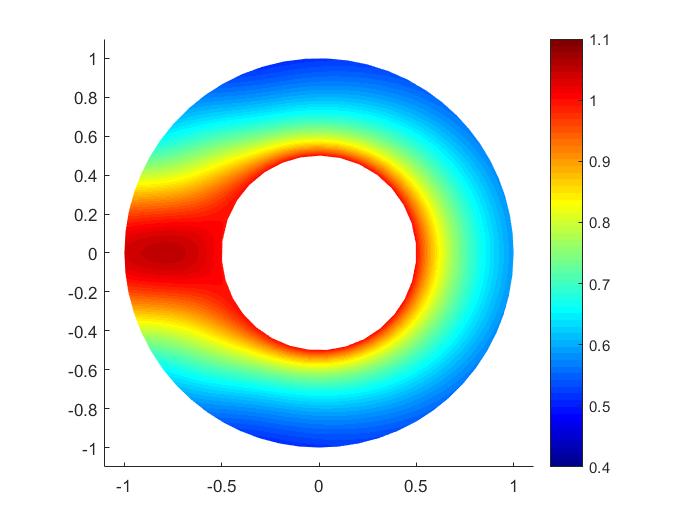}}
\resizebox{0.40\textwidth}{!}{\includegraphics{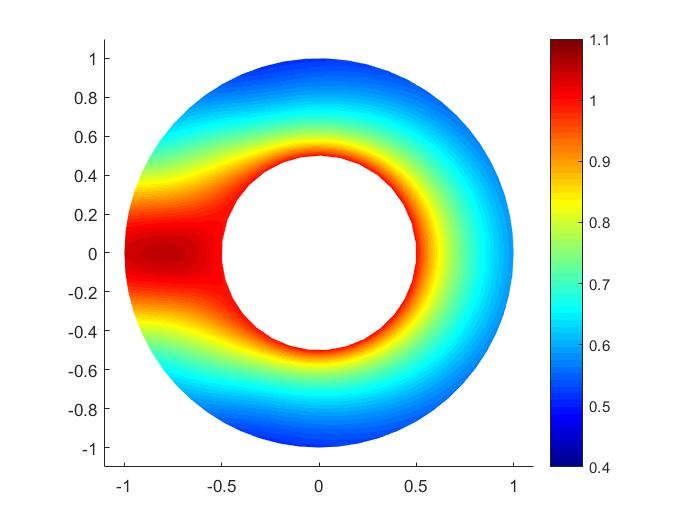}}
\caption{Moduli of the fields scattered from a disk: $a=0.5$, $R=1$, $p=7$ plane waves per element, $k=8$. Top left: ABC0. Top right: ABC1. Middle left: ABC2. Middle right: ABC3. Bottom left: NtD. Bottom right: Exact solution. For these results $h=0.1$ is fixed and 13 Fourier modes are used for the ABC and NtD models.  The exact solution uses 40 modes.}
\label{fig:abcntd1}
\end{center}
\end{figure}

To investigate convergence in a more quantitative way, in Fig.~\ref{fig:abcntd2} we show the relative $L^2$ error 
on the domain as a function of $1/h$.  In the left hand panel we verify that no matter which ABC is used, the UWVF solution with an ABC or the NtD boundary condition converges optimally to the exact solution for the
particular ABC (essentially the result from Theorem \ref{TheoremQuasiOptimal}).  Then in the right hand panel we compare the UWVF with ABC boundary condition to the true exact solution of the full scattering problem.  As can be seen ABC0 and ABC1 result in a poor relative error and do not benefit from mesh refinement (the absorbing boundary is too close to the scatterer and all error is related to the ABC not the UWVF).  For ABC2 and ABC3 the solution does converge with $h$ until the error from the ABC dominates.  It is clear that in this case ABC3 can be used to obtain a solution with better
than 1\% error even with the close absorbing boundary.  The discrete NtD solution continues to converge for all $h$ in our study and would be preferred in this case (but ABCs can be used on non-circular absorbing boundaries and so are of practical interest).
\begin{figure}
\begin{center}
\resizebox{0.425\textwidth}{!}{\includegraphics{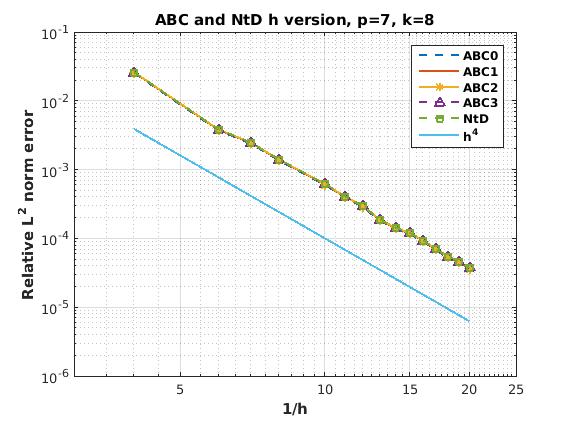}}
\resizebox{0.40\textwidth}{!}{\includegraphics{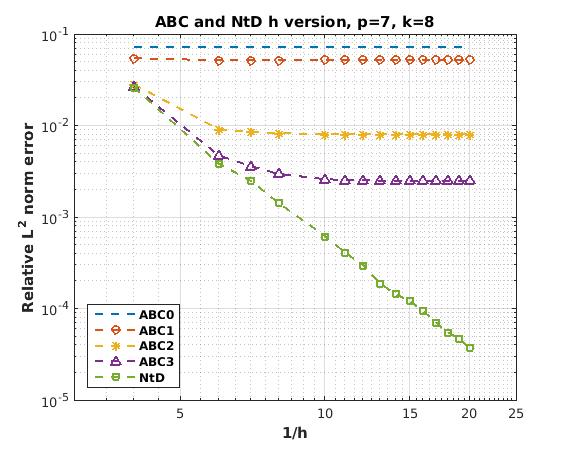}}
\caption{Relative $L^2(\Omega_R)$-norm error vs. $1/h$, for different ABCs on $\Sigma_{\!_R}$.
Left: Errors are computed against the exact solution for the particular ABCs. Right: Errors are computed against the exact solution of the scattering problem.}
\label{fig:abcntd2}
\end{center}
\end{figure}

\section{Conclusion}\label{concl}
We have provided an error analysis of the UWVF discretization of the Helmholtz equation in the presence of a Generalized Impedance Boundary Conditions.  The error analysis is backed by limited numerical experiments.

Clearly several extensions and further numerical tests need to be performed.  In particular our analysis and numerical tests are
for a smooth boundary.  Analysis for a non-smooth boundary, and appropriate mesh refinement strategies near corners
need to be developed.

\section*{Acknowledgements}  The research of Peter Monk is partially supported by NSF grant number DMS-1619904. The research of Virginia Selgas is partially supported by MCI project number MTM2013-43671-P. Peter Monk and Shelvean Kapita acknowledge 
the support of the IMA, University of Minnesota during the special
year ``Mathematics and Optics".


\begin{thebibliography}{30}

\bibitem{Stegun}
M. Abramowitz and I.A. Stegun, \emph{Handbook of Mathematical Functions, Applied Mathematics Series}, 55. National Bureau of Standards, U.S. Department of
Commerce, 1972.

\bibitem{PUFEM}
Babu\v{s}ka I, Melenk JM. \emph{The partition of unity method}. International Journal for Numerical Methods in
Engineering, 40, pp. 727-758, 1997.

\bibitem{bar}
H. Barucq, A. Bendali, M'B. Fares, V. Mattesi, S. Tordeux, A Symmetric Trefftz-DG formulation based on a local boundary element method for the solution of the Helmholtz equation, Journal of Computational Physics, Elsevier, 2016, Volume 330, 1 February 2017, Pages 1069-1092

\bibitem{BCHprevious} 
L. Bourgeois, N. Chaulet, H. Haddar, \emph{Identification of generalized impedance boundary conditions: some numerical issues},
Rapport INRIA no.7449, 2010.
  
\bibitem{BCH}
L. Bourgeois, N. Chaulet, H. Haddar, \emph{Stable reconstruction of generalized impedance boundary conditions}, Inverse Problems, 27, pp. 095002, 2011.

\bibitem{brenner} S.C. Brenner, J. Gedicke and L.-Y. Sung, \emph{Hodge decomposition for two-dimensional time harmonic Maxwell's equations: impedance boundary condition}, Mathematical Methods in the Applied Sciences, published online February 2015.
(DOI: 10.1002/mma.3398)

\bibitem{Buffa2005} A. Buffa, \emph{Remarks on the discretization of some noncoercive operator with applications to heterogeneous Maxwell equations}, SIAM J. Numer. Anal. 43, pp.~1-18, 2005.

\bibitem{buf07}  {A. Buffa and P. Monk},
 \emph{Error estimates for the {U}ltra {W}eak {V}ariational
                  {F}ormulation of the {H}elmholtz equation},
  ESAIM: Mathematical Modeling and Numerical Analysis,
  42, pp.~925-940, 2008.
  
\bibitem{cessenat03}
O. Cessenat and B. Despr\'{e}s,
\emph{Using Plane Waves as Base Functions for Solving Time
                  Harmonic Equations with the {U}ltra {W}eak
                  {V}ariational {F}ormulation},
   {Journal of Computational Acoustics}, 11, pp.~227-238, 2003.
 
\bibitem{ColtonCakoni}
F. Cakoni, D. Colton, \emph{Qualitative Methods in Inverse Scattering Theory}, Springer (2006).

\bibitem{ColtonKress}
D. Colton, R. Kress, \emph{Inverse acoustic and electromagnetic scattering theory}, Springer, 3rd Ed (2013).

\bibitem{EngquistNedelec}
B. Engquist, J.C. N\'ed\'elec, \emph{Effective boundary condition for acoustic and electromagnetic scattering in thin layers}, Rapport Interne CMAP, 1993.

\bibitem{EM}
S. Esterhazy, J. M. Melenk, \emph{On Stability of Discretizations of the Helmholtz Equation}, chapter in Numerical Analysis of Multiscale Problems, Volume 83 of the series Lecture Notes in Computational Science and Engineering, pp. 285-324, 2011.

\bibitem{DEM}
Farhat C, Harari I, Hetmaniuk U. \emph{A discontinuous Galerkin method with Lagrange multipliers for the solution of Helmholtz problems in the mid-frequency regime}, Computer Methods in Applied Mechanics and Engineering, 192, pp.~1389-1419, 2003.

\bibitem{gabard}
G. Gabard, \emph{Discontinuous Galerkin methods with plane waves for time-harmonic problems}, J. Comput. Phys., 225, pp. 1961-1984, 2007.

\bibitem{kangfeng} K. Feng, \emph{Asymptotic radiation conditions for reduced wave equation}, J. Comput. Math., 2, pp. 130-138, 1984.

\bibitem{Gander} M.J. Gander and G. Wanner, \emph{From Euler, Ritz, and Galerkin to Modern Computing} SIAM Review, 54, pp.~627-666 (2012).

\bibitem{GiraultRaviart}
V. Girault, P.A. Raviart, \emph{Finite Element Methods for Navier-Stokes Equations: Theory and Algorithms}, Ed. Springer-Verlag, 1986.

\bibitem{git07}
  C. Gittelson, R. Hiptmair and I. Perugia,
  \emph{Plane wave discontinuous {G}alerkin methods},
  {ESAIM: Mathematical Modeling and Numerical Analysis}, 43, pp. 297-331, 2009.

\bibitem{HMP_approx}
  R. Hiptmair and A. Moiola and I. Perugia,
  \emph{ Plane wave discontinuous {G}alerkin methods for the
                  {2D} {H}elmholtz equation: analysis of the
                  $p$-version},
  SIAM J. Numer. Anal., 49, pp. 264-284, 2011.

\bibitem{HMPhp}
R. Hiptmair, A. Moiola, I. Perugia, \emph{Trefftz discontinuous Galerkin methods for acoustic scattering on locally refined meshes}, Applied Numerical Mathematics, 79, pp. 79-91, 2014.

\bibitem{HMP_Maxwell}
  R. Hiptmair, A. Moiola, I. Perugia, \emph{Error analysis of {T}refftz-discontinuous Gerlerkin methods for the time-harmonic Maxwell equations}, Math. Comp., 82, pp. 247-268, 2012.
  
  \bibitem{hof}  C. Hofreither, A non-standard Finite Element Method using Boundary Integral Operators, Ph.D. thesis, J. Kepler University, Linz (2012).

  \bibitem{tsurvey} R. Hiptmair, A. Moiola and I. Perugia, \emph{A Survey of Trefftz Methods for the Helmholtz Equation}, in Barrenechea, G. R., Cangiani, A., Geogoulis, E. H. (Eds.), ``Building Bridges: Connections and Challenges in Modern Approaches to Numerical Partial Differential Equations'', Lecture Notes in Computational Science and Engineering (LNCSE), 114, pp.~ 237-278, Springer (2016).

  \bibitem{shelvean_phd} S. Kapita, \emph{Plane wave discontinuous Galerkin methods for acoustic scattering}, PhD dissertation, University of Delaware, 2016.
  
  \bibitem{shelvean_paper} S. Kapita and P. Monk,
  \emph{A Plane Wave Discontinuous Galerkin Method with a Dirichlet-to-Neumann Boundary Condition for the Scattering Problem in Acoustics},
to appear in Journal of Computational and Applied Mathematics.

\bibitem{Kress77} R. Kress, \emph{Fredholm's alternative for compact nilinear forms in reflexive Banach spaces}, J. Differential Equations, 25, pp.~216-226, 1977.

\bibitem{Kress} R. Kress, \emph{Linear Integral Equations}, Springer, Berlin (2014), 3rd Ed.

\bibitem{koyama} 
{D. Koyama, \emph{Error estimates of the finite element method for the exterior Helmholtz problem with a modified DtN boundary condition}, Journal of Computational and Applied Mathematics, 232(1), pp. 109-121, 2009.
}

\bibitem{lehrfem}
  \emph{LEHRFEM: A 2D Finite Element Toolbox},\\
\verb+www2.math.ethz.ch/education/bachelor/lectures/fs2013/other/n_dgl/serien.html+,
{A}ccessed: 2016-01-05


\bibitem{Schatz} A.H. Schatz, An observation concerning {R}itz-{G}alerkin methods  with indefinite bilinear forms, Math. Comp., 28, pp. 959-962, 1974.
  
\bibitem{Trefftz} E. Trefftz, Ein Gegenstuck zum Ritzschen Verfahren, Proc. 2nd Int. Cong. Appl. Mech. Zurich (1926), 131-137.

\bibitem{Vernhet}
L. Vernhet, \emph{Boundary Element Solution of a Scattering Problem Involving a Generalized Impedance Boundary Condition}, Mathematical Methods in the Applied Sciences
, 22, pp. 587-603, 1999.

\bibitem{wang}
X. Wang and K.J. Bathe, \emph{Displacement/pressure based mixed finite element formulations for acoustic fluid-structure interaction problems}, Int. J. Numer. Meth. Engng., 40, pp. 2001-2017, 1997.
%

\end{thebibliography}
\end{document}